\def\roji{}
\def\colorito{}
\def\rouge{}
\def\colorao{}
\def\azz{}
\def\roj{}
\def\mag{}
\def\kp{}
\def\az{}
\def\azul{}
\def\red{}
\def\rojo{}
\def\green{}
\def\magenta{}
\def\TC{\protect\operatorname{TC}}
\def\cat{\protect\operatorname{cat}}
\def\cone{\protect\operatorname{cl}}
\def\zcl{\protect\operatorname{zcl}}
\def\hdim{\protect\operatorname{hdim}}
\def\conn{\protect\operatorname{conn}}
\def\CW{\protect\operatorname{CW}}
\def\N{\protect\operatorname{N}}
\def\max{\protect\operatorname{max}}
\def\deg{\protect\operatorname{deg}}
\newtheorem{proposition}{Proposition}[section]
\newtheorem{definition}[proposition]{Definition}
\newtheorem{theo}[proposition]{Theorem}
\newtheorem{remark}[proposition]{Remark}
\newtheorem{ejem}[proposition]{Example}
\newtheorem{lema}[proposition]{Lemma}
\newtheorem{corollary}[proposition]{Corollary}
\title{The higher topological \rojo{complexity} of subcomplexes of products of spheres---and related polyhedral product spaces}
\author{Jes\'us Gonz\'alez, B\'arbara Guti\'errez, and Sergey Yuzvinsky}
\date{\today}
\begin{document}

\maketitle

\begin{abstract}
\rojo{We construct ``higher'' motion planners for automated systems whose space of states are homotopy equivalent to a polyhedral product space $Z(K,\{(S^{k_i},\star)\})$, \colorito{e.g.~robot arms with restrictions on the possible combinations of simultaneously moving nodes.} Our construction is shown to be optimal by explicit cohomology calculations. The higher topological complexity of other \colorao{families of} polyhedral product spaces is also determined.}
\end{abstract}

{\it 2010 Mathematics Subject Classification: 55M30 (20F36, 52B70, 52C35, 55U10, 68T40).}

{\it Key words and phrases: Sequential motion planning, Schwarz genus, polyhedral products, zero-divisors.}

\tableofcontents

\section{Introduction}
\rojo{For a positive integer $s\in\mathbb{N}$, the $s$-th (higher or sequential) topological complexity of a path connected space $X$, $\TC_{s}(X)$, is defined in~\cite{Ru10} as the reduced Schwarz genus of the fibration $$e_s=e^X_{s}:X^{J_{s}}\to X^{s}$$ given by $e_{s}(f)=\left(\rojo{f_{1}}(1),\dots,\rojo{f_{s}(1})\right)$. Here $J_{s}$ denotes the wedge of $s$ copies of the closed interval $[0,1]$, in all of which $0\in[0,1]$ is the base point, and we think of an element $f$ in the function space $X^{J_{s}}$ as an $s$-tuple $f=(f_1,\ldots,f_s)$ of paths in $X$ all of which start at a common point. Thus, $\TC_{s}(X)+1$ is the smallest cardinality of open covers $\{U_i\}_i$ of $X^s$ so that, on each $U_i$, $e_s$ admits a section $\sigma_i$. In such a cover, $U_i$ is called a {\it local domain}, the corresponding section $\sigma_i$ is called a {\it local rule}, and the resulting family of pairs $\{(U_i,\sigma_i)\}$ is called a {\it motion planner}. The latter is said to be {\it optimal} if it has $\TC_{s}(X)+1$ local domains.}

\medskip\rojo{For practical purposes, the openness condition on local domains can be replaced (without altering the resulting numeric value of $\TC_{\colorito{s}}(X)$) by the requirement that local domains \colorito{are} pairwise disjoint Euclidean neighborhood retracts (ENR).}

\medskip
\rojo{Since} $e_s$ is the \rojo{standard} fibrational substitute of the diagonal \rojo{inclusion} $$d_s=d^{X}_{s}: X\hookrightarrow X^{s},$$ $\TC_{s}(X)$ coincides with the reduced Schwarz genus of \rojo{$d_{s}$. This suggests part (a) in the following definition, where we allow cohomology with local coefficients:}

\begin{definition} \rojo{Let $X$ be a connected space and $R$ be a commutative ring.}
\begin{itemize}\item[(a)] Given a positive integer $s$, we denote by $\zcl_s\, (H^*(X;\rojo{R}))$ the cup-length of elements in the kernel of the map induced by $d_s$ in cohomology. \rojo{Explicitly}, $\zcl_{s}\, (H^*(X;\rojo{R}))$ is the largest integer $m$ for which there exist cohomology classes $u_i \in H^*(X^s, A_i)$, where $X^s$ is the $s$-th Cartesian \rojo{power} of $X$ and \rojo{each} $A_i$ is a system of local coefficients, such that $\rojo{d_s^*(u_i)} =0$ for $i=1, \ldots, m\hspace{.3mm}$ and $\hspace{.5mm}0 \neq u_1 \otimes \cdots \otimes u_m \in H^*(X^s, \, A_1 \otimes \cdots \otimes A_m)$.
\item[(b)] The homotopy dimension of $X$, $\hdim (X)$, is the smallest dimension of $\CW$ complexes having the homotopy type of $X$. The connectivity of $X$, $\conn (X)$, \rojo{is the largest integer $c$ such that $X$ has trivial homotopy groups in dimensions at most $c$. We set $\conn(X)=\infty$ when no such $c$ exists.} 
\end{itemize}
\end{definition}

\begin{proposition}\label{ulbTCn}
For a path connected space $X$, $$ \zcl_{s}\,(H^*(X;\rojo{R})) \leq \TC_s (X) \leq \frac{s \, \hdim (X) }{\conn (X) +1}.$$
\kp{In particular for every path connected $X$,} $$\kp{\TC_s(X)\leq s\hdim(X).}$$
\end{proposition}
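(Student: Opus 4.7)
The plan is to handle the two inequalities separately and then derive the ``in particular'' clause from the second. Both bounds are standard consequences of viewing $\TC_s(X)$ as the Schwarz genus of either $e_s$ or (equivalently, via fibrational replacement) of the diagonal $d_s$.

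For the lower bound $\zcl_s(H^*(X;R))\le\TC_s(X)$, I will run the classical zero-divisor cup-length argument of Schwarz in the local-coefficient setting. Write $n=\TC_s(X)$ and pick an open cover $X^s=U_0\cup\cdots\cup U_n$ by local domains of $e_s$; equivalently, each inclusion $U_i\hookrightarrow X^s$ factors up to homotopy through $d_s$. Hence any zero-divisor $u\in H^*(X^s;A)$ (i.e.\ $d_s^{*}u=0$) restricts to zero on each $U_i$, and so lifts to a relative class $\tilde u\in H^*(X^s,U_i;A)$. Given $m=\zcl_s(H^*(X;R))$ zero-divisors $u_1,\ldots,u_m$ with nonzero external product, a relabeling of the cover together with the relative cup product pairing produce a class in $H^*\!\bigl(X^s,\,\bigcup_{i=1}^m U_i;\,A_1\otimes\cdots\otimes A_m\bigr)$ mapping to the nonvanishing product in $H^*(X^s;A_1\otimes\cdots\otimes A_m)$. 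If $m\ge n+1$ the union would be all of $X^s$, making the class (and hence the product) zero; contradiction, so $m\le n$.

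For the upper bound I factor it as
$$\TC_s(X)\;\le\;\cat(X^s)\;\le\;\frac{s\,\hdim(X)}{\conn(X)+1}.$$
The first inequality is standard: given an open $U\subset X^s$ contractible in $X^s$ to $y_0=(y_0^1,\ldots,y_0^s)$, and a fixed choice of paths $\alpha_i$ from a global basepoint to $y_0^i$, one builds a local section of $e_s$ over $U$ by sending $u=(u_1,\ldots,u_s)$ to the $s$-tuple of paths obtained by concatenating each $\alpha_i$ with the reverse of the $i$-th coordinate of the chosen contraction of $u$ to $y_0$; all $s$ paths start at the common basepoint. The second inequality is the Ganea--Cornea-type upper bound for Lusternik--Schnirelmann category, applied to the space $X^s$, whose homotopy dimension is at most $s\,\hdim(X)$ and whose connectivity equals $\conn(X)$. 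The ``in particular'' claim then drops out since $X$ is path connected, forcing $\conn(X)+1\ge 1$.

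The delicate step I expect is the coherence of the relative cup product with local coefficients in the lower bound: one needs to verify that the external tensor product $A_1\otimes\cdots\otimes A_m$ of local systems on $X^s$ pairs correctly with the relative cup product on $H^*(X^s,\bigcup U_i;-)$, and that the lifts $\tilde u_i$ can be chosen compatibly. Once this is laid out cleanly, the remaining verifications are routine.
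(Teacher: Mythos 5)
The paper does not prove this proposition itself; it defers to \cite[Theorem~3.9]{bgrt} and, more generally, to \cite[Theorems~4 and~5]{Schwarz66}. Your argument is correct and is the standard one underlying those references: the lower bound by lifting each zero-divisor to a relative class $H^*(X^s,U_i;A_i)$ (using that each local domain $U_i\hookrightarrow X^s$ factors up to homotopy through $d_s$) and multiplying via the relative cup product into $H^*(X^s,\bigcup U_i)=0$ once all $\TC_s(X)+1$ sets are used; the upper bound by chaining $\TC_s(X)\le\cat(X^s)$ (your explicit section from a contraction of $U$ plus fixed basepoint paths is the usual construction) with the classical estimate $\cat(Y)\le\hdim(Y)/(\conn(Y)+1)$ applied to $Y=X^s$, using $\hdim(X^s)\le s\,\hdim(X)$ and $\conn(X^s)=\conn(X)$.
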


For a proof see~\cite[Theorem~3.9]{bgrt} \rojo{or, more generally,} \cite[Theorems 4 and 5]{Schwarz66}.  

\medskip
\rojo{The spaces we work with arise as follows. For} a positive integer \rojo{$k_i$} consider the \rojo{minimal} cellular structure \rojo{on} the \rojo{$k_i$}-dimensional sphere $S^{\rojo{k_i}}=e^0\cup e^{\rojo{k_i}}$. Here $e^0$ \red{is} the base point, which \rojo{is simply denoted} by $e$. Take the product \rojo{(\colorito{therefore} minimal)} \red{cell} decomposition \rojo{in}
$$
\rojo{\mathbb{S}{(k_1,\ldots, k_n)}:={S}^{k_1}\times\dots\times{S}^{k_n}}=\hspace{.3mm}\red{\bigsqcup_Je_J}
$$
whose cells $e_J$, indexed by subsets $J\subseteq[n]=\{1,\dots,n\}$, are defined as $e_J=\prod_{i=1}^ne^{d_i}$ where $d_i=0$ if $i \notin \azul{J}$ and $d_i=k_i$ if $i \in \azul{J}$. Explicitly, $$\red{e_J} =\left\{\red{(x_1,\ldots,x_n)}\in \rojo{\mathbb{S}(k_1,\ldots,k_n)}\;| \textrm{ $x_i=\red{e^0}$ if and only if $i\notin J$}\right\}.$$ 

\rojo{It is well known that the lower bound in Proposition~\ref{ulbTCn} is optimal for $\mathbb{S}(k_1,\ldots,k_n)$; Theorem~\ref{TC_s(X)} below asserts that the same phenomenon \rojo{holds} for subcomplexes. Note that, while $\mathbb{S}(k_1,\ldots,k_n)$ can be thought of as the configuration space of a mechanical robot arm whose $i$-th node moves freely in $k_i$ dimensions, a subcomplex $X$ of $\mathbb{S}(k_1,\ldots,k_n)$ encodes the information of the configuration space that results by imposing restrictions on the possible combinations of simultaneously moving nodes of the robot arm.}

\begin{theo}\label{TC_s(X)}
\rojo{A subcomplex $X\hspace{-.2mm}$ of $\hspace{.7mm}\mathbb{S}(k_1,\ldots,k_n)$ has} $\TC_s(X)=\zcl_{s}(H^*(X;\rojo{\mathbb{Q}}))$.
\end{theo}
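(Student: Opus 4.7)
The lower bound $\zcl_s(H^*(X;\mathbb{Q}))\le\TC_s(X)$ is Proposition~\ref{ulbTCn}, so the theorem reduces to constructing, for each subcomplex $X\subseteq\mathbb{S}(k_1,\dots,k_n)$, a motion planner on $X$ with at most $\zcl_s(H^*(X;\mathbb{Q}))+1$ local domains. My plan has an algebraic stage and a geometric-combinatorial stage.

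First, let $K$ be the simplicial complex on $[n]$ defined by $K=\{J:e_J\subseteq X\}$; then $X$ is the polyhedral product $Z(K,\{(S^{k_i},\star)\})$. I would start from the Stanley--Reisner-type description
$$H^*(X;\mathbb{Q})\;\cong\;\mathbb{Q}[u_1,\dots,u_n]\big/\bigl\langle u_i^2,\; u_J:\,J\notin K\bigr\rangle,\qquad |u_i|=k_i,$$
so that $H^*(X^s;\mathbb{Q})=H^*(X;\mathbb{Q})^{\otimes s}$ and $\ker d_s^*$ is generated by the standard zero-divisors $u_i^{(a)}-u_i^{(b)}$. I would then compute $\zcl_s(H^*(X;\mathbb{Q}))$ explicitly by exhibiting a maximal product of such zero-divisors whose expansion contains a nonzero monomial indexed by an $s$-tuple of faces of $K$, accounting for the parities of the $k_i$. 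This yields an explicit combinatorial value $\nu(K,s,(k_i))$.

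Second, I would construct a motion planner on $X^s$ with exactly $\nu(K,s,(k_i))+1$ local domains. Stratify $X^s$ by cell-tuples $(e_{J_1},\dots,e_{J_s})$ with $J_j\in K$. Given $(x_1,\dots,x_s)$ in such a stratum, set $J_*=J_1\cap\cdots\cap J_s$; because $K$ is simplicial, $J_*\in K$, so $\prod_{i\in J_*}S^{k_i}\subseteq X$ is where motions can safely be planned. The local rule is two-phased: first retract each $x_j$ inside the closed cell $\bar e_{J_j}\subseteq X$ to a point whose non-basepoint coordinates lie in $J_*$; then apply on $\prod_{i\in J_*}S^{k_i}$ the known optimal motion planner for a product of spheres, saving a local domain on each odd-dimensional factor by means of a nowhere-zero tangent vector field and using two on each even-dimensional factor. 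Strata are then grouped into pairwise disjoint ENR local domains by matching the monomial bookkeeping from the first stage.

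The main obstacle is to make the count in this geometric construction match exactly $\nu(K,s,(k_i))+1$. The savings provided by nowhere-zero vector fields on odd-dimensional spheres must survive the restriction to $X$, and strata corresponding to distinct cell-tuples $(J_1,\dots,J_s)$ producing the same parity and dimension data must be combinable into a single ENR local domain supporting a continuous section of $e_s$. Keeping the geometric and algebraic counts in lockstep, across all face combinatorics of $K$ and parities of the $k_i$, is where the bulk of the technical work will lie.
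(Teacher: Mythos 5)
The lower-bound step and the algebraic computation of $\zcl_s(H^*(X;\mathbb{Q}))$ from the Stanley--Reisner presentation are both on track; this matches what the paper does in Propositions~\ref{cotaabajoimpar}, \ref{casiultima} and~\ref{cotaabajoimparbis}. The gap lies in the geometric stage. Your proposed local rule retracts each $x_j$ to a point $y_j$ whose non-basepoint coordinates lie in $J_*=J_1\cap\cdots\cap J_s$ and then plans inside $\prod_{i\in J_*}S^{k_i}$. But $J_*$ depends on the cell-tuple $(J_1,\dots,J_s)$ containing $(x_1,\dots,x_s)$, so the retraction is discontinuous across cell-tuple strata: if $x_{ij'}\to e^0$ for a single $j'$ with $i\in J_*$, then $i$ leaves $J_*$ and $y_{ij}$ jumps from $x_{ij}$ to $e^0$ for every $j\neq j'$ with $x_{ij}$ far from $e^0$. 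Hence your local domains would have to lie inside individual cell-tuple strata, and the total count then far exceeds $\zcl_s(H^*(X;\mathbb{Q}))+1$. The ``monomial bookkeeping'' grouping you gesture toward cannot rescue this, because no regrouping of strata makes a discontinuous rule continuous.

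The paper avoids the obstruction with a fundamentally different decomposition and a ``pivot'' timing strategy. Instead of stratifying by cell-tuples, it stratifies $X^s$ by the type $P=(P_1,\dots,P_n)$, an $n$-tuple of partitions of $[s]$ recording, row by row, which columns agree up to sign; the pieces $X^s_P$ of a fixed norm $|P|$ cut across cell strata and are shown in Proposition~\ref{interxp} to be mutually separated, so their union is a legitimate ENR local domain. The local rule on such a piece never retracts to $J_*$. Each coordinate path $\varphi_{ij}$ waits at $b_{i1}$ for time $t_{b_{i1}}=\tfrac12-d(b_{i1},e^0)$ and only then moves to $b_{ij}$ at unit speed; this delay forces the image to lie in $\overline{e_{J_1}}$ for $t\le\tfrac12$ and in $\overline{e_{J_j}}$ for $t\ge\tfrac12$ (Proposition~\ref{noleave}), so the path never leaves $X$, and since the delay is continuous in $b_{i1}$ the rule is continuous on each $X^s_{P,\beta}$. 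It is this combination---the $P$-type decomposition plus the pivot delay---that produces exactly $\zcl_s(H^*(X;\mathbb{Q}))+1$ local domains, and it is not a refinement of your cell-tuple stratification but a genuinely transverse one.
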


Our methods imply that Theorem~\ref{TC_s(X)} could equally be stated using cohomology with coefficients in any ring of characteristic $0$. 

\medskip
\colorito{We provide an explicit description of $\zcl_{s}(H^*(X;\rojo{\mathbb{Q}}))$. The answer turns out to depend exclusively on the parity of the sphere dimensions $k_i$ (and on the combinatorics of the abstract simplicial complex underlying $X$). In order to better appreciate the phenomenon, it is convenient to focus first on the case where all the $k_i$ have the same parity\footnote{\colorito{An earlier version of the paper, signed by the current three authors, dealt only with the case when all the $k_i$ have the same parity. The unrestricted case was worked out later by the second named author using a mild variation of the original methods. Her results are included in the current updated version of the paper.}}. The corresponding descriptions,} in Theorems~\ref{tcsX odd} and~\ref{TC_s(X) even} as well as Corollary~\ref{sergey} \az{in the next section,} \colorito{generalize} those in~\cite{CohenPruid08,sergeypreprint}. \colorito{The unrestricted description is given in Subsection~\ref{stn4.1} (see~Theorem~\ref{TC_s(X), gene}). In either case,} the optimality of \colorito{the} cohomological lower bound will be a direct consequence of the fact that we actually construct an optimal motion planner. Our construction generalizes, in a high\kp{l}y non-trivial way, the one given \kp{first by \colorito{the third author} (\cite{Yu07}) for $s=2$ when $X$ is an arrangement complement, and then independently \az{by} Cohen-Pruidze (\cite{CohenPruid08}, as corrected in~\cite{morfismos}) in a more general case.}

\medskip
\rojo{By Hattori's work~\cite{hattori}, complements of generic complex hyperplane arrangements are up-to-homotopy examples of the spaces dealt with in Theorem~\ref{TC_s(X)} (with $k_i=1$ for all $i$). Those spaces are known to be formal, so their {\it rational} higher topological complexity has been shown in~\cite{carrasquel} to agree with the cohomological lower bound. Of course, such an observation can be recovered from Theorem~\ref{TC_s(X)} in view of the general fact that the rational topological complexity bounds from below the regular one. In any case it is to be noted that the rational higher TC agrees with the regular one for complements of generic complex hyperplane arrangements. Furthermore, these observations apply also for complements of the ``redundant'' arrangements considered in~\cite{CCX}, as well as for Eilenberg-Mac Lane spaces of \kp{all Artin type groups for finite groups generated by reflections, see \cite{sergeypreprint}}. In this direction, it is interesting to highlight that the agreement noted above between the rational higher TC and the usual one does not hold for other formal spaces. For instance, Lucile Vandembroucq has brought to the author's attention the fact that the rational $\TC_2$ of the symplectic group Sp$(2)$ is 2, one lower than its regular topological complexity.}

\medskip
\rojo{The bounds \rojo{in Proposition~\ref{ulbTCn}} for the higher topological complexity of a space easily yield Theorem~\ref{TC_s(X)} when all the $k_i$ agree with a fixed even number. If all the $k_i$ are even (but not necessarily equal), the result can still be proved with relative ease using the fact that the sectional category of a fibration is bounded from above by the cone-length of its base~(c.f.~\cite{LuMa}). \colorito{This} idea will be used in Section~\ref{othersec} in order to analyze the higher topological complexity of other polyhedral product spaces. But insisting on obtaining the required upper bound from the construction of explicit optimal motion planners (as we do) imposes a mayor task which, ironically, is much more elaborate when all the $k_i$'s are even. Yet, it seems to be extremely hard to give a proof of Theorem~\ref{TC_s(X)} that does not depend on the construction of an optimal motion planner if \colorito{at least one of the} $k_i$'s \colorito{is} odd.}

\section{\rojo{Optimal motion planners}}\label{secciondosmeramera}
\rojo{In this section we construct optimal motion planners for a subcomplex $X$ of $\mathbb{S}(k_1,\ldots,k_n)$} when all the \rojo{$k_i$'s} have the same parity. \rojo{We start by setting up some basic notation.}

\medskip
We \rojo{think of an element} $(b_1, b_2, \ldots, b_s) \in X^s$, with $b_j=(b_{1j}, \ldots, b_{nj}) \in X \subseteq \rojo{\mathbb{S}(k_1,\ldots,k_n)}$, as a matrix of size $n \times s$ whose \rojo{entry} $b_{ij}$ belongs to $\rojo{S^{k_i}}$ for all \rojo{$(i,j)\in [n]\times[s]$. (Here and below, for a positive integer $m$, $[m]$ stands for the initial integer interval $\{1,2,\ldots,m\}$, while $[m]_0$ stands for $[m]\cup\{0\}$).} Let $$\mathcal{P}=\{(P_1, \ldots, P_n)\,\, |\,\, P_i \,\, \mbox{is a partition of}\,\, \rojo{[s]} \,\, \mbox{for each}\,\, \rojo{i\in[n]}\,\}$$ be the set of $n$-\azz{tuples} of partitions of the interval \rojo{$[s]$. We assume that} elements $(P_1, \ldots, P_n) \in \mathcal{P}$ are ``ordered'' \rojo{in the sense that, if} $P_i=\{\alpha_1^i, \ldots, \alpha_{n(P_i)}^i\}$, \rojo{then} $L(\alpha_k^i) < L(\alpha_{k+1}^i)$ for $k \in \rojo{[n(P_i)-1]}$ where $L(\alpha_k^i)$ \kp{is defined as the smallest element of the set} $\alpha_{k}^i$. \rojo{In particular $1\in\alpha^i_1$.} \rojo{The norm of} each such $P=(P_1, \ldots, P_n) \in \mathcal{P}$ \rojo{is defined as}
\begin{equation}\label{lanormadeP}
|P|:=\sum_{i=1}^{n}(n(P_i)-1)=\sum_{i=1}^{n} |P_i|-n,
\end{equation}
the sum of all cardinalities of the partitions $P_i$ minus $n$. \rojo{We let}
$$
X^s_{P}=\Big\{(b_1, b_2, \ldots, b_s) \in X^s\,\, \Big| \!\!\!\!\!\begin{array}{ll} & \mbox{for each } i \in \rojo{[n]}, \;b_{ik}=\pm b_{i\ell} \mbox{ if and only \az{if}} \\ & \mbox{\az{both} $k$ \az{and} $\ell$ belong to the same part of $P_i$}
 \end{array} \Big\},
$$
\rojo{and say that an element $(b_1,b_2,\ldots,b_n)\in X^s_P$ has type $P$. Note that, if} $G:= \mathbb{Z}_2= \{1, -1\}$ \rojo{acts antipodally} on each sphere \rojo{$S^k$ and, for $x \in S^k$, $G\cdot x$ stands for the $G$-orbit of $x$, then}
\begin{equation}\label{Pcardinality}
|P_i|= \left|\{G\cdot b_{ij} \,\,|\,\,  \rojo{j\in[s]}\}\right| 
\end{equation}  
\rojo{for $(b_1, \ldots, b_s) \in X^s_{P}$ and $i \in[n]$. In addition, we consider $n$-tuples} $\beta=(\beta^1, \ldots, \beta^n)$ \rojo{of (possibly empty) subsets} $\beta^i \subseteq \alpha_1^i- \{1\}$ for $i \in\rojo{[n]}$, \rojo{and set} 
$$X^s_{P, \beta}= X^s_{P} \cap \left\{(b_1, \rojo{b_2,}\ldots, b_s) \in X^s \,\,|\,\, b_{i1}= b_{ik} \rojo{{}\Leftrightarrow{}} k \in \beta^i, \, \rojo{\forall\, (i,k)\in[n]\times \left([s]-\{1\}\right)}\right\}.$$  
\rojo{Note that the disjoint union decomposition}
\begin{equation}\label{X^s_P}
X^s_{P}=\bigsqcup_{\beta} X^s_{P, \beta},
\end{equation} 
\rojo{running over all $n$-tuples $\beta=(\beta^1, \ldots, \beta^n)$ as above, is \kp{topological,} \az{that is, the subspace topology in $X^s_P$ agrees with the so called \emph{disjoint union topology} determined by the subspaces $X^s_{P,\beta}$. In other words, a subset $U\subseteq X^s_P$ is open if and only if each of its pieces $U\cap X^s_{P,\beta}$ (for $\beta$ as above) is open in $X^s_{P,\beta}$. Needless to say, the relevance of this property comes from the fact that the continuity of a local rule on $X^s_P$ is equivalent to the continuity of the restriction of the local rule to each $X^s_{P,\beta}$.}} 

\subsection{Odd case}\label{casoimp}
\rojo{Throughout this subsection we assume that all $k_i$ are odd. We start by recalling an optimal motion planner for the sphere $\mathbb{S}(2d+1)=S^{2d+1}\hspace{.3mm}$---for which $\TC_s(\mathbb{S}(2d+1))=s-1$ \kp{a}s well known.}

\begin{ejem}\label{TCs S}{\em
\rojo{Local domains for $\mathbb{S}(2d+1)$ in the case $s=2$ are given by}
$$
A_0=\left\{(x, \rojo{-x}) \in \rojo{\mathbb{S}(2d+1)\times\mathbb{S}(2d+1)}\right\} \mbox{ \ \ and \ \ }A_1=\left\{(x, y) \in\rojo{\mathbb{S}(2d+1)\times\mathbb{S}(2d+1)} \,|\, x \neq -y\right\}
$$
\rojo{with corresponding} local rules $\phi_i$ $(i=0,1)$ described as follows: \rojo{For} $(x, -x)\in A_{0}$, $\phi_0 (x, \rojo{-x})$ \rojo{is} the \rojo{path} at constant speed from $x$ to $-x$ along the semicircle determined by $\nu(x)$, \rojo{where $\nu$ is some fixed  \kp{non-zero} tangent vector field of $\mathbb{S}(2d+1)$.} For $(x,y)\in A_1$, $\phi_1(x,y)$ \rojo{is}  the path at constant speed along the geodesic arc connecting $x$ with $y$. \rojo{To deal with the case $s>2$, we consider the domains $B_j$, $j \in[s-1]_0$, consisting of $s$-tuples} $(x_1, \ldots, x_s) \in\rojo{\mathbb{S}(2d+1)^s}$ \rojo{for which} $$\left\{\,k \in \{2, \ldots, s\}\,\,|\,\,x_1\neq -x_k\,\right\}$$ has cardinality $j$, \rojo{with local rules $\psi_j: B_j \rightarrow \mathbb{S}(2d+1)^{J_s}$ given by} $$\psi_j((x_1, \ldots, x_s))=\left(\psi_{j1}(x_1, x_1), \ldots, \psi_{js}(x_1, x_{s})\right)$$ \rojo{where} $\psi_{ji}(x_1, x_{i})= \phi_{\rojo{r}}(x_1, x_{i})$ \rojo{if} $(x_1, x_i)\in\rojo{A_r}$. \rojo{As shown in~\cite[Section~4]{Ru10}, the family $\{(B_j,\psi_j)\}$ is an optimal (higher) motion planner for $\mathbb{S}(2d+1)$.}
}\end{ejem}

A well known chess-board combination of the domains $B_j$ in Example~\ref{TCs S} yield domains for an optimal motion planner for the product $\mathbb{S}(k_1,\ldots,k_n)$ (see for instance the proof of Proposition~22 in page~84 of~\cite{Schwarz66}). \colorito{But the situation for an arbitrary subcomplex $X\az{{}\subseteq{}}\mathbb{S}(k_1,\ldots,k_n)$ is much more subtle.} 
\colorao{Actually, as} it will be clear from \az{the discussion} below, $\TC_s(X)$ is determined by \az{the} combinatorics of $X$ which we define \az{next}.

\medskip
First, for a given integer $s>1$,  \az{the $s$-norm of a finite (abstract) simplicial complex $\mathcal{K}$ is the integer invariant}
$$ 
\N^{\rojo{s}}(\mathcal{K}):=\displaystyle{\max \left\{ \, \rojo{\N_{\az{\mathcal{K}}}(J_1,J_2,\ldots,J_s)}
 \,\,|\,\, J_j \, \mbox{is a 
 simplex of } \mathcal{K}\,\, \text{for all}\,\, j \in \rojo{[s]} \right\}},
$$ 
\rojo{where}
\begin{equation}\label{nx} 
\displaystyle{\N_{\mathcal{K}}(J_1, J_2, \ldots, J_s): = \sum_{\ell=2}^s \left(\Big| \bigcap_{m=1}^{\ell-1} J_m- J_{\ell} \Big|+ \Big| J_{\ell} \Big| \right)}.
\end{equation}

\kp{Now we notice some properties of the above formulas  and give a simpler  more symmetric definition of $\displaystyle{\N_{\mathcal{K}}}$. \az{Start by observing that $\N_{\mathcal{K}}(J_1, J_2, \ldots, J_s)\leq\N_{\mathcal{K}}(J'_1, J'_2, \ldots, J'_s)$ provided $J_i\subseteq J'_i$ for $i\in[s]$. Consequently}
$$
\az{\N^{\rojo{s}}(\mathcal{K})=\displaystyle{\max \left\{ \, \rojo{\N_{\az{\mathcal{K}}}(J_1,J_2,\ldots,J_s)}
 \,\, |\,\, J_j \, \mbox{is a {\it maximal} 
 \hspace{.2mm}simplex of } \mathcal{K}\,\, \text{for all}\,\, j \in \rojo{[s]} \right\}},}
$$
\az{a formula that is well suited for the computation of $\N^s(\mathcal{K})$ in concrete cases.} Also let us put $I_{\ell}=\bigcap_{m=1}^{\ell-1} J_m- J_{\ell}$ for $\ell=2,3,\ldots,s$.} \az{Since $\bigcup_{\ell=2}^s I_{\ell}\subseteq J_1$ with $I_m\cap I_{m'}=\emptyset$ for every $m\not=m'$, we have:}

\begin{lema}\label{properties} 
\az{For (not necessarily maximal) simplexes $J_1,J_2,\ldots,J_s$ of $\mathcal{K}$,}
$$\kp{\displaystyle{\N_{\mathcal{K}}}(J_1, J_2, \ldots, J_s) = \sum_{\ell=2}^s\Big| I_{\ell}\Big|+ \sum_{\ell=2}^s\Big| J_{\ell}\Big|\leq \sum_{\ell=1}^s \Big| J_{\ell}\Big|.}$$
\end{lema}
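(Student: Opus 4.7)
The proof breaks naturally into two parts: the equality, which is just a rewriting of the defining formula (\ref{nx}), and the inequality, which reduces to a combinatorial claim about the sets $I_\ell$.

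For the equality, I would simply note that (\ref{nx}) reads
$$\N_{\mathcal{K}}(J_1,\ldots,J_s)=\sum_{\ell=2}^s\left(\Big|\bigcap_{m=1}^{\ell-1}J_m-J_\ell\Big|+|J_\ell|\right),$$
and by the very definition $I_\ell=\bigcap_{m=1}^{\ell-1}J_m-J_\ell$ the first summand inside the parentheses is $|I_\ell|$. Splitting the sum into its two halves gives the displayed equality.

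For the inequality, since the second sum $\sum_{\ell=2}^s|J_\ell|$ already appears on the right-hand side as part of $\sum_{\ell=1}^s|J_\ell|$, it suffices to show that
$$\sum_{\ell=2}^s|I_\ell|\;\leq\;|J_1|.$$
I would establish this by proving the two properties the excerpt already flags:
\begin{itemize}
\item[(i)] $I_\ell\subseteq J_1$ for every $\ell\geq 2$, which is immediate since $I_\ell\subseteq\bigcap_{m=1}^{\ell-1}J_m\subseteq J_1$;
\item[(ii)] the family $\{I_\ell\}_{\ell=2}^{s}$ is pairwise disjoint.
\end{itemize}
Together, (i) and (ii) yield $\sum_{\ell=2}^s|I_\ell|=\big|\bigcup_{\ell=2}^s I_\ell\big|\leq|J_1|$, completing the proof.

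The only non-trivial point is (ii), and that is the main (small) obstacle. To verify it I would fix $2\leq\ell<\ell'\leq s$ and observe that, on the one hand, $I_\ell\subseteq\bigcap_{m=1}^{\ell-1}J_m-J_\ell$ forces every element of $I_\ell$ to lie outside $J_\ell$; on the other hand, since $\ell\leq\ell'-1$, any element of $I_{\ell'}\subseteq\bigcap_{m=1}^{\ell'-1}J_m$ must lie in $J_\ell$. Thus $I_\ell\cap I_{\ell'}=\emptyset$, as required. No further ingredients are needed: the statement is purely set-theoretic, with no appeal to the simplicial structure of $\mathcal{K}$ beyond the fact that the $J_\ell$ are finite sets.
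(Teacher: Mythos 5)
Your proof is correct and follows exactly the paper's (terse) argument: the equality is a direct rewrite of~(\ref{nx}) using $I_\ell=\bigcap_{m=1}^{\ell-1}J_m-J_\ell$, and the inequality follows from $\bigcup_{\ell\ge2}I_\ell\subseteq J_1$ together with pairwise disjointness of the $I_\ell$. You also spell out the short verification of disjointness, which the paper merely asserts in the sentence preceding the lemma.
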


\begin{proposition}\label{difdef}
\az{For $J_1,J_2,\ldots,J_s$ as above}
\begin{equation}\label{sy}
\kp{\displaystyle{\N_{\mathcal{K}}}(J_1, J_2, \ldots, J_s)= \sum_{\ell=1}^s| J_{\ell}|-\Big|\bigcap_{\ell=1}^s J_{\ell}
\Big|.} 
\end{equation}
\end{proposition}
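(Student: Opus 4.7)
The plan is to build on Lemma~\ref{properties} and reduce the statement to computing the cardinality of $\bigsqcup_{\ell=2}^s I_\ell$, where as before $I_\ell=\bigcap_{m=1}^{\ell-1}J_m-J_\ell$. By Lemma~\ref{properties},
$$\N_{\mathcal{K}}(J_1,\ldots,J_s)=\sum_{\ell=2}^s|I_\ell|+\sum_{\ell=2}^s|J_\ell|,$$
so the desired formula~\eqref{sy} is equivalent to the identity
$$\sum_{\ell=2}^s|I_\ell|\;=\;|J_1|-\Big|\bigcap_{\ell=1}^s J_\ell\Big|.$$

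The key combinatorial observation is that the pairwise disjoint sets $I_2,I_3,\ldots,I_s$ (all contained in $J_1$) form a partition of $J_1\setminus\bigcap_{\ell=1}^s J_\ell$. To see this I would argue by double inclusion. For an element $x\in J_1$ that does not lie in $\bigcap_{\ell=1}^s J_\ell$, let $\ell_0$ be the smallest index in $\{2,\ldots,s\}$ with $x\notin J_{\ell_0}$; then $x\in J_m$ for all $m<\ell_0$, whence $x\in I_{\ell_0}$. Conversely every $I_\ell$ is by definition contained in $J_1$ and avoids $J_\ell$, so it is disjoint from $\bigcap_{m=1}^s J_m$; disjointness of the $I_\ell$ among themselves follows from the fact that $x\in I_\ell$ forces $x\in J_m$ for $m<\ell$, preventing $x$ from belonging to any $I_{\ell'}$ with $\ell'<\ell$.

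From this disjoint decomposition one reads off $\sum_{\ell=2}^s|I_\ell|=|J_1|-|\bigcap_{\ell=1}^s J_\ell|$, and substituting into the expression from Lemma~\ref{properties} yields precisely~\eqref{sy}. There is no serious obstacle; the only thing to be careful about is the choice of the ``smallest index $\ell_0$'' in the forward inclusion, which is what guarantees that the complementary intersections $\bigcap_{m=1}^{\ell_0-1}J_m$ do contain $x$. No hypothesis beyond finiteness of the $J_i$'s is used, so the statement holds verbatim for arbitrary (not necessarily maximal) simplexes, in agreement with the formulation of Lemma~\ref{properties}.
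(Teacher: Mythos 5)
Your proof is correct and takes essentially the same approach as the paper: both invoke Lemma~\ref{properties} to reduce the claim to the identity $\bigsqcup_{\ell=2}^s I_\ell = J_1 - \bigcap_{\ell=1}^s J_\ell$, and both establish it by the same double-inclusion argument keyed on the smallest index $\ell$ with $x\notin J_\ell$. The only superficial difference is that you re-derive the pairwise disjointness of the $I_\ell$'s, which the paper had already recorded just before Lemma~\ref{properties}.
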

\begin{proof}
\kp{Due to Lemma~\ref{properties} it suffices to \az{prove} the equality
$$\bigcup_{\ell=2}^sI_{\ell}=J_1-\bigcap_{\ell=1}^sJ_{\ell}.$$
\az{An element $x$ on} the left hand side (LHS) \az{satisfies} $x\in I_{\ell}$ for some $\ell\geq 2$ whence $x\not\in J_{\ell}$. Thus $x$ \az{lies on} the right hand side (RHS). Conversely, \az{for an element $x$ on} the RHS chose the smallest $\ell\geq 2$ such that $x\not\in J_{\ell}$.  By the choice of $\ell$
and definition of $I_{\ell}$ we have $x\in I_{\ell}$ whence $x$ \az{lies on} LHS.}
\end{proof}

\begin{corollary}
\kp{$\displaystyle{\N_{\mathcal{K}}}(J_1, J_2, \ldots, J_s)$ does not depend on the ordering of the set of simplexes.}
\end{corollary}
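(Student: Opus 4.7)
The plan is immediate from Proposition~\ref{difdef}. Once the identity
$$\N_{\mathcal{K}}(J_1, J_2, \ldots, J_s) = \sum_{\ell=1}^s |J_{\ell}| - \Big|\bigcap_{\ell=1}^s J_{\ell}\Big|$$
is in hand, I would simply point out that the right-hand side is manifestly symmetric under any permutation of the tuple $(J_1,\ldots,J_s)$: the sum of cardinalities is unchanged by reordering its summands, and the intersection $\bigcap_{\ell=1}^s J_\ell$ is an intersection of a (finite) \emph{set} of subsets, hence independent of the order in which its members are listed. Therefore the left-hand side, which \emph{a priori} depends on the ordering through the inductive definition involving $I_\ell = \bigcap_{m=1}^{\ell-1} J_m - J_\ell$, must actually be symmetric.

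In other words, the proof is a single sentence: apply Proposition~\ref{difdef} and invoke the symmetry of the two operations appearing on the RHS of~\eqref{sy}. There is no obstacle here; the work was done in establishing Proposition~\ref{difdef}, and this corollary is merely the observation that the symmetric closed-form expression derived there immediately yields permutation invariance. I would keep the write-up to one or two lines, simply citing~\eqref{sy} and noting that the sum and intersection on its RHS are symmetric functions of $J_1,\ldots,J_s$.
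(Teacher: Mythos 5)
Your proof is correct and is exactly the argument the paper intends: the corollary is an immediate consequence of the symmetric closed form in Proposition~\ref{difdef} (equation~\eqref{sy}), whose right-hand side is visibly invariant under permuting $J_1,\ldots,J_s$. Nothing more is needed.
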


\kp{Now we apply the combinatorics we have developed to \az{a} CW subcomplex $X\az{{}\subseteq{}} \mathbb{S}(k_1,\ldots,k_n)$.}

\begin{definition}
\az{The index of $X$ is the (abstract)}
\kp{simplicial complex}
$$\kp{\mathcal{K}_X=\{J\subseteq[n] \;\;|\;\; e_J \mbox{ is a cell of $X$}\}.}$$
\az{For $d\in [n]$, we say that $X$ is $d$-pure (or simply pure, if $d$ is implicit) if its index is $d$-pure in the sense that all maximal simplexes of $\mathcal{K}_X$ have cardinality $d$.}
\end{definition}

\begin{remark}\label{gmac} \kp{\em Using the terminology from \cite{BahBende10}, $X$ is the \roj{polyhed\az{ral} product space} determined by the set of pairs $\az{\{}(S^{k_1},e),\ldots,(S^{k_n},e)\az{\}}$ and $\mathcal{K}_X$.}
\end{remark}

We use the notation $\displaystyle{\N_{X}}(J_1, J_2, \ldots, J_s)$ and $\N^{\rojo{s}}(X)$ for $\displaystyle{\N_{\mathcal{K}_{\az{X}}}}(J_1, J_2, \ldots, J_s)$ and $\displaystyle{\N^{\rojo{s}}(\mathcal{K}_{\az{X}}})$ respectively.

\smallskip\kp{Now we state one of \az{the} main results of the paper.}

\begin{theo}\label{tcsX odd}
\rojo{Assume all of the $k_i$ are odd. A subcomplex $X$ of the minimal CW cell structure on $\mathbb{S}(k_1,\cdots,k_n)$ has} $$\TC_s(X)=\N^{\rojo{s}}(X).$$
\end{theo}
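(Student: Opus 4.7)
The plan is to prove the two inequalities $\TC_s(X)\geq\N^s(X)$ and $\TC_s(X)\leq\N^s(X)$ separately, combining the cohomological lower bound of Proposition~\ref{ulbTCn} with the explicit construction of an optimal motion planner.

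For the lower bound, I would use that, since all $k_i$ are odd, $H^*(\mathbb{S}(k_1,\ldots,k_n);\mathbb{Q})$ is the exterior algebra on generators $x_1,\ldots,x_n$ with $\deg x_i=k_i$, and $H^*(X;\mathbb{Q})$ is the quotient in which a monomial $x_{i_1}\cdots x_{i_r}$ (with distinct indices) survives precisely when $\{i_1,\ldots,i_r\}\in\mathcal{K}_X$. In $H^*(X^s;\mathbb{Q})$ write $x_i^{(\ell)}$ for the pull-back of $x_i$ along the $\ell$-th projection; the classes $\bar x_i^{(\ell)}:=x_i^{(1)}-x_i^{(\ell)}$ lie in the kernel of $d_s^*$. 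Given simplices $J_1,\ldots,J_s$ of $\mathcal{K}_X$ realizing $\N^s(X)$ and setting $I_\ell=\bigcap_{m<\ell}J_m-J_\ell$ as in Lemma~\ref{properties}, I would propose the cup product obtained by multiplying $\bar x_i^{(\ell)}$ over every $\ell\geq 2$ and every $i\in J_\ell\cup I_\ell$. Lemma~\ref{properties} together with Proposition~\ref{difdef} ensures this product has length exactly $\N^s(X)$. Its non-vanishing in $H^*(X^s;\mathbb{Q})$ is verified by expanding into simple tensors, applying the exterior relations to cancel repeated indices, and exhibiting a surviving top-degree monomial whose $\ell$-th tensor factor is supported on $J_\ell\in\mathcal{K}_X$.

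For the upper bound, I would construct a motion planner with $\N^s(X)+1$ pairwise disjoint ENR local domains, indexed by $j\in[\N^s(X)]_0$. The key combinatorial input is the estimate $|P|\leq\N^s(X)$ whenever $X^s_P\neq\emptyset$: indeed, for $(b_1,\ldots,b_s)\in X^s_P$ the supports $J_\ell=\{i:b_{i\ell}\neq e\}$ are simplices of $\mathcal{K}_X$, and a row-by-row count based on \eqref{lanormadeP}--\eqref{Pcardinality} yields $|P|\leq\sum_\ell|J_\ell|-|\bigcap_\ell J_\ell|=\N_X(J_1,\ldots,J_s)$ by Proposition~\ref{difdef}. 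The $j$-th local domain is $W_j:=\bigsqcup_{|P|=j}X^s_P$; grouping the strata this way gives exactly $\N^s(X)+1$ ENR pieces covering $X^s$. On each stratum $X^s_P$ I would define the local rule by choosing the common starting configuration of the $s$ paths to have support $\bigcap_\ell J_\ell$ (which is a simplex of $\mathcal{K}_X$ since it is contained in each $J_\ell$) and then, for every $j$, constructing the $j$-th path to $b_j$ by applying the single-sphere planner of Example~\ref{TCs S} coordinate-wise, with the antipodal pattern dictated by $P_i$. The decomposition \eqref{X^s_P} together with its disjoint-union-topology property glues these pieces into a continuous section on $X^s_P$ and, by disjointness within $W_j$, on all of $W_j$.

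The main obstacle is ensuring that the constructed paths actually stay inside the subcomplex $X$, not merely in the ambient $\mathbb{S}(k_1,\ldots,k_n)$: a naive coordinate-wise construction can fail because an intermediate configuration might have support $J_1\cup J_j$, which need not be a simplex of $\mathcal{K}_X$ even if $J_1,J_j$ are. The fix exploits that intersections (and sub-sets) of simplices are simplices. Schedule the $j$-th path in two phases: first move the coordinates in $\bigcap_\ell J_\ell$ toward their targets in $b_j$ (active support contained in $\bigcap_\ell J_\ell\subseteq J_j$), then activate the remaining coordinates of $J_j-\bigcap_\ell J_\ell$ from the basepoint using the $\nu$-semicircle rule (active support contained in $J_j\in\mathcal{K}_X$). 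A careful audit using \eqref{Pcardinality} shows the number of non-trivial sphere moves required in the $i$-th row is exactly $|P_i|-1$, so the total complexity of the local rule on $X^s_P$ is the norm $|P|$, matching the level $j$ of the stratum and producing the optimal count of $\N^s(X)+1$ domains.
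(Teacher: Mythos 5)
Your lower-bound argument matches the paper's Proposition~\ref{cotaabajoimpar}: the product you form over $\ell\ge2$ and $i\in J_\ell\cup I_\ell$ is exactly $\gamma(J_1,J_2)\cdots\gamma(J_1,\ldots,J_s)$ there, and the ``surviving top-degree monomial'' is the basis element~(\ref{eltermino}). Your choice of strata $W_j=\bigsqcup_{|P|=j}X^s_P$ and the estimate $|P|\le\N^s(X)$ also reproduce Lemma~\ref{coverodd} and Proposition~\ref{interxp}.

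The gap is in the local rule. You propose starting all $s$ paths at a configuration $c$ with support $\bigcap_\ell J_\ell$, where $J_\ell=\{i:b_{i\ell}\ne e^0\}$. But these supports are not locally constant on a stratum $X^s_P$, nor even on $X^s_{P,\beta}$: membership in $X^s_P$ records only the $G$-orbit relations among the $b_{ij}$, not which entries equal $e^0$ exactly. As some $b_{i\ell_0}$ with $\ell_0\ne1$ is pushed to $e^0$---which can happen inside $X^s_{P,\beta}$ while $b_{i1}$ stays away from $\pm e^0$---the index $i$ drops out of $\bigcap_\ell J_\ell$ and your $c_i$ jumps from $b_{i1}$ to $e^0$. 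The proposed section is therefore discontinuous, and the two-phase schedule does not repair this (you would also need the geodesic rule $\tau_1$, not the $\nu$-semicircle rule $\tau_0$, to move a coordinate from $e^0$ to a generic $b_{ij}$). The paper avoids the issue altogether: all $s$ paths start at $b_1$, and the $i$-th coordinate of each path is held fixed for an initial time $t_{b_{i1}}=\tfrac12-d(b_{i1},e^0)$ before moving. This delay is a continuous function of $b_{i1}$, forces coordinates that start at $e^0$ to stay put until $t=1/2$, and makes coordinates that must end at $e^0$ arrive there exactly at $t=1/2$. Combined with the fact that the choice between $\tau_0$ and $\tau_1$ depends only on the combinatorial pair $(P,\beta)$, this gives a continuous section on each $X^s_{P,\beta}$ whose image path lies in $\overline{e_{J_1}}$ for $t\le1/2$ and in $\overline{e_{J_j}}$ for $t\ge1/2$, hence in $X$ (Proposition~\ref{noleave}).
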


\colorao{The proof of Theorem~\ref{tcsX odd} is deferred to the next sections; here we analyze its consequences and} interesting special instances, \colorao{starting with the case} when \az{$X$} \kp{is pure.} 

\begin{corollary}\label{purebis}
\kp{Suppose all of the $k_i$ are odd and $\az{X}$ is $\az{d}$-pure. Then}
\kp{$$\TC_s(X)=sd-\min\Big|\bigcap_{i=1}^sJ_i\Big|$$
where the minimum is taken over all sets $\{J_1,\ldots,J_s\}$ of maximal simplexes of ${\mathcal{K}_{\az{X}}}$.
In particular $\TC_s(X)\leq sd$ \az{with} equality if and only if $\hspace{.4mm}\bigcap_{i=1}^sJ_i$ \az{is empty}
for some choice of \az{maximal simplexes} $J_i$\az{'s}.}
\end{corollary}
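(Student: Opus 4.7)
The plan is to derive the corollary as a direct consequence of Theorem~\ref{tcsX odd} and Proposition~\ref{difdef}, using the $d$-purity hypothesis to collapse one of the terms in the resulting formula.

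First I would invoke Theorem~\ref{tcsX odd} to replace $\TC_s(X)$ by $\N^s(X)$. By the observation stated right after Lemma~\ref{properties}, this maximum may be computed by ranging only over $s$-tuples of \emph{maximal} simplexes of $\mathcal{K}_X$, so I would restrict attention to such tuples from the outset.

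Next I would apply Proposition~\ref{difdef} to rewrite
$$\N_X(J_1,\ldots,J_s) \;=\; \sum_{i=1}^s |J_i| \;-\; \Big|\bigcap_{i=1}^s J_i\Big|.$$
Since $X$ is $d$-pure, every maximal simplex $J_i$ has cardinality $d$, so the first sum equals the constant $sd$ independently of the choice of tuple. Maximizing $\N_X(J_1,\ldots,J_s)$ over $s$-tuples of maximal simplexes is therefore equivalent to minimizing the intersection cardinality, which yields the displayed formula. The ``in particular'' clause follows at once from the non-negativity of cardinality: $\TC_s(X)\leq sd$, with equality precisely when the minimum is zero, i.e., when some tuple of maximal simplexes has empty intersection.

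The main ``obstacle'' here is essentially vacuous, since all substantive work has been packaged into Theorem~\ref{tcsX odd} and Proposition~\ref{difdef}. The only step warranting explicit mention is the reduction from arbitrary to maximal simplexes in the computation of $\N^s(X)$, which is already recorded in the text surrounding Lemma~\ref{properties} and is needed here to justify that each $|J_i|$ equals $d$.
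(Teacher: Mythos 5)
Your proposal is correct and coincides with the paper's intended derivation: the corollary is an immediate consequence of Theorem~\ref{tcsX odd}, Proposition~\ref{difdef}, the reduction of $\N^s(X)$ to maximal simplexes noted after Lemma~\ref{properties}, and the $d$-purity hypothesis forcing $\sum_i|J_i|=sd$. The paper states the corollary without explicit proof, treating it as exactly the immediate consequence you describe.
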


\roj{Corollary~\ref{purebis} implies that, for $X$ \mag{$d$-pure,} \az{$\TC_s(X)$ growths linearly on $s$ provided $s$ is large enough. More \mag{precisely}, if} \kp{$w=w({\mathcal{K}_{\az{X}}})$ denotes the number of maximal simplexes \az{in $\mathcal{K}_{X}$, then} 
\begin{equation}\label{asintotico}
\TC_s(X)=d(s-w)+\TC_w(X)
\end{equation}
for $s\geq w$. More generally we have:}}

\begin{proposition}\label{crecilineagene}
\roj{Let $w$ be as above, \colorito{and set $d=1+\dim(\mathcal{K}_X)$.} \mag{Equation~$(\ref{asintotico})$ holds for any (pure or not) subcomplex $X$ of $\,\mathbb{S}(k_1,\ldots,k_n)$ as long as $s\geq w$.}}
\end{proposition}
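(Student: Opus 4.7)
The plan is to invoke Theorem~\ref{tcsX odd} to translate the claim into the purely combinatorial identity
$$\N^{s}(X)\,=\,d(s-w)+\N^{w}(X) \quad\text{for } s\geq w.$$
I would build on the symmetric formula of Proposition~\ref{difdef}: for a sequence $J_1,\ldots,J_s$ of maximal simplexes of $\mathcal{K}_X$ with underlying set $S=\{J_1,\ldots,J_s\}$, one has $\N_X(J_1,\ldots,J_s)=\sum_{\ell}|J_\ell|-|\bigcap_{M\in S}M|$. Holding $S$ fixed, the sum is maximized by placing the $s-|S|$ surplus slots on a simplex of largest cardinality $d_S:=\max_{M\in S}|M|$, which gives
$$\N^s(X)\,=\,\max_{S}\,g_S(s),\qquad g_S(s):=\sum_{M\in S}|M|+(s-|S|)d_S-\Big|\bigcap_{M\in S}M\Big|,$$
where $S$ ranges over non-empty subsets, of cardinality at most $s$, of the set $\mathcal{M}$ of maximal simplexes of $\mathcal{K}_X$.

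For $s\geq w$ the constraint $|S|\leq s$ is vacuous and the factorization $g_S(s)=g_S(w)+(s-w)\,d_S$ holds; combined with $d_S\leq d$, this yields the upper bound $\N^s(X)\leq \N^w(X)+(s-w)d$ at once.

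The main obstacle will be the matching lower bound, which requires \emph{one} subset $S$ that simultaneously realizes the maximum $\N^w(X)$ \emph{and} satisfies $d_S=d$. I would start from any $S_0$ with $g_{S_0}(w)=\N^w(X)$; if $d_{S_0}<d$, I would enlarge $S_0$ by adjoining a maximal simplex $N$ of cardinality $d$ (such an $N$ exists because $d=1+\dim\mathcal{K}_X$, and automatically $N\notin S_0$ since otherwise $d_{S_0}\geq d$). A short bookkeeping argument then shows $g_{S_0\cup\{N\}}(w)-g_{S_0}(w)$ to be a sum of two manifestly non-negative terms (one coming from $d-d_{S_0}\geq 0$, the other from $|\bigcap S_0|-|\bigcap(S_0\cup\{N\})|\geq 0$), so maximality of $g_{S_0}(w)$ forces equality while $d_{S_0\cup\{N\}}$ is upgraded to $d$. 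Feeding the enlarged set into $g_S(s)=g_S(w)+(s-w)d_S$ produces a sequence witnessing $\N^s(X)\geq \N^w(X)+(s-w)d$, completing the argument.
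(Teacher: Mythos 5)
Your argument is correct and arrives at the same conclusion, but the mechanism for producing an optimal witness with a full-dimensional member is organized differently from the paper's. The paper first proves Lemma~\ref{auxcrecilineagene}: any $w$-tuple of \emph{maximal} simplexes realizing $\N^w(X)$ already contains one of cardinality $d$, the proof being by contradiction --- if all entries have cardinality $<d$, there are at most $w-1$ distinct possibilities, so two entries coincide (this pigeonhole step is only sound because the tuple is assumed to consist of maximal simplexes, a hypothesis that should be made explicit in the lemma's statement), and swapping one repeated entry for a cardinality-$d$ simplex strictly increases $\N_X$, contradicting optimality. You instead parametrize by the \emph{underlying set} $S\subseteq\mathcal{M}$ via the auxiliary function $g_S$, prove the clean factorization $g_S(s)=g_S(w)+(s-w)d_S$, and obtain a good witness by \emph{enlargement} rather than swapping: adjoining a cardinality-$d$ maximal simplex $N$ to an optimizing $S_0$ changes $g$ by $(w-|S_0|)(d-d_{S_0})+\big(|\bigcap S_0|-|\bigcap(S_0\cup\{N\})|\big)\geq 0$, so $S_0\cup\{N\}$ is also optimal and has $d_{S_0\cup\{N\}}=d$. (Note you do need $|S_0\cup\{N\}|\leq w$ so that $g_{S_0\cup\{N\}}(w)$ competes in the defining maximum for $\N^w(X)$; this holds because $N\notin S_0\subseteq\mathcal{M}$ and $|\mathcal{M}|=w$.) Your enlargement variant is slightly tidier: it avoids the pigeonhole count and the implicit maximality hypothesis, and the non-negativity of the increment $g_{S_0\cup\{N\}}(w)-g_{S_0}(w)$ is transparent. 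The trade-off is that the paper's Lemma~\ref{auxcrecilineagene} is an independently quotable structural fact about optimal tuples, whereas your construction is tailored to the one place it is needed.
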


\mag{The proof of Proposition~\ref{crecilineagene} uses the following auxiliary result:}

\begin{lema}\label{auxcrecilineagene}
\mag{In the setting of Proposition~$\ref{crecilineagene}$, if $J_1,\ldots,J_{w}$ are simplexes of $\mathcal{K}_X$ such that $\TC_{w}(X)=\sum_{i=1}^{w}|J_i|-\Big|\bigcap_{i=1}^{w}J_i\Big|$, then
$
\max\{\,|J_i|\;|\;i\in[w]\}=\colorito{d}.
$
}\end{lema}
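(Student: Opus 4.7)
The plan is to translate the hypothesis into an optimality statement for $\N_{\mathcal{K}_X}$ and then force the conclusion by a pigeonhole-and-swap argument on maximal simplices. Combining Theorem~\ref{tcsX odd} with Proposition~\ref{difdef}, the hypothesis $\TC_w(X)=\sum_{i=1}^w|J_i|-\bigl|\bigcap_{i=1}^w J_i\bigr|$ is equivalent to $\N_{\mathcal{K}_X}(J_1,\ldots,J_w)=\N^w(X)$, i.e.\ the tuple $(J_1,\ldots,J_w)$ is a maximizer of $\N_{\mathcal{K}_X}$. Using the monotonicity noted right above Lemma~\ref{properties}, I would first reduce to the case where every $J_i$ is a maximal simplex of $\mathcal{K}_X$: enlarging each $J_i$ to a maximal simplex preserves optimality (since $\N^w(X)$ caps $\N_{\mathcal{K}_X}$ from above and monotonicity can only raise the value) and does not decrease $\max|J_i|$.

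Under that reduction, argue by contradiction: suppose $|J_i|<d$ for every $i\in[w]$. Since $d=1+\dim\mathcal{K}_X$, there exists a maximal simplex $M\in\mathcal{K}_X$ with $|M|=d$, and the size restriction forces $M\notin\{J_1,\ldots,J_w\}$. As $\mathcal{K}_X$ has exactly $w$ maximal simplices, the $w$-term list $(J_1,\ldots,J_w)$---all maximal and all different from $M$---cannot be injective; hence some $J_a=J_b$ with $a\neq b$.

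The final step is a swap: replace $J_a$ by $M$ in the tuple and call the result $(J_1',\ldots,J_w')$. The sum of cardinalities strictly increases by $d-|J_a|\geq 1$, while the total intersection can only shrink. Indeed, because $J_b=J_a$ still appears among the remaining entries, one has $\bigcap_{i=1}^w J_i=\bigcap_{i\neq a}J_i$, so
\[
\bigcap_{i=1}^w J_i'\;=\;M\cap\bigcap_{i\neq a}J_i\;\subseteq\;\bigcap_{i=1}^w J_i,
\]
giving $\bigl|\bigcap J_i'\bigr|\leq\bigl|\bigcap J_i\bigr|$. Via Proposition~\ref{difdef}, the two estimates combine into $\N_{\mathcal{K}_X}(J_1',\ldots,J_w')>\N_{\mathcal{K}_X}(J_1,\ldots,J_w)=\N^w(X)$, contradicting the maximality of $\N^w(X)$.

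The main obstacle is to control the intersection term under the swap: a generic replacement of one maximal simplex by another can enlarge $\bigcap J_i$ and thereby defeat the strict increase in $\N$. The pigeonhole-forced repetition $J_a=J_b$ is precisely what makes the intersection rigid under the deletion of $J_a$, so that reintroducing $M$ in its place can only shrink it. This is the small combinatorial twist that drives the whole argument.
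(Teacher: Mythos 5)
Your argument is essentially the same as the paper's: both proofs proceed by contradiction, choose a maximum-size simplex, locate a repeated entry, and perform the same replacement, with the same observation that the repetition keeps the intersection rigid so the replacement can only shrink it while strictly increasing the sum. Where you add value is in making explicit that the existence of a repeated pair $J_a=J_b$ comes from the pigeonhole principle applied to the (at most $w-1$) maximal simplexes of size $<d$; the paper simply asserts the existence of indices $i_1<i_2$ with $J_{i_1}=J_{i_2}$ without justification, and that assertion is precisely the step that requires the $J_i$ to be maximal.

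However, your ``reduction to the maximal case'' does not work as stated, and in fact cannot be made to work, because the lemma as literally written (for arbitrary simplexes) is false. If you enlarge each $J_i$ to a maximal $\tilde J_i\supseteq J_i$, you preserve optimality and you can conclude $\max|\tilde J_i|=d$; but since $|J_i|\leq|\tilde J_i|$, this only gives $\max|J_i|\leq d$, which is trivial, not the wanted $\max|J_i|\geq d$. And there really is no way around this: take $n=4$, all $k_i$ odd, and $\mathcal{K}_X$ with two maximal simplexes $K_1=\{1,2,3\}$ and $K_2=\{1,2,4\}$, so $w=2$ and $d=3$. Then $\TC_2(X)=\N^2(X)=4$, and the (non-maximal) simplexes $J_1=\{1,3\}$, $J_2=\{2,4\}$ satisfy $|J_1|+|J_2|-|J_1\cap J_2|=4=\TC_2(X)$ while $\max\{|J_1|,|J_2|\}=2<3=d$. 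So the hypothesis of the lemma should be read as requiring the $J_i$ to be \emph{maximal} simplexes (this is how the lemma is invoked in the proof of Proposition~\ref{crecilineagene}, where the $J_i$ are explicitly chosen maximal). Once you drop the reduction paragraph and simply state the maximality hypothesis, your proof is complete and coincides with the paper's, with the bonus of supplying the missing pigeonhole justification.
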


\begin{proof}
\mag{\colorito{Assume} for a contradiction that $J_1,\ldots,J_{w}$ are simplexes of $\mathcal{K}_X$ such that $\TC_{w}(X)=\sum_{i=1}^{w}|J_i|-\left|\bigcap_{i=1}^{w}J_i\right|$ with $|J_i|<d$ for all $i\in[w]$. Choose a simplex $J_0$ of $\mathcal{K}_X$ with $|J_0|=d$, and indexes $i_1,i_2\in[w]$, $i_1<i_2$, with $J_{i_1}=J_{i_2}$. Set
$$
(J'_1,\ldots,J'_w):=(J_0,J_1,\ldots,J_{i_1-1},J_{i_1+1},\ldots,J_w).
$$
The contradiction comes from
$$
\N_X(J'_1,\ldots,J'_w)=\sum_{i=1}^w
|J'_i|-\Big|\bigcap_{i=1}^wJ'_i\Big|>\sum_{i=1}^w
|J_i|-\Big|\bigcap_{i=1}^wJ'_i\Big| 
\geq\sum_{i=1}^w
|J_i|-\Big|\bigcap_{i=1}^wJ_i\Big|=\TC_w(X)
$$
where the last inequality holds because $\bigcap_{i=1}^wJ'_i\subseteq\bigcap_{i=2}^wJ'_i=\bigcap_{i=1}^wJ_i$.}
\end{proof}


\begin{proof}[\roj{Proof of Proposition~$\ref{crecilineagene}$}] \roj{\mag{Let $s\geq w$.} Choose \colorao{maximal} simplexes $J'_1,\ldots,J'_s$ \mag{and $J_1,\ldots,J_w$} of $\mathcal{K}_X$ with $$\N^s(X)=\sum_{i=1}^s|J'_i|-\Big|\bigcap_{i=1}^sJ'_i\Big| \;\;\;\;\mag{\mbox{and}\;\;\;\,\N^w(X)=\sum_{i=1}^w|J_i|-\Big|\bigcap_{i=1}^wJ_i\Big|.}$$ Assume without loss of generality (since $s\geq w$) that $\{J'_1,\ldots,J'_s\}=\{J'_1,\ldots,J'_w\}$. Then
\begin{eqnarray*}
\TC_s(X)&=&\sum_{i=1}^{s}|J'_i|\,-\,\Big|\bigcap_{i=1}^{s}J'_i\Big|\;\;=\;\;\sum_{i=1}^{w}|J'_i|\,+\sum_{i=w+1}^{s}|J'_i|\,-\,\Big|\bigcap_{i=1}^{w}J'_i\Big|\\
&\leq&\TC_w(X)\,+\,\sum_{i=w+1}^{s}|J'_i|\;\;\leq\;\;\TC_w(X)\,+\,(s-w)d
\end{eqnarray*}
where, \colorito{as before,} $d=1+\dim(\mathcal{K}_{\colorito{X}})$. On the other hand, \mag{Lemma~\ref{auxcrecilineagene}} yields an integer $i_0\in[w]$ with $|J_{i_0}|=d$. Set $J_j:=J_{i_0}$ for $w+1\leq j\leq s$. Then
$$
\TC_w(X)+(s-w)d=\sum_{i=1}^{w}|J_i|\,-\,\Big|\bigcap_{i=1}^{w}J_i\Big|\,+\sum_{i=w+1}^{s}|J_i|=\sum_{i=1}^{s}|J_i|\,-\,\Big|\bigcap_{i=1}^{s}J_i\Big|
\leq\TC_s(X),
$$
completing the proof.}
\end{proof}

\mag{A more precise description of $\TC_s(X)$ can be obtained by imposing  condition\az{s} on $X$ which are stronger than purity.} \kp{\az{For instance, let $\mathbb{S}(k_1, \ldots, k_n)^{(d)}$ stand for the $d$-pure subcomplex of $\mathbb{S}(k_1, \ldots, k_n)$ with index} $\Delta[n-1]^{d-1}$, the $(d-1)$-skeleton of the full simplicial complex on $n$ vertices.} For instance, when $k_i=1$ for all $i\in[n]$, $\mathbb{S}(k_1, \ldots, k_n)^{(d)}$ is the $d$-dimensional skeleton in the minimal CW structure of the $n$-torus---the $n$-fold Cartesian product of $S^1$ with itself.

\begin{corollary}\label{sergey}
\rojo{If all of the $k_i$ are odd,} then $\TC_s\left(\mathbb{S}(k_1, \ldots, k_n)^{\rojo{(d)}}\right)=\min \{sd, (s-1)n\}$.
\end{corollary}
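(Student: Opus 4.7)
The plan is to reduce the assertion to a purely combinatorial extremal problem about intersections of $d$-subsets of $[n]$, and solve it directly.

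First I would observe that the space $X=\mathbb{S}(k_1,\ldots,k_n)^{(d)}$ is $d$-pure in the sense of the paper: by definition, its index $\mathcal{K}_X=\Delta[n-1]^{d-1}$ is the $(d-1)$-skeleton of the full simplex on $n$ vertices, so every maximal simplex of $\mathcal{K}_X$ is precisely a $d$-element subset of $[n]$. Since all $k_i$ are odd, Corollary~\ref{purebis} applies and gives
$$\TC_s(X)=sd-\min\Big|\bigcap_{i=1}^s J_i\Big|,$$
where $J_1,\ldots,J_s$ range over $d$-element subsets of $[n]$. The corollary is therefore reduced to showing
$$\min\Big|\bigcap_{i=1}^s J_i\Big|=\max\{\,0,\;sd-(s-1)n\,\}.$$

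For the lower bound I would pass to complements. Writing $J_i^{c}=[n]\setminus J_i$, each of size $n-d$, one has
$$\Big|\bigcap_{i=1}^sJ_i\Big|=n-\Big|\bigcup_{i=1}^sJ_i^{c}\Big|\geq n-\min\{\,n,\,s(n-d)\,\}=\max\{\,0,\;sd-(s-1)n\,\},$$
by the elementary bound $|\bigcup J_i^{c}|\leq\min\{n,\sum|J_i^{c}|\}$.

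For the matching upper bound I would exhibit explicit $J_i$ in each of the two regimes. If $s(n-d)\leq n$ (equivalently $sd\geq(s-1)n$), choose the complements $J_1^{c},\ldots,J_s^{c}$ pairwise disjoint inside $[n]$ (possible since their total cardinality $s(n-d)$ fits in $[n]$); then $|\bigcup J_i^{c}|=s(n-d)$ and $|\bigcap J_i|=sd-(s-1)n$. If instead $s(n-d)\geq n$, choose the $J_i^{c}$ so that their union is all of $[n]$ (again possible by the cardinality condition, filling each $J_i^{c}$ out arbitrarily to size $n-d$); then $\bigcap J_i=\emptyset$. Plugging back into Corollary~\ref{purebis} gives $\TC_s(X)=\min\{sd,(s-1)n\}$, as desired.

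I do not expect a genuine obstacle here: the only non-routine point is the explicit realization of the extremal configurations of complements in each regime, which is a direct pigeonhole/partition construction. The boundary case $d=n$ (where $X=\mathbb{S}(k_1,\ldots,k_n)$ itself) is covered automatically because then the only candidate is $J_i=[n]$, yielding intersection of size $n$ and $\TC_s(X)=(s-1)n$, consistent with the formula.
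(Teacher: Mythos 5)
Your proof is correct and takes essentially the same route as the paper: both establish the inequality $\N_X(J_1,\ldots,J_s)\leq\min\{sd,(s-1)n\}$ (you via a union-of-complements bound, the paper via Lemma~\ref{properties} and $|I_\ell|+|J_\ell|\leq n$, which amount to the same thing), and both realize the extremal value by choosing the complements $J_k^c$ to be pairwise disjoint when $s(n-d)\leq n$ and to cover $[n]$ when $s(n-d)\geq n$. The computation reduced via Corollary~\ref{purebis} is the same in both cases.
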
 

In view of Hattori's theorem (\cite{hattori}, see also \cite[Theorem 5.21]{OT}), Corollary~\ref{sergey} specializes, with $k_i=1$ for all $i\in[n]$, to the assertion in~\cite[page~8]{sergeypreprint} describing the higher topological complexity of complements of complex hyperplane arrangements that are either linear generic, or affine in general position (cf.~\cite[Section~3]{Yu07}). It is also interesting to highlight that the ``min'' part in Corollary~\ref{sergey} (with $d=1$) can be thought of as a manifestation of the fact that, while the $s$-th topological complexity of an odd sphere is $s-1$, wedges of at least two spheres have $\TC_s=s$ ---just as any other nilpotent suspension space which is neither contractible nor homotopy equivalent to an odd sphere (\cite{minTC}). In addition, the ``min'' part in Corollary~\ref{sergey} detects a phenomenon not seen in terms of the Lusternik-Schnirelmann category since, as \colorao{indicated in Remark~\ref{downtocat} at the end of the paper, $\cat(\mathbb{S}(k_1, \ldots, k_n)^{(d)})=d$.}

\begin{proof}[Proof of Corollary~$\ref{sergey}$]
\az{Let $X$ stand for $\mathbb{S}(k_1,\ldots,k_n)^{\colorao{(d)}}$. For simplexes $J_1,\ldots,J_s$ of $\Delta[n-1]^{d-1}$, the} \kp{inequality $\N_X(J_1, \ldots, J_s)\leq \min\{sd,(s-1)n\}$ follows from 
Corollary~\ref{purebis} and Lemma \ref{properties} since $| I_{\ell}|+| J_{\ell}|\leq n$. \az{Thus $\TC_s\az{(X)})\leq\min\{sd,(s-1)n\}$} \roji{(notice \az{this holds for any $d$-pure} $X$).} \kp{To prove the opposite inequality suppose \az{first} that $sd\leq (s-1)n$, equivalently $n\leq s(n-d)$. Then
there exist a covering $\{C_1\ldots,C_s\}$ \az{of $[n]$} with $| C_k|=n-d$ for every $k\in[s]$. Put $J_k=[n]-C_k$ and 
notice that $J_k$ is a maximal simplex of $\az{\Delta[n-1]^{d-1}}$ for every $k$. \az{Further} $\bigcap_{k=1}^sJ_k=\emptyset$, \az{so that Corollary~\ref{purebis} yields} $$\TC_s\az{(X)}=sd=\az{\min\{sd,(s-1)n}\}.$$
Finally assume that $(s-1)n\leq sd$, i.e., $s(n-d)\leq n$. Then there exists a collection $\{C_1\ldots,C_s\}$ of mutually disjoint \az{subsets of $[n]$} with $| C_k|=n-d$ for every $k$. Put again $J_k=[n]-C_k$. We have
$$
\TC_s\az{(X)}\geq\sum_{\az{k=1}}^{\az{s}}|J_k|-\Big|\bigcap_{\az{k=1}}^{\az{s}}J_k\Big| =
sn-\sum_{\az{k=1}}^{\az{s}}|C_k|-\Big|\bigcap_{\az{k=1}}^{\az{s}}J_k\Big|=sn-\sum_{\az{k=1}}^{\az{s}}|C_k|-n+\Big|\bigcup_{\az{k=1}}^{\az{s}}C_k\Big|.
$$
\az{The result follows since the latter term simplifies to $(s-1)n=\min\{sd,(s-1)n\}$.}}}
\end{proof}

\magenta{\rojo{The higher topological complexity of a subcomplex $X$ of $\mathbb{S}(k_1,\ldots,k_n)$ whose index is \kp{pure but} not a skeleton depends heavily on the combinatorics of $\mathcal{K}_X$ ---and not just on its dimension. To illustrate the situation, we offer the following example.}}

\begin{ejem}{\em
\kp{Suppose the parameters are $n=4,\ d=2, \ s=3$;  $\mathcal{K}_1$ has the set of maximal simplexes $\{\{1,2\},\{2,3\},\{3,4\}\}$ while $\mathcal{K}_2$ the set $\{\{1,2\},\{1,\az{3}\},\{1,4\}\}$. \az{Fix positive odd integers $k_1,k_2,k_3,k_4$, and let} $X_i$ ($i=1,2$) be \az{the} CW \az{sub}complex of $\az{\mathbb{S}}(k_1,k_2,k_3,k_4)$ having $\mathcal{K}_i$ as its index. Then \az{Corollary~\ref{purebis}} gives $\TC_3(X_1)=6$
while $\TC_3(X_2)=5$.}}
\end{ejem}

\mag{Interesting phenomena can arise if $X$ is not pure.} \kp{This can be demonstrated by the following example\colorao{s}:}

\begin{ejem}{\em \colorao{
Take $s=n$. For $i\in [n]$, let $K_i=[n]-\{i\}$, and for $I \subseteq [n]$ , let $$W_I=\mathbb{S}(k_1, \ldots, k_n)^{(n-1)}-\bigcup_{i\in I} e_{K_i},$$ the subcomplex obtained from the fat wedge after removing the facets corresponding to vertices $i\in I$. As before, we assume that all of the $k_i$ are odd. Note that $W_I$ is $(n-1)$-pure if $|I|\leq1$, in which case Corollary~\ref{purebis} gives
\begin{equation}\label{purebutsamedim}
\TC_{\rojo{n}}(W_I)=n(n-1)-|I|.
\end{equation}
But the situation is slightly subtler when $2\leq|I|<n$ because, although the corresponding $W_I$ all have the same dimension, they fail to be pure, in fact:\begin{equation}\label{nonpurebutsamedim}
\TC_{n}(W_I)= \begin{cases}
n(n-1)-(\delta+1), & \text{if} \,\, |I|=2\delta+1 ; \\
n(n-1)-\delta, & \text{if} \,\, |I|=2\delta .
\end{cases}
\end{equation}
Note however that, by Corollary~\ref{sergey}, once all maximal simplexes have been removed from the fat wedge, we find the rather smaller value $\TC_n(W_{[n]})=n(n-2)$, back in accordance to~(\ref{purebutsamedim}). The straightforward counting argument verifying~(\ref{nonpurebutsamedim}) is left as an exercise for the interested reader; we just hint to the fact that the set of maximal simplexes of $\mathcal{K}_{W_I}$ is} $$\colorao{\left\{\,K_i \;|\; i \notin I\,\right\} \cup 
\left\{\,J \;|\; [n]-J \subseteq I \hspace{1.6mm}\mbox{and}\hspace{1.3mm} |J|= n-2 \,\right\}.}$$}
\end{ejem}

\begin{ejem}\label{allpossibilities}{\em 
\kp{Let $c_1>c_2$ be positive integers and $n=c_1+c_2$. Consider the simplicial complex $\mathcal{K}=\mathcal{K}^{c_1,c_2}$ \az{with vertices $[n]$} determined by two disjoint maximal simplexes $K_1$ and $K_2$ with $|K_1|=c_1$ and $|K_2|=c_2$. \az{Then, for any collection $J_1,\ldots,J_s$ of maximal simplexes of $\mathcal{K}$, where} precisely $s_1$ sets among $J_1,\ldots,J_s$ are equal to $K_1$ with \az{$0\leq s_1\leq s$, Proposition~\ref{difdef} yields}
$$
\N_{\az{\mathcal{K}}}(J_1,\ldots,J_s)=
\begin{cases}
(s-1)c_2, & s_1=0;\\ 
\kp{s_1c_1+\az{(s-s_1)}c_2}, & \kp{0<s_1<s;}\\
(s-1)c_1, & s_1=s.
\end{cases}
$$ 
This function of $s_1$ reaches its largest value when $s_1=s-1$ whence $\N^{\az{s}}({\mathcal{K}})=(s-1)c_1+c_2=sc_1-(c_1-c_2)$. \az{The latter} formula shows that, \colorito{as} $c_1-c_2$ runs through the integers $1,2,\ldots,c_1-1$, $\N^{\az{s}}({\mathcal{K}})$ runs through $sc_1-1,sc_1-2,\dots, (s-1)c_1+1$. \colorito{Whence,} due to Theorem~\ref{tcsX odd}, the same is true for  $\TC_s(X)$  where $X=X_{\az{c_1,c_2}}$ \az{is the} subcomplex of \az{some} $\az{\mathbb{S}}(k_1,\ldots,k_n)$ (with all $k_i$ odd) whose index equal\az{s} ${\mathcal{K}}$.}}
\end{ejem}

\begin{remark}{\em \colorito{The previous example} should be compared with the fact (proved in~\cite[Corollary~3.3]{bgrt}) that the $s$-th topological complexity of a given path connected space $X$ is bounded by $\cat(X^{s-1})$ from below, and by $\cat(X^s)$ from above.
\colorito{Example~\ref{allpossibilities} implies} that not only can both bounds be attained (with Hopf spaces in the former case, and with closed simply connected symplectic manifold in the latter) but any possibility in between \kp{can occur}. Indeed, \colorao{as indicated in Remark~\ref{downtocat} at the end of the paper,}
$\cat(X_{\az{c_1,c_2}}^p)=pc_1$ for every \az{positive integer $p$.}}
\end{remark}

\subsection{Proof of Theorem~\ref{tcsX odd}: the upper bound}\label{suboddmotion}
\rojo{The inequality $\N^{\rojo{s}}(X)\leq\TC_s(X)$ will be dealt with in Section~\ref{sectres} using cohomological methods; this subsection is devoted to establishing} the inequality $\TC_s(X)\leq \N^{\rojo{s}}(X)$ \rojo{by proving that the domains 
\az{\begin{equation}\label{unionxp}
D_j :=\bigcup X^s_{P},\quad j\in[\N^{\rojo{s}}(X)]_0,
\end{equation}
\rojo{where the union runs over those
$P \in \mathcal{P}$ with $|P|=j$ as 
defined in~(\ref{lanormadeP}),}}
give a cover of $X^s$ by pairwise disjoint ENR subspaces each of which admits a local rule---a section for $e_s$.}

\medskip
It is \rojo{easy} to see that \rojo{the $D_j$'s are pairwise disjoint. On the other hand, it follows from Proposition~\ref{interxp} below that~(\ref{unionxp}) is a topological disjoint union, so that~\cite[Proposition~IV.8.10]{Dold} and the obvious fact that each $X^s_P$ is an ENR imply the corresponding assertion for each $D_j$.}

\begin{lema}\label{coverodd}
$$ 
X^s = \bigcup_{j=0}^{\N^{\rojo{s}}(X)} D_j.
$$
\end{lema}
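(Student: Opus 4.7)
The plan is to take an arbitrary point $(b_1,\ldots,b_s)\in X^s$ and to locate it inside some $D_j$ with $j\leq\N^s(X)$. First I would exhibit a canonical $P\in\mathcal{P}$ for which $(b_1,\ldots,b_s)\in X^s_P$: for each $i\in[n]$, take $P_i$ to be the partition of $[s]$ induced by the equivalence relation $k\sim_i\ell\Leftrightarrow b_{ik}=\pm b_{i\ell}$ (this is genuinely an equivalence because the antipodal $G=\mathbb{Z}_2$-action on each $S^{k_i}$ is involutive), reordered so that the smallest indices of the parts increase. By construction $(b_1,\ldots,b_s)\in X^s_P$, so the problem reduces to bounding $|P|$.

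The bound comes from the cellular structure of $X$. Since $X$ is a subcomplex of $\mathbb{S}(k_1,\ldots,k_n)$, each column $b_j=(b_{1j},\ldots,b_{nj})$ lies in an open cell $e_{J_j}$ of $X$, where $J_j=\{i\in[n]\mid b_{ij}\neq e^0\}$; in particular each $J_j$ is a simplex of $\mathcal{K}_X$. Setting $S_i=\{j\in[s]\mid b_{ij}\neq e^0\}$, double counting gives $\sum_{\ell=1}^s|J_\ell|=\sum_{i=1}^n|S_i|$ and $|\bigcap_{\ell=1}^sJ_\ell|=\#\{i\in[n]:S_i=[s]\}$, so Proposition~\ref{difdef} yields
\[
\N_X(J_1,\ldots,J_s)=\sum_{i=1}^n\bigl(|S_i|-\delta_i\bigr)\leq \N^s(X),
\]
where $\delta_i=1$ if $S_i=[s]$ and $\delta_i=0$ otherwise. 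It therefore suffices to verify the pointwise inequality $n(P_i)-1\leq|S_i|-\delta_i$ for every $i\in[n]$ and sum.

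This pointwise inequality is the only step that uses more than bookkeeping, and it is where I expect the most care is needed. Using~(\ref{Pcardinality}) to identify $n(P_i)$ with the number of distinct $G$-orbits among $\{b_{ij}\}_{j\in[s]}$, the case $S_i=[s]$ is immediate since then $n(P_i)\leq s=|S_i|$. In the case $S_i\neq[s]$ the indices $j\notin S_i$ all feed into the single $G$-orbit $\{e^0,-e^0\}$, while the $|S_i|$ remaining columns contribute at most $|S_i|$ additional $G$-orbits, giving $n(P_i)\leq |S_i|+1$. The subtle point to keep in mind is that $-e^0$ already belongs to the orbit of the basepoint, so a column with $b_{ij}=-e^0$ (where $j\in S_i$) does not force a new orbit—but this only helps, since we are estimating $n(P_i)$ from above. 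Combining the two cases with the respective $\delta_i$ adjustment yields $n(P_i)-1\leq |S_i|-\delta_i$, and summing over $i$ completes the proof.
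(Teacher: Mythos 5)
Your proof is correct, and it organizes the computation differently from the paper's. Both arguments reduce to showing that, for $(b_1,\ldots,b_s)$ of type $P$ lying in the cell $e_{J_1}\times\cdots\times e_{J_s}$, one has $|P|\leq\N_X(J_1,\ldots,J_s)$. The paper works \emph{column by column}: it first rewrites $|P|=\sum_i(|P_i|-1)$ as the column-indexed sum $\sum_{\ell=2}^s\bigl|\{i\,:\,b_{it}\neq\pm b_{i\ell}\ \text{for all } t<\ell\}\bigr|$ (each new $G$-orbit gets counted at the column where it first appears), and then bounds each summand by $|I_\ell|+|J_\ell|$, matching the asymmetric definition~(\ref{nx}) of $\N_X$ term by term. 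You instead work \emph{row by row}: you establish the pointwise estimate $n(P_i)-1\leq|S_i|-\delta_i$ for each $i\in[n]$, and then use a clean double-counting identity to recognize $\sum_i(|S_i|-\delta_i)$ as the symmetric expression $\sum_\ell|J_\ell|-|\bigcap_\ell J_\ell|$ of Proposition~\ref{difdef}. The key input in both cases is the same elementary observation---that the orbit $\{e^0,-e^0\}$ absorbs all indices $j$ with $b_{ij}=e^0$---but you deploy it as a per-row cardinality bound, whereas the paper deploys it as a per-column containment $\{i:\dots\}\subseteq(\bigcap_{t<\ell}J_t)\cup J_\ell$. Your version has the advantage of making direct contact with the symmetric formula~(\ref{sy}), which the paper proves anyway but does not exploit in this particular proof; it also avoids the re-indexing step~(\ref{interpretacion1}) entirely. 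Both arguments are essentially the same length, so this is a matter of taste rather than a simplification, but the double-counting step does make the appearance of $\N_X$ feel less ad hoc.
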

\begin{proof}
\rojo{Let $b\in X^s$,} say $b=(b_1, \ldots, b_s)\in e_{J_1} \times e_{J_2} \times \cdots \times e_{J_s} \az{{}\subseteq{}} X^s$, where $J_j \az{{}\subseteq{}} \rojo{[n]}$ for all \rojo{$j\in[s]$.} \colorito{Recall $G=\mathbb{Z}_2$ which acts antipodally on each sphere $S^{k_i}$.} \rojo{Note that} $$\sum_{i=1}^n \left|\left\{G\cdot b_{ij}\,\, |\,\, \rojo{j\in[2]}\right\}\right|-n= \left|\left\{\rojo{i\in[n]} \,\,|\,\, b_{i1} \neq \pm b_{i2}\right\}\right| \leq |J_1-J_2|+ |J_2|$$ \rojo{where the last inequality holds} since $\{\rojo{i\in[n]} \,\,|\,\, b_{i1} \neq \pm b_{i2} \} \subseteq J_1 \cup J_2$. \rojo{More generally,}
\begin{equation}\label{interpretacion1}
\sum_{i=1}^n |\{G\cdot b_{ij}\,\, |\,\, \rojo{j\in[s]}\}|-n= \sum_{\ell=2}^s |\{\rojo{i\in[n]} \,\,|\,\, b_{i\rojo{t}} \neq \pm b_{i\rojo{\ell}}\, \text{ for all } \, 1\leq t < \ell \}|
\end{equation}
where, for each $2 \leq \ell \leq s$,
\begin{equation}\label{interpretacion2}
|\{\rojo{i\in[n]}\,\, |\,\, b_{i\rojo{t}} \neq \pm b_{i\rojo{\ell}}\, \,\, \text{for all}\,\,  1\leq t < \ell \}| \leq | \bigcap_{\rojo{t}=1}^{\ell-1} J_{\rojo{t}} - J_{\ell}|+ |J_{\ell}|
\end{equation}
since \rojo{in fact} $$\{\rojo{i\in[n]}\,\, |\,\, b_{i\rojo{t}} \neq \pm b_{i\rojo{\ell}} \, \text{ for all }\, 1\leq t < \ell \} \subseteq \Big(\bigcap_{\rojo{t}=1}^{\ell-1} J_{\rojo{t}} \Big) \cup J_{\ell}.$$ Therefore,  \rojo{if $P=(P_1,\ldots,P_n)\in\mathcal{P}$ is the type of $b$, and we set $j=|P|$, then $b\in X^s_P\subseteq D_j$ where the inequality $j\leq\N^{s}(X)$ holds in view of~(\ref{Pcardinality}), (\ref{interpretacion1}), and~(\ref{interpretacion2}).}
\end{proof}


\rojo{Next,} in order to \rojo{construct a (well defined and continuous)} local section of $e_s$ over each $D_j$, \rojo{$j\in[\N^s(X)]$, we prove that~(\ref{unionxp}) is a topological disjoint union.}

\begin{proposition}\label{interxp}
\rojo{For any pair of elements} $P, P' \in \mathcal{P}$ with $|P|=|P'|$ and $P \neq P'$ \rojo{we have}
\begin{equation}\label{inter}
\overline{X^s_P} \cap X^s_{P'}= \emptyset = X^s_{P} \cap \overline{X^s_{P'}}.
\end{equation}
\end{proposition}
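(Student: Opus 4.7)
The plan is to show that if a sequence in $X^s_P$ converges to a point in $X^s_{P'}$, then the partition data force $P=P'$, contradicting the hypothesis. The key observation is that, for each $i\in[n]$, the relation ``$b_{ik}=\pm b_{i\ell}$'' cuts out a closed subset of $X^s$. Indeed, it can be written as the union of the two closed loci $\{b_{ik}=b_{i\ell}\}$ and $\{b_{ik}=-b_{i\ell}\}$, or equivalently realized as the preimage of the (closed) diagonal under the continuous projection $X^s\to S^{k_i}\times S^{k_i}\to\mathbb{RP}^{k_i}\times\mathbb{RP}^{k_i}$.

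By the symmetry in~(\ref{inter}) it suffices to prove $\overline{X^s_P}\cap X^s_{P'}=\emptyset$. Suppose for contradiction that $b\in X^s_{P'}$ is a limit of points $b^{(m)}\in X^s_P$. Fix $i\in[n]$ and indices $k,\ell\in[s]$ lying in a common part of $P_i$. Then $b^{(m)}_{ik}=\pm b^{(m)}_{i\ell}$ for every $m$, and by the closedness observation the same relation persists in the limit: $b_{ik}=\pm b_{i\ell}$. Since $b\in X^s_{P'}$, this forces $k$ and $\ell$ to lie in a common part of $P'_i$. Consequently every part of $P_i$ is contained in some part of $P'_i$, i.e., $P_i$ is a refinement of $P'_i$, and in particular $|P'_i|\leq|P_i|$.

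Summing over $i\in[n]$ and invoking~(\ref{lanormadeP}) yields $|P'|\leq|P|$. The hypothesis $|P|=|P'|$ then forces $|P'_i|=|P_i|$ for every $i$, which, combined with the refinement relation, upgrades to $P_i=P'_i$ for every $i$. This contradicts $P\neq P'$ and completes the proof of the first equality in~(\ref{inter}); the second is obtained by swapping the roles of $P$ and $P'$. The only conceptual ingredient is the closedness of the ``same $G$-orbit'' locus; the remainder is routine combinatorial bookkeeping, so I do not anticipate a serious obstacle.
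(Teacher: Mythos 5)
Your proof is correct and follows a genuinely different, cleaner route than the paper's. The paper handles the case where some $|P_{j_1}|\neq|P'_{j_1}|$ essentially as you do (the relation $b_{ik}=\pm b_{i\ell}$ is closed, so orbit counts cannot increase in the limit), declaring it ``obvious,'' but when $|P_i|=|P'_i|$ for all $i$ it launches into a detailed combinatorial case analysis of the ordered parts, comparing the minima $L(\alpha_t)$ versus $L(\alpha'_t)$ across two sub-cases and exhibiting explicit pairs of column indices that separate the strata. Your refinement argument---closedness of $\{b_{ik}=\pm b_{i\ell}\}$ forces each $P_i$ to refine $P'_i$, hence $|P'|\leq|P|$, and equality of norms together with the refinement relation then forces $P=P'$---dispenses with that bookkeeping entirely and treats both regimes uniformly, with the second equality in~(\ref{inter}) coming for free from the symmetry of the hypothesis. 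What the paper's approach buys, if anything, is an explicit witnessing pair of matrix entries separating the two strata, which your contrapositive formulation does not surface. One minor cosmetic point: you can avoid sequences (and any appeal to metrizability) by noting that $X^s_P$ is contained in the closed locus $\{b: b_{ik}=\pm b_{i\ell}\}$ whenever $k,\ell$ share a part of $P_i$, so $\overline{X^s_P}$ is too; the rest of the argument is unchanged.
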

\begin{proof}
\rojo{Write} $P=(P_1, \ldots, P_n)$ and $P'=(P'_1, \ldots, P'_n)$ so that $$ \displaystyle{\sum_{i=1}^{n} |P_i|}=\displaystyle{\sum_{i=1}^{n} |P'_i|}.$$ If there exists \rojo{an integer $j_1\in[n]$ with} $|P_{j_1}| > |P'_{j_1}|$ \rojo{(or $|P_{j_1}| < |P'_{j_1}|$), then the hypothesis forces the existence of another integer $j_2\in[n]$ with} $|P_{j_2}| < |P'_{j_2}|$ \rojo{($|P_{j_2}| > |P'_{j_2}|$, respectively) and, in such a case~(\ref{inter}) obviously holds. Thus, without loss of generality we can assume} $|P_i|=|P'_i|$ for all \rojo{$i\in[n]$.} Since $P \neq P'$, there exists $k\in \rojo{[n]}$ such that $P_k \neq P'_k$. \rojo{Write} $P_k= \{\alpha_1, \ldots, \alpha_{\ell_0}\}$ and  $P'_k= \{\alpha'_1, \ldots, \alpha'_{\ell_0}\}$, both ordered in the sense \rojo{indicated at the beginning of the section}.

\medskip\rojo{Assume there are integers $t\in[\ell_0]$} with $L(\alpha_t) < L(\alpha'_t)$, \rojo{and let $t_0$ be the first such $t$ (necessarily $t_0>1$). Then any} $(b_1, \ldots, b_s) \in X^s_{P'}$ \rojo{must satisfy}
$$
b_{kL(\alpha_{\rojo{t_0}})} = \pm b_{kj_0}
$$ 
for some \rojo{$1 \leq j_0 \leq L(\alpha'_{t_0-1})\leq L(\alpha_{t_0-1})<L(\alpha_{t_0})$, condition that is then inherited by elements in $\overline{X^s_{P'}}$. However, by definition, any} $(b_1, \ldots, b_s) \in X^s_{P}$ \rojo{satisfies}
$$
b_{kL(\alpha_{\rojo{t_0}})}\neq \pm b_{kj}
$$
for all $1 \leq j <  L(\alpha_{\rojo{t_0}}).$ Therefore $X^s_P \cap \overline{X^s_{P'}}= \emptyset$. \rojo{A symmetric argument shows} $\overline{X^s_P} \cap X^s_{P'}= \emptyset$ \rojo{whenever there are integers $t\in[\ell_0]$ with $L(\alpha'_t) < L(\alpha_t)$. As a consequence, we can assume, without loss of generality, that} $L(\alpha_{j}) \leq L(\alpha'_{j})$ for all $j \in \rojo{[\ell_0]}\;$\rojo{{}---{this looses the symmetry, so we now have to make sure we show \emph{both} equations in~(\ref{inter}).}}

\medskip\noindent\rojo{{\bf Case $1$.} Assume there are integers $t\in[\ell_0]$} such that $L(\alpha_t) < L(\alpha'_t)$, and \rojo{let $t_0$ be the largest} \rojo{such $t$. We have already noticed that} $X^s_P \cap \overline{X^s_{P'}}= \emptyset$ \rojo{is forced}. Moreover, note that \rojo{either $t_0=\ell_0$ or, else,} $L(\alpha_{t_0}) < L(\alpha'_{t_0}) <L(\alpha'_{t_0+1})=L(\alpha_{t_0+1})$, \rojo{but in any case we have}
\begin{itemize}

\vspace{-.5mm}
\item if $(b_1, \ldots, b_s) \in X^s_{P}$, \rojo{then} $b_{kL(\alpha'_{t_0})}= \pm b_{kj_0}$ for some $1 \leq j_0 < L(\alpha'_{t_0})$, and

\vspace{-2mm}
\item if $(b_1, \ldots, b_s) \in X^s_{P'}$, then $b_{kL(\alpha'_{t_0})}\neq \pm b_{kj}$ for all $1 \leq j < L(\alpha'_{t_0})$.

\vspace{-2mm}
\end{itemize}

\noindent \rojo{Since the former condition is inherited on elements of $\overline {X^s_{P}}$, we see} $\overline{X^s_{P}}\cap X^s_{P'}= \emptyset$.

\medskip\noindent \rojo{{\bf Case $2$}. Assume}   $L(\alpha_j) = L(\alpha'_j)$ for all \rojo{$j\in[\ell_0]$. (Note \colorito{that} the symmetry is now restored.)} \rojo{Since $P_k\neq P'_k$, there is an integer $j_0\in[\ell_0]$ with} $\alpha_{j_0}\neq \alpha'_{j_0}$. Without \rojo{loss} of generality \rojo{we can further assume there is an integer $m_0 \in \alpha_{j_0}-\alpha'_{j_0}$ (note $m_0 \neq L(\alpha_{j_0})$, but once again the symmetry has been  destroyed). Under these conditions we have}
\begin{itemize}

\vspace{-.5mm}
\item if $(b_1, \ldots, b_s) \in X^s_P$, then $b_{kL(\alpha_{j_0})}= \pm b_{km_0}$, and

\vspace{-2mm}
\item if $(b_1, \ldots, b_s) \in X^s_{P'}$ then  $b_{kL(\alpha_{j_0})}=b_{kL(\alpha'_{j_0})}\neq \pm b_{km_0} $. 

\vspace{-2mm}
\end{itemize}

\noindent \rojo{Since the former condition is inherited on elements of $\overline {X^s_{P}}$, we see} $\overline{X^s_P} \cap X^s_{P'}= \emptyset$. Moreover, since $m_0 \notin \alpha'_{j_0}$, there \rojo{is $d_0\in[\ell_0]$} with $m_0 \in \alpha'_{d_0}$. \rojo{Necessarily $d_0\neq j_0$ and $m_0\notin\alpha_{d_0}$, so we now have} 
\begin{itemize}

\vspace{-.5mm}
\item if $(b_1, \ldots, b_s) \in X^s_{P'}$, then $b_{kL(\alpha'_{d_0})}=\pm b_{km_0}$, and

\vspace{-2mm}
\item if $(b_1, \ldots, b_s)\in X_P^s$, then $b_{kL(\alpha'_{d_0})}=b_{kL(\alpha_{d_0})} \neq \pm b_{km_0}$,

\vspace{-2mm}
\end{itemize}

\noindent \rojo{implying} $X^s_P\cap \overline{X^s_{P'}}=\emptyset$. 
\end{proof}

\rojo{Our only remaining task in this subsection is the construction of a local rule over $D_j$ for each $j\in[\N^s(X)]_0$. Actually, by~(\ref{X^s_P}), (\ref{unionxp}), and Proposition~\ref{interxp}, the task can be simplified to the construction of a local rule over each $X^{s}_{P,\beta}$. To fulfill such a goal, it will be convenient to normalize each sphere $S^{k_i}$} so to have great semicircles of length $1/2$. \rojo{Then, for} $x, y \in \rojo{S^{k_i}}$, we let $d(x, y)$ stand for the length of the shortest geodesic in \rojo{$S^{k_i}$} between $x$ and $y$ (e.g.~$d(x,-x)=1/2$). Likewise, the local rules $\phi_0$ and $\phi_1$ for \rojo{each $S^{k_i}$} defined at Example~\ref{TCs S} need to be adjusted---\rojo{but the domains $A_i$, $i=0,1$, remain unchanged---as follows:}
\roji{For} $i= 0,1$ \rojo{and}  $(x, y) \in A_i$ we set
$$\tau_i(x, y)(t)=\begin{cases}
\phi_i(x, y)\left(\frac{1}{d(x, y)}t\right), & 
0\leq \green{t < d(x, y)}; \\ y, & d(x, y)\leq t \leq 1.
\end{cases}$$ 
Thus, $\tau_i$ reparametrizes $\phi_i$ so to perform the motion at speed $1$, keeping still at the final position once it is reached---which happens at most at time $1/2$. 

\medskip
\rojo{In what follows it is helpful to keep in mind that, as before, elements} $(\rojo{b}_1, \ldots, \rojo{b}_s) \in X^s$, with $\rojo{b}_j=(\rojo{b}_{1j}, \ldots, \rojo{b}_{nj})$ for $j\in \rojo{[s]}$, \rojo{can be thought of as matrices $(b_{i,j})$ whose columns represent the various stages in $X$ through which motion is to be planned (necessarily along rows). Actually, we follow a ``pivotal'' strategy: starting at the first column, motion spreads to all other columns---keeping still in the direction of the first column. In detail, in terms of the notation set at the beginning of the introduction for elements in the function space $X^{J_s}$,} consider the map
\begin{equation}\label{elplanmot}
\rojo{\green{\varphi}\colon X^s} \to \mathbb{S}\rojo{(k_1,\ldots, k_n)}^{J_s}
\end{equation}
\rojo{given by} $\green{\varphi}\left((\rojo{b}_1, \ldots, \rojo{b}_s)\right)=\left(\green{\varphi_1}(\rojo{b}_1, \rojo{b}_1), \ldots , \green{\varphi_s}(\rojo{b}_1, \rojo{b}_s)\right)$ where, \rojo{for $j\in[s]$,} $$\green{\varphi_j}(\rojo{b}_1, \rojo{b}_j)= \left(\green{\varphi_{1j}} (\rojo{b}_{11}, \rojo{b}_{1j}), \ldots, \green{\varphi_{nj}}(\rojo{b}_{n1}, \rojo{b}_{nj})\right)$$ \rojo{is the path in $\mathbb{S}(k_1,\ldots,k_n)$, from $b_1$ to $b_j$, whose $i$-th coordinate} $\green{\varphi_{ij}}(\rojo{b}_{i1}, \rojo{b}_{ij})$, \rojo{\rojo{$i\in[n]$,} is the path in $S^{k_i}$, from $b_{i1}$ to $b_{ij}$,} defined by $$\green{\varphi_{i,j}} (\rojo{b}_{i1},\rojo{b}_{ij})(t)=\begin{cases}   \rojo{b}_{i1}, & 0 \leq t \leq t_{ \rojo{b}_{i1}}, \\ \rojo{\sigma}(\rojo{b}_{i1}, \rojo{b}_{ij})(t-t_{\rojo{b}_{i1}}), & t_{\rojo{b}_{i1}} \leq t \leq 1.  \end{cases}$$ Here $t_{\rojo{b}_{i1}}=\frac{1}{2} -d(\rojo{b}_{i1}, \rojo{e^0})$ and 
\begin{equation}\label{lasigma}
\rojo{\sigma(b_{i1}, b_{ij})}=\begin{cases}\tau_1\rojo{(b_{i1}, b_{ij})}, & (\rojo{b}_{i1}, \rojo{b}_{ij}) \in A_1; \\ \tau_0\rojo{(b_{i1}, b_{ij})}, & (\rojo{b}_{i1}, \rojo{b}_{ij}) \in A_0. \end{cases}
\end{equation}

\rojo{Fix $n$-tuples $P=(P_1, \ldots, P_{n}) \in \mathcal{P}$ and $\beta=(\beta^1, \ldots, \beta^n)$, with $P_i=\{\alpha_{1}^i, \ldots, \alpha_{n(P_i)}^i\}$ and $\beta^i \subseteq \alpha_{1}^i-\{1\}$ for all $i\in[n]$. Although \green{$\varphi$} is not continuous, its restriction $\green{\varphi}_{P, \beta}$ to $X^s_{P, \beta}$ is, for then~(\ref{lasigma}) takes the form}
$$
\sigma=\begin{cases}\tau_1, & \mbox{$j \notin \alpha^i_{1}$ or $j \in \rojo{\beta^i\cup\{1\}}$}; \\ \tau_0, & \mbox{$j \in \alpha^i_{1}$ and $j \notin \rojo{\beta^i\cup\{1\}}$}. \end{cases}
$$
\rojo{Since} $\green{\varphi}_{P, \beta}$ is \rojo{clearly} a section for the end-points evaluation map $e_s^{\rojo{\mathbb{S}(k_1,\ldots,k_n)}}$, we only \red{need to} check that $\green{\varphi}_{P, \beta}$ \rojo{actually} takes values in $X^{J_s}$, \rojo{i.e.~that our proposed motion planner does not leave $X$.}

\begin{remark}\label{intento}{\em
\rojo{An attempt to verify the analogous assertion in~\cite[proof of Proposition~3.5]{CohenPruid08} (where $s=2$), and the eventual realizing and fixing of the problems with that assertion, led to the work in~\cite{morfismos}. The verification in the current more general setting \colorito{(i.e.~proof of Proposition~\ref{noleave} below)} is inspired by the one carefully explained in~\cite[page~7]{morfismos}, and here we include full details for completeness.}
}\end{remark} 

\begin{proposition}\label{noleave}
\rojo{The image of $\green{\varphi}$ is contained in $X^{\green{J_s}}$.}
\end{proposition}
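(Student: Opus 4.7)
The plan is to verify the conclusion coordinate-by-coordinate, showing that at every moment $t$ the point $\green{\varphi}_j(b_1,b_j)(t)$ lies in a cell of $X$. Given $(b_1,\ldots,b_s)\in X^s$, for each $j\in[s]$ set $J_j:=\{i\in[n]\mid b_{ij}\neq e\}$, so that $b_j$ lies in the cell $e_{J_j}$ of $\mathbb{S}(k_1,\ldots,k_n)$. Since $b_j\in X$ and $X$ is a subcomplex, $J_j\in\mathcal{K}_X$. Fix $j\in[s]$ and $t\in[0,1]$ and consider the ``support set''
$$T_j(t):=\{i\in[n]\mid \green{\varphi}_{ij}(b_{i1},b_{ij})(t)\neq e\}.$$
It is enough to prove that $T_j(t)\in\mathcal{K}_X$, for then $\green{\varphi}_j(b_1,b_j)(t)\in\overline{e_{T_j(t)}}\subseteq X$.

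The heart of the argument lies in a pair of complementary ``vanishing'' properties, tuned by the choice of delay $t_{b_{i1}}=\frac{1}{2}-d(b_{i1},e)$:
\begin{itemize}
\item[(i)] If $b_{i1}=e$, then $t_{b_{i1}}=\frac{1}{2}$, whence $\green{\varphi}_{ij}(b_{i1},b_{ij})(t)=b_{i1}=e$ for every $t\in[0,\frac{1}{2}]$.
\item[(ii)] If $b_{ij}=e$, then $\green{\varphi}_{ij}(b_{i1},b_{ij})(t)=e$ for every $t\in[\frac{1}{2},1]$.
\end{itemize}
The verification of (ii) is the only substantive calculation, and it splits according to whether $\sigma=\tau_1$ or $\sigma=\tau_0$. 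If $b_{i1}\neq -e$, then $\sigma=\tau_1$, the unit-speed geodesic from $b_{i1}$ to $e$ of length $d(b_{i1},e)$; started at $t_{b_{i1}}=\frac{1}{2}-d(b_{i1},e)$, it reaches $e$ exactly at $t=\frac{1}{2}$ and then remains there. If $b_{i1}=-e$, then $\sigma=\tau_0$, the unit-speed semicircle of length $\frac{1}{2}$ from $-e$ to $e$; started at $t_{b_{i1}}=0$, it again reaches $e$ at $t=\frac{1}{2}$ and stays. In either subcase $\green{\varphi}_{ij}(b_{i1},b_{ij})(t)=e$ for $t\in[\frac{1}{2},1]$.

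Property (i) immediately yields $T_j(t)\subseteq J_1$ for $t\in[0,\frac{1}{2}]$, and property (ii) yields $T_j(t)\subseteq J_j$ for $t\in[\frac{1}{2},1]$. Since $J_1,J_j\in\mathcal{K}_X$ and a simplicial complex is closed under taking subsets, $T_j(t)\in\mathcal{K}_X$ for every $t\in[0,1]$ and every $j\in[s]$, finishing the proof. The only conceptual point is that the delays $t_{b_{i1}}$ have been calibrated precisely so that $t=\frac{1}{2}$ serves as a universal meeting time at which every coordinate that is supposed to leave or reach the basepoint does so exactly; everything else is bookkeeping of speeds and starting times.
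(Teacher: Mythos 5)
Your proof is correct and takes essentially the same approach as the paper: both arguments split $[0,1]$ at the universal meeting time $t=\tfrac12$, using the calibrated delay $t_{b_{i1}}=\tfrac12-d(b_{i1},e)$ to show that the support set of $\varphi_j(b_1,b_j)(t)$ is contained in $J_1$ for $t\le\tfrac12$ and in $J_j$ for $t\ge\tfrac12$. You supply a slightly more explicit verification of the $t\ge\tfrac12$ half (the $\tau_0$ versus $\tau_1$ case split), but the underlying idea and the conclusion are identical.
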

\begin{proof}
\magenta{\rojo{Choose $(b_1,b_2,\ldots,b_s)\in X^s$ where, as above, $b_j=(b_{1j},b_{2j},\ldots,b_{nj})\in X$. We need to check that, for all $j\in[s]$, the \rojo{image of} $\green{\varphi}_j(b_1,b_j)\colon[0,1]\to \mathbb{S}(k_1,\ldots,k_n)$ lies inside $X$. By construction, the path} $\green{\varphi}_{j}(b_1,b_j)$ \rojo{runs coordinate-wise,} from $b_1$ to $b_j$, according to the \rojo{instructions} $\tau_k(b_{i1}, b_{ij})$ ($k=0, 1, \,\, i \in [n] $), except that, \rojo{in the $i$-th coordinate,} the movement is delayed a time $t_{b_{i1}}\leq1/2$. The closer $b_{i1}$ gets to $\rojo{e^0}$, the closer the delaying time $t_{b_{i1}}$ gets to $1/2$. It is then convenient to think of the path $\green{\varphi}_j(b_1, b_j)$ as \rojo{running} in two \rojo{sections}. In the first \rojo{section} ($t\leq1/2$) all initial coordinates $b_{i1}=e^0$ keep still, while the rest of the coordinates (eventually) start traveling to their corresponding final position $b_{ij}$. Further, \rojo{when} the second section starts ($t=1/2$), any final coordinate $b_{ij}=\rojo{e^0}$ will already have been reached, \rojo{and will keep still throughout the rest of the motion.} As a result, \rojo{the image of} $\green{\varphi_j(b_1,b_j)}$ \rojo{is forced to be contained in} $X$. In more detail, let $e(J_1,\ldots,J_s):=e_{J_1} \times e_{J_2} \times \cdots \times e_{J_s} \az{{}\subseteq{}} X^{\green{s}}$ be \rojo{the} product of cells of $X$ \rojo{containing} $(b_1, b_2, \ldots, b_s )$. Then, coordinates  corresponding to indexes \rojo{$i \in [n]-J_1$} keep their initial position $b_{i1}=e^0$ through time $t\leq1/2$. Therefore $\green{\varphi}_j(b_1, b_j)[0,1/2]$ stays within $\rojo{\overline{e_{J_1}}}\subseteq X$. On the other hand, by construction, $\green{\varphi}_{ij}(b_{i1}, b_{ij})(t)=b_{ij}=e^0$ whenever $t\geq1/2$ and \rojo{$i \in[n]- J_j$.} Thus, $\green{\varphi}_j(b_1, b_j)[1/2,1]$ stays within $\rojo{\overline{e_{J_j}}}\subseteq X$.}
\end{proof}


 \subsection{Even case}\label{laspares}
We \rojo{now turn} our attention to the case when $X$ \rojo{is} a subcomplex of \rojo{$\mathbb{S}(k_1,\ldots,k_n)$ with all the $k_i$ even---assumption that will be in force throughout this subsection. As above,} the goal \rojo{is the construction of an optimal motion planner for the $s$-th topological complexity of $X$.} We \rojo{start with the following analogue of Example~\ref{TCs S}:} 

\begin{ejem}\label{TCs S even}{\em
\rojo{Local domains for the sphere $\mathbb{S}(2d)=S^{2d}$ in the case $s=2$ are given by}
\begin{eqnarray*}
\colorito{B_0}&=&\{(e^{\rojo{0}}, -e^{\rojo{0}}), (-e^{\rojo{0}}, e^{\rojo{0}})\} \rojo{{}\subseteq{}} \mathbb{S}\rojo{(2d)} \times \mathbb{S}\rojo{(2d)},\\
\colorito{B_1}&=&\{(x, -x) \in \mathbb{S}\rojo{(2d)} \times \mathbb{S}\rojo{(2d)} \,|\, x \neq \pm e^{\rojo{0}}\}, \mbox{ and}\\
\colorito{B_2}&=&\{(x, y) \in \mathbb{S}\rojo{(2d)} \times \mathbb{S}\rojo{(2d)} \,\,|\,\, x \neq -y\}\;\;=\;\;\mathbb{S}\rojo{(2d)} \times \mathbb{S}\rojo{(2d)} - (B_0 \cup B_1),
\end{eqnarray*}
\rojo{with corresponding local rules $\lambda_i\colon \colorito{B_i}\to \mathbb{S}(2d)^{[0,1]}$ ($i=0,1,2$) described as follows:} 
\begin{itemize}
\item $\lambda_0 (e^{\rojo{0}}, -e^{\rojo{0}})$ \rojo{and} $\lambda_0 (-e^{0}, e^{0})$ \rojo{are the} paths, at constant speed, from $e^{\rojo{0}}$ to $-e^{\rojo{0}}$ \rojo{and from $-e^{0}$ to $e^{0}$, respectively, along some fixed meridian---thinking of $e^0$ and $-e^0$ as the poles of $\mathbb{S}(2d)$.}

\vspace{-1.5mm}
\item \rojo{For} a \rojo{fixed nowhere zero} tangent vector field \rojo{$\upsilon$ on} $\mathbb{S}(2d)- \{\rojo{\pm}e^{\rojo{0}}\}$, $\lambda_1(x, -x)$ (with $x \neq\rojo{\pm}e^{\rojo{0}}$) \rojo{is} the path at constant speed from $x$ to $-x$ along the \rojo{great} semicircle determined by the tangent vector $\upsilon(x)$. 

\vspace{-1.5mm}
\item \rojo{For $x\neq-y$,} $\lambda_2(x, y)$ \rojo{is} the path \rojo{from $x$ to $y$,} at constant speed, along the \rojo{shortest} geodesic arc \rojo{determined by $x$ and $y$.}
\end{itemize}
}\end{ejem}

\rojo{The generalization of Example~\ref{TCs S even} to \colorito{the} higher topological complexity \colorito{of a} subcomplex of a product of even \colorito{dimensional} spheres is slightly more \colorito{elaborate} than the corresponding generalization of Example~\ref{TCs S} in the previous section due, in part,  to the additional local domain in Example~\ref{TCs S even}. So, before considering the general situation (Theorem~\ref{TC_s(X) even} below), and in order to  illustrate the essential points in our construction, it will be convenient to give full details in the case \colorito{of} $\TC_s(\mathbb{S}(2d))$.}

\smallskip
\rojo{Consider the sets}
\begin{eqnarray*}
T_0&=&\{(x_1, \ldots, x_s) \in \rojo{\mathbb{S}(2d)^s}\,\, | \,\, x_j\neq\rojo{\pm}e^{\rojo{0}},\,\, \text{for all}\,\, \rojo{j\in[s]}\},\\
T_1&=&\{(x_1, \ldots, x_s) \in \rojo{\mathbb{S}(2d)^s}\,\, | \,\, x_j=\rojo{\pm}e^{\rojo{0}},\,\, \text{for some}\,\, \rojo{j\in[s]}\}
\end{eqnarray*}
\rojo{and,} for each \rojo{partition $P$} of \rojo{$[s]$} and \rojo{each }$i \in \{0, 1\}$, $$\rojo{\mathbb{S}(2d)^{s}_{P, i}}=\left\{(x_1, \ldots, x_s) \in \rojo{\mathbb{S}(2d)^s}\,\, \Big| \!\!\!\!\begin{array}{ll} &x_{l}=\pm x_{k} \,\,  \mbox{if and only if} \,\,k\mbox{ and }l \\ & \mbox{belong to the same \rojo{part} in} \,\, \rojo{P} \,\,  \end{array} \right\} \cap T_i.$$ \rojo{The norm of the pair $(P,i)$ above is defined as} $\N(\rojo{P}, i)= |\rojo{P}|-i$. \rojo{Lastly,} for $k \in \rojo{[s]_0}$, \rojo{consider the set} 
\begin{equation}\label{lashaches}
H_k= \bigcup_{\N(\rojo{P}, i)=k}\rojo{\mathbb{S}(2d)^s_{P, i}}.
\end{equation}

\begin{proposition}\label{queva}
\rojo{There is an optimal motion planner for $\hspace{.172mm}\mathbb{S}(2d)$ with local domains $H_k$, $k\in[s]_0$.}
\end{proposition}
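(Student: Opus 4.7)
The plan is to adapt the three-part template of Subsection~\ref{suboddmotion} to the present situation, now employing the three-domain planner $(B_0,B_1,B_2)$ with local rules $(\lambda_0,\lambda_1,\lambda_2)$ from Example~\ref{TCs S even}. Concretely, I would verify in sequence that (i) the $H_k$ partition $\mathbb{S}(2d)^s$ into $s+1$ pieces, (ii) the decomposition~(\ref{lashaches}) is a topological disjoint union, so each $H_k$ is an ENR, (iii) every $H_k$ admits a continuous section of $e_s$, and (iv) the resulting cover matches the known value $\TC_s(\mathbb{S}(2d))=s$, yielding optimality.

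For~(i) I would note that each tuple $(x_1,\ldots,x_s)\in\mathbb{S}(2d)^s$ determines a unique partition $P$ of $[s]$ (indices $j,k$ lie in a common block iff $x_j=\pm x_k$) together with a unique flag $i\in\{0,1\}$ recording whether some coordinate equals $\pm e^0$. Since $|P|\in[s]$ and $i\in\{0,1\}$, the norm $\N(P,i)=|P|-i$ takes every value in $[s]_0$ and no others, producing $s+1$ pairwise disjoint pieces. For~(ii), if $\N(P,i)=\N(P',i')$ with $(P,i)\neq(P',i')$, there are two cases. When $i=i'$ the argument adapts Proposition~\ref{interxp} (case $n=1$) verbatim. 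When $i\neq i'$---say $i=0$, $i'=1$, so $|P'|=|P|+1$---the closures are disjoint for a basic reason: any limit of tuples all of whose entries avoid $\{\pm e^0\}$ can only yield a coarser associated partition, so the block count cannot strictly increase in the limit, contradicting $|P'|>|P|$; a symmetric continuity argument handles convergence in the opposite direction. Thus each $H_k$ is a topological disjoint union of ENRs, hence an ENR by~\cite[Proposition~IV.8.10]{Dold}.

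For~(iii) I would follow the odd-case pattern and further stratify each $\mathbb{S}(2d)^s_{P,i}$ by subsets $\beta\subseteq\alpha_1-\{1\}$, where $\alpha_1$ denotes the block of $P$ containing~$1$ and $\beta:=\{j\in\alpha_1-\{1\}:x_j=x_1\}$. Each stratum $\mathbb{S}(2d)^s_{P,i,\beta}$ is clopen in $\mathbb{S}(2d)^s_{P,i}$ (since the conditions $x_j=x_1$ and $x_j=-x_1$ are each closed and jointly exhaust $\mathbb{S}(2d)^s_{P,i}$ for $j\in\alpha_1$), so it suffices to define the rule on each. The local rule is the pivot construction: assign to $(x_1,\ldots,x_s)$ the tuple of paths from $x_1$ to each $x_j$ obtained by applying $\lambda_0$, $\lambda_1$, or $\lambda_2$ according to whether $(x_1,x_j)\in B_0$, $B_1$, or $B_2$---a determination that is constant on each stratum by construction of~$P$, $i$, and $\beta$. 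Step~(iv) then follows from the standard identity $\TC_s(\mathbb{S}(2d))=s$ established e.g.~in~\cite[Section~4]{Ru10}.

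The main obstacle is the cross-parity case of~(ii): unlike Proposition~\ref{interxp}, the presence of the discrete parameter $i$ creates configurations where $(P,i)\neq(P',i')$ without the partitions themselves differing, forcing a new limit argument comparing block counts in the presence versus absence of coordinates at $\pm e^0$. Once this is settled, the remaining steps reduce to careful bookkeeping along the odd-case template, now distributing the work among the three planners $\lambda_0,\lambda_1,\lambda_2$ rather than the two planners $\phi_0,\phi_1$ of Example~\ref{TCs S}.
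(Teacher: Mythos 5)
Your steps (i), (ii) and (iv) essentially match the paper's reasoning, and your handling of the cross-parity case in (ii) is sound (one direction uses the fact that the closed condition ``some coordinate equals $\pm e^0$'' persists in the closure; the other uses that the block count cannot increase under limits). The genuine gap is in step (iii), specifically in the claim that the assignment of a pair $(x_1,x_j)$ to $B_0$, $B_1$, or $B_2$ is ``constant on each stratum $\mathbb{S}(2d)^s_{P,i,\beta}$ by construction of $P$, $i$, and $\beta$.'' That claim holds when $i=0$, but fails when $i=1$ and $|P|>1$. For an index $j\in\alpha_1-(\beta\cup\{1\})$ the stratum data force $x_j=-x_1$, hence $(x_1,x_j)\in B_0\cup B_1$; but whether the pair lies in $B_0$ (i.e.\ $x_1=\pm e^0$) or in $B_1$ (i.e.\ $x_1\neq\pm e^0$) is \emph{not} determined by $(P,i,\beta)$, since the coordinate equal to $\pm e^0$ that triggers $i=1$ may sit in a block other than $\alpha_1$. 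Consequently, on a single piece $\mathbb{S}(2d)^s_{P,1,\beta}$ your rule would alternate between $\lambda_1$ (a semicircle chosen by the vector field $\upsilon$) and $\lambda_0$ (the fixed meridian) as $x_1$ moves to or away from $\pm e^0$, which is discontinuous.

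The paper patches exactly this by further decomposing each $\mathbb{S}(2d)^s_{P,1,\beta}$ into $\mathbb{S}(2d)^s_{P,1,\beta}\cap T_0(\alpha_1)$ and $\mathbb{S}(2d)^s_{P,1,\beta}\cap T_1(\alpha_1)$, i.e.\ according to whether the offending coordinate $\pm e^0$ occurs inside the block $\alpha_1$, and it is Lemma~\ref{auxiliando} that certifies this finer decomposition to be topological (the key being that, since $i=1$, an element of the $T_0(\alpha_1)$-piece forces some $x_j=\pm e^0$ with $j\notin\alpha_1$, a closed condition incompatible with the $T_1(\alpha_1)$-piece). Your construction needs this extra layer of stratification before the local rule can be declared continuous.
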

\begin{proof}
\rojo{The optimality of such a motion planner follows by the well known fact the $s$-th topological complexity of an even sphere is $s$. On the other hand, it is obvious that $H_0,\ldots, H_s$ form a pairwise disjoint covering of $\mathbb{S}(2d)^s$. Since each $\mathbb{S}(2d)^s_{P,i}$ is clearly an ENR, it suffices to show that~(\ref{lashaches}) is a topological disjoint union (so $H_k$ is also an ENR), and that each $\mathbb{S}(2d)^s_{P,i}$ admits a local rule (all of which, therefore, \colorito{determine} a local rule on $H_k$).}

\medskip\noindent 
\rojo{{\bf Topology of $H_k$}: For pairs} $(\rojo{P}, i)$ and $(\rojo{P}', i')$ \rojo{as above, with} $\N(\rojo{P}, i)=\N(\rojo{P'}, i')$ \rojo{and} $(\rojo{P}, i) \neq (\rojo{P'}, i')$, \rojo{we prove}
\begin{equation}\label{otravez}
\rojo{\overline{\mathbb{S}(2d)^s_{P,i}}\cap\mathbb{S}(2d)^s_{P',i'}=\emptyset=\mathbb{S}(2d)^s_{P,i}\cap\overline{\mathbb{S}(2d)^s_{P',i'}}.}
\end{equation}
If \rojo{$i\neq i'$, say} $i=1$ and $i'=0$, then \rojo{the first equality in~(\ref{otravez}) is obvious, whereas the second equality follows since} $|\rojo{P}| > |\rojo{P'}|$. \rojo{On the other hand, if} $i=i'$, \rojo{then} $|\rojo{P}|=|\rojo{P'}|$ with $\rojo{P}\neq\rojo{P'}$, and the \rojo{argument starting in the second paragraph of the proof of} Proposition \ref{interxp} gives~\rojo{(\ref{otravez})}. 

\medskip\noindent \rojo{{\bf Local section on $\mathbb{S}(2d)^s_{P,i}$}:} \rojo{We assume the partition $P=\{\alpha_1, \ldots, \alpha_n\}$ is ordered in the sense indicated at the beginning of this section.} For each $\rojo{\beta} \subseteq \rojo{\alpha_1}-\{1\}$, let  $$\rojo{\mathbb{S}(2d)^s_{P, i,\beta}}=\rojo{\mathbb{S}(2d)^s_{P, i}} \cap \{(x_1, \ldots, x_s) \in \rojo{\mathbb{S}(2d)^s}\, |\, x_1 =x_j\rojo{{}\Leftrightarrow{}} j \in \rojo{\beta},\;\rojo{\forall j\in[s]-1}\}. $$ \rojo{Since}
$$
\rojo{\mathbb{S}(2d)^s_{P, i}}=\bigsqcup_{\rojo{\beta} \subseteq \rojo{\alpha_1}-\{1\}}\rojo{\mathbb{S}(2d)^s_{P, i,\beta}}
$$
\rojo{is a topological disjoint union, it suffices to construct a local section on each $\mathbb{S}(2d)^s_{P, i,\beta}$.}

\medskip\noindent\rojo{{\bf Case $i=0$}.}
\rojo{As in the previous subsection, the required} local section \rojo{can be} defined \rojo{by the formula} $\sigma(x_1, \ldots, x_s)=(\sigma_1 (x_1, x_1), \ldots, \sigma_s(x_1, x_s))$ where
$$
\rojo{\sigma_j}=\begin{cases}
\rojo{\lambda_2}, & \mbox{if }j \in \rojo{\left([s]-\alpha_1\right) \cup \beta\cup\{1\}};\\
\rojo{\lambda_1}, & \mbox{\rojo{otherwise}.}
\end{cases}
$$

\medskip\noindent\rojo{{\bf Case $i=1$}.}
\rojo{The required local section is now defined in terms of the decomposition}
\begin{equation}\label{tdu2}
\rojo{\mathbb{S}(2d)^s_{P, i,\beta}}= \left( \rojo{\mathbb{S}(2d)^s_{P, i,\beta}} \cap \rojo{T}_0(\rojo{\alpha_1})\right)\sqcup \left(\rojo{\mathbb{S}(2d)^s_{P, i,\beta}} \cap \rojo{T}_1(\rojo{\alpha_1}) \right)
\end{equation} 
\rojo{which will be shown in Lemma~\ref{auxiliando} below to be a topological disjoint union. Here} $$\rojo{T}_0(\rojo{\alpha_1})=\{(x_1, \ldots, x_s) \in \rojo{\mathbb{S}(2d)^s}\,\, | \,\, x_j\neq\rojo{\pm}e^{\rojo{0}}, \,\, \mbox{for all}\,\, j\in \rojo{\alpha_1} \}$$ and $$\rojo{T}_1(\rojo{\alpha_1})=\{(x_1, \ldots, x_s) \in \rojo{\mathbb{S}(2d)^s}\,\, | \,\, x_j=\rojo{\pm}e^{\rojo{0}} ,\,\, \text{for some}\,\, j\in \rojo{\alpha_1} \}.$$ A local section \rojo{on} $\rojo{\mathbb{S}(2d)^s_{P, i,\beta} \cap T_0(\alpha_1)}$ is defined just as in \rojo{the case $i=0$}, \rojo{whereas} a local section \rojo{on} $\rojo{\mathbb{S}(2d)^s_{P, i,\beta} \cap T_1(\alpha_1)}$ is defined \rojo{by the formula} $\mu(x_1, \ldots, x_s)=(\mu_1 (x_1, x_1), \ldots, \mu_s(x_1, x_s))$ where
$$
\rojo{\mu_j}=\begin{cases}
\rojo{\lambda_2}, &\mbox{if \ }\rojo{j \in \left([s]-\alpha_1\right)\cup\beta\cup\{1\};}\\
\rojo{\lambda_0}, & \mbox{\rojo{otherwise.}}
\end{cases}
$$

\vspace{-8mm}
\end{proof}

\begin{lema}\label{auxiliando}
\rojo{The decomposition~$(\ref{tdu2})$ is a topological disjoint union $($recall $i=1).$}
\end{lema}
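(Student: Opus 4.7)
The plan is to show that both summands of the decomposition are clopen subsets of $Y := \mathbb{S}(2d)^s_{P,1,\beta}$, for which I would rely on a simple pigeonhole argument organized around what one may call the pole part of a point. Specifically, I call $\alpha_l$ the \emph{pole part} of $y = (x_1, \ldots, x_s) \in Y$ if $x_j \in \{\pm e^0\}$ for every $j \in \alpha_l$. The first substantive step is to observe that every $y \in Y$ has \emph{exactly one} pole part. Existence follows from the condition $i = 1$ (some coordinate equals $\pm e^0$) combined with the partition property $x_k = \pm x_l$ within any part of $P$, which propagates the pole condition throughout the part containing that coordinate. Uniqueness comes from the fact that if both $\alpha_l$ and $\alpha_{l'}$ with $l \neq l'$ were pole parts, then coordinates from these distinct parts would all land in $\{\pm e^0\}$, hence be $\pm$-equivalent, contradicting the defining condition $x_k \neq \pm x_l$ for $k, l$ in different parts of $P$.

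Next, for each $l \in [n(P)]$ I would set
$$Y^l := \{y \in Y \,:\, \alpha_l \text{ is the pole part of } y\} = Y \cap \bigcup_{j \in \alpha_l} \{y \in \mathbb{S}(2d)^s : x_j \in \{\pm e^0\}\},$$
where the second equality uses exactly the dichotomy of the previous paragraph (once one coordinate of $\alpha_l$ hits $\pm e^0$, the whole part does, and by uniqueness no other part can). The right-hand expression writes $Y^l$ as the intersection of $Y$ with a finite union of closed subsets of $\mathbb{S}(2d)^s$, so $Y^l$ is closed in $Y$. Since, by the dichotomy, $Y = \bigsqcup_l Y^l$ set-theoretically and only finitely many indices $l$ appear, each $Y^l$ also equals $Y \setminus \bigcup_{l' \neq l} Y^{l'}$, the complement in $Y$ of a finite union of closed sets, hence open in $Y$. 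Consequently every $Y^l$ is clopen in $Y$.

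To finish, I would note that $Y_1 := Y \cap T_1(\alpha_1)$ is precisely $Y^1$, whereas $Y_0 := Y \cap T_0(\alpha_1)$ coincides with $\bigsqcup_{l \neq 1} Y^l$ (again using uniqueness of the pole part to know that in $Y_0$ \emph{some} $\alpha_l$ with $l \neq 1$ must be the pole part). Both pieces are therefore finite unions of clopen subsets of $Y$, hence clopen, which is exactly what makes the decomposition $Y = Y_0 \sqcup Y_1$ a topological disjoint union. I do not anticipate a real obstacle: the only delicate point is the uniqueness of the pole part, but it falls out immediately from the partition condition $x_k \neq \pm x_l$ across distinct parts of $P$.
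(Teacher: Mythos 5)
Your proof is correct and rests on the same key observation as the paper's, namely that the defining condition of $\mathbb{S}(2d)^s_{P,i}$ (coordinates $\pm$-coincide exactly within parts of $P$) forces the coordinates hitting $\pm e^0$ to fill exactly one part of $P$. The paper phrases this as two closure computations: elements of the closure of $Y\cap T_1(\alpha_1)$ still satisfy the closed condition ``$x_j=\pm e^0$ for some $j\in\alpha_1$'' (so they cannot lie in $Y\cap T_0(\alpha_1)$), and, symmetrically, elements of the closure of $Y\cap T_0(\alpha_1)$ satisfy ``$x_j=\pm e^0$ for some $j\notin\alpha_1$'' (using $i=1$), a condition incompatible with membership in $Y\cap T_1(\alpha_1)$. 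You instead exhibit the finer partition $Y=\bigsqcup_l Y^l$ by pole part, show each piece is closed (a finite union of coordinate-wise closed conditions intersected with $Y$) hence, by finiteness of the partition, clopen, and then recognize $Y\cap T_1(\alpha_1)=Y^1$ and $Y\cap T_0(\alpha_1)=\bigsqcup_{l\neq 1}Y^l$. This is a slightly more systematic repackaging that proves the stronger statement that the entire pole-part decomposition is a topological disjoint union, at no extra cost; the essential pigeonhole ingredient (uniqueness of the pole part) is precisely what underlies the paper's second closure argument.
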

\begin{proof}
\rojo{The condition ``$x_j=\pm e^0$ for some $j\in\alpha_1$'' in $T_1(\alpha_1)$ is inherited by elements in its closure, in particular
$$
\left( \mathbb{S}(2d)^s_{P, i,\beta} \cap T_0(\alpha_1)\right)\sqcup \overline{\left(\mathbb{S}(2d)^s_{P, i,\beta} \cap T_1(\alpha_1)\right)}=\emptyset.
$$
On the other hand, since $i=1$, the condition ``$x_j=\pm e^0$ for some $j\not\in\alpha_1$'' is forced on elements of $\mathbb{S}(2d)^s_{P, i,\beta} \cap T_0(\alpha_1)$ and, consequently, on elements of its closure. But the latter condition is not fulfilled by any element in $\mathbb{S}(2d)^s_{P, i,\beta} \cap T_1(\alpha_1)$.}
\end{proof}

\rojo{We now focus on the general situation.}

\begin{theo}{\label{TC_s(X) even}}
\rojo{Assume all of the $k_i$ are even. A subcomplex $X$ of the minimal CW structure on $\mathbb{S}(k_1,\ldots,k_n)$ has} $$\TC_s(X)= \rojo{s(1+\dim(\mathcal{K}_X)).}$$
\end{theo}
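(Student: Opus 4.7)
Set $d:=1+\dim\mathcal{K}_X$. The plan is to prove $\TC_s(X)=sd$ by sandwiching, with the lower bound $sd\le\zcl_s(H^*(X;\mathbb{Q}))$ coming from an explicit cup product of zero-divisors and the upper bound $\TC_s(X)\le sd$ coming from an explicit motion planner with $sd+1$ pairwise disjoint ENR local domains that promotes Proposition~\ref{queva} to the polyhedral-product setting.

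For the lower bound I would exploit the fact that, in even degrees, each zero-divisor has nonzero square. Fix a maximal simplex $J\subseteq[n]$ of $\mathcal{K}_X$ with $|J|=d$, and for $i\in J$ let $x_i\in H^{k_i}(X;\mathbb{Q})$ be the pull-back along $X\hookrightarrow\mathbb{S}(k_1,\ldots,k_n)\to S^{k_i}$ of the generator of $H^{k_i}(S^{k_i};\mathbb{Q})$, writing $x_{i,j}$ for its external extension to the $j$-th factor of $X^s$. Because each $k_i$ is even, all classes involved commute and satisfy $x_{i,j}^{2}=0$, so the zero-divisor $\bar x_{i,j}:=x_{i,1}-x_{i,j}$ has $\bar x_{i,j}^{\,2}=-2\,x_{i,1}x_{i,j}\ne 0$. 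Mimicking the standard Farber--Rudyak computation that yields $\zcl_s(S^{k_i})=s$, for each $i\in J$ the $s$ zero-divisors $\bar x_{i,2},\ldots,\bar x_{i,s-1},\bar x_{i,s},\bar x_{i,s}$ multiply to $\pm 2\prod_{j=1}^{s}x_{i,j}$. Combining these $s$-fold products over $i\in J$ gives $sd$ zero-divisors whose total product equals $\pm 2^{d}\prod_{j=1}^{s}\bigl(\prod_{i\in J}x_{i,j}\bigr)$, which is nonzero in $H^{*}(X^s;\mathbb{Q})$ because $J\in\mathcal{K}_X$ makes each class $\prod_{i\in J}x_{i,j}$ nonzero in $H^{*}(X;\mathbb{Q})$. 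Proposition~\ref{ulbTCn} then yields $\TC_s(X)\ge\zcl_s(H^{*}(X;\mathbb{Q}))\ge sd$.

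For the upper bound, the natural extension of Proposition~\ref{queva} assigns to each $(b_1,\ldots,b_s)\in X^s$ a pair $(P,\varepsilon)$, where $P=(P_1,\ldots,P_n)\in\mathcal{P}$ is the row-indexed tuple of $\pm$-equivalence partitions and $\varepsilon=(\varepsilon_1,\ldots,\varepsilon_n)\in\{0,1\}^n$ flags those rows in which some entry is $\pm e^{0}$. Define the norm $\|(P,\varepsilon)\|:=\sum_{i=1}^{n}(|P_i|-\varepsilon_i)$ and take as $k$-th local domain $D_k:=\bigsqcup_{\|(P,\varepsilon)\|=k}X^{s}_{P,\varepsilon}$ for $k=0,1,\ldots,sd$. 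On each stratum $X^{s}_{P,\varepsilon}$, build the local rule by the row-by-row pivotal scheme of~(\ref{elplanmot}), selecting for the $i$-th row the reparametrised version of $\lambda_0$, $\lambda_1$, or $\lambda_2$ of Example~\ref{TCs S even} according to whether $\varepsilon_i$ is $1$ or $0$ and to the part of $P_i$ indexing the column.

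The main obstacle is the counting inequality $\|(P,\varepsilon)\|\le sd$, which guarantees that only $sd+1$ local domains appear. If $(b_1,\ldots,b_s)\in e_{J_1}\times\cdots\times e_{J_s}$ with each $J_j\in\mathcal{K}_X$, then $i\notin J_j$ forces $b_{ij}=e^{0}$, so the resulting $e^{0}$-class in the $i$-th row is absorbed by $\varepsilon_i$; a short case analysis on whether $i$ lies in all, some but not all, or none of the $J_j$ yields $|P_i|-\varepsilon_i\le |\{j\in[s]:i\in J_j\}|$, and summing over $i$ gives $\|(P,\varepsilon)\|\le\sum_{j=1}^{s}|J_j|\le sd$. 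The remaining two verifications, that the $D_k$ are topological disjoint unions of ENRs (so that each $D_k$ is itself an ENR and continuity on $D_k$ reduces to continuity on each stratum) and that the planner never leaves $X$, are handled by direct adaptations of Proposition~\ref{interxp} together with Lemma~\ref{auxiliando}, and of Proposition~\ref{noleave}, the $\varepsilon$-parameter contributing only a few mechanical subcases that are closed under the same closure-inheritance arguments already used in the single even-sphere case.
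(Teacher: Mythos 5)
Your proposal is essentially correct and follows the same two-pronged strategy as the paper: the lower bound comes from exhibiting a nonzero product of $sd$ zero-divisors (your choice $\bar x_{i,2}\cdots\bar x_{i,s-1}\bar x_{i,s}^{\,2}$ per row is a valid variant of the paper's $(\overline{\epsilon_{\delta_i}})^s$, both giving $\pm\text{const}\cdot\epsilon_J^{\otimes s}\neq0$), and the upper bound comes from the pivotal motion planner over the stratification of $X^s$ by partition type $P$ together with the flag of which rows contain a $\pm e^0$ entry, with your norm $\sum_i(|P_i|-\varepsilon_i)$ exactly matching the paper's $\N(P,k)=\sum_i|P_i|-k$ and your counting inequality proved the same way. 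One point worth flagging: the pivotal rule $\varphi$ of~(\ref{elplanmot}) is \emph{not} continuous on $X^s_{P,\varepsilon}$ alone --- one must further refine by the sign data $\beta$ and, within each flagged row, by whether the $\pm e^0$ occurs in the pivot part $\alpha^i_1$ (the paper does this via~(\ref{tdu4}),~(\ref{tdu10}) and Proposition~\ref{construfinal}); your appeal to Lemma~\ref{auxiliando} and to closure-inheritance does point at the right ingredients, but the actual strata carrying a continuous local rule are finer than the $X^s_{P,\varepsilon}$ on which you claim to build it.
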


\rojo{The inequality $s(1+\dim(\mathcal{K}_X))\leq\TC_s(X)$ will be dealt with in Section~\ref{sectres} using cohomological methods; in the rest of this subsection we prove  the inequality $\TC_s(X)\leq s(1+\dim(\mathcal{K}_X))$ by constructing an explicit motion planner with $1+s(1+\dim(\mathcal{K}_X))$ local domains---given by the sets in~(\ref{tdu3}) below.}

\medskip
\rojo{As in previous constructions, we think of an element} $(b_1, \ldots, b_s) \in X^s$ \rojo{with} $b_{j}=(b_{1j}, \ldots, b_{nj})$, $j \in [s]$, \rojo{as an $n\times s$ matrix whose $(i,j)$ coordinate is $b_{ij}\in\mathbb{S}(k_i)$}. \rojo{For} $P \in \mathcal{P}$ and $k\in\rojo{[n]_0}$, \rojo{set} $\N(P, k):= \sum_{i=1}^n |P_i|-k$, the norm of the pair $(P,k)$, and 
$$
X^{s}_{P, k}:=X^s_P \cap \left\{(b_{1}, \ldots, b_s) \in  \rojo{\mathbb{S}(k_1,\ldots,k_n)^s} \;\; \Big| \!\!\!\!\!\begin{array}{cl}&\,(b_{i1}, \ldots, b_{is}) \rojo{{}\in T_{1,k_i}} \mbox{ for}\\ &\text{ exactly $k$ indexes $i \in \rojo{[n]}$}\end{array} \right\}
$$
\rojo{where $T_{1,k_i}=\left\{(x_1, \ldots, x_s) \in \rojo{\mathbb{S}(k_i)^s}\,\, | \,\, x_j=\rojo{\pm}e^{\rojo{0}},\,\, \text{for some}\,\, \rojo{j\in[s]}\right\}$. The local domains we propose are given by}
\begin{equation}\label{tdu3}
W_r= \bigcup_{\N(P, k)=r} X^{s}_{P, k}.
\end{equation}
\rojo{By~(\ref{Pcardinality}), the norm $\N(P, k)$} \rojo{is the number of} ``row'' \rojo{$G$-}orbits \rojo{different from that of $e^0$ in any matrix $(b_1,\ldots,b_s)\in X^s_{P,k}$}. Therefore \rojo{the sets $W_r$ with $r\in[s(1+\dim(\mathcal{K}_X))]_0$ yield a pairwise disjoint cover of} $X^s$. \rojo{Our task then is to show:}

\begin{proposition}\label{tarea1.1}
\rojo{Each $W_r$ is an ENR admitting a local rule.}
\end{proposition}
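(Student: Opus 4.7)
The plan follows the two-step template used earlier for the odd case (Propositions~\ref{interxp} and~\ref{noleave}) and for a single even sphere (Proposition~\ref{queva}). First I would show that~(\ref{tdu3}) expresses $W_r$ as a topological disjoint union of ENR's, so that $W_r$ itself is an ENR by~\cite[Proposition~IV.8.10]{Dold}. Then, piece by piece, I would construct a local rule on each $X^s_{P,k}$, obtaining a local rule on $W_r$ by juxtaposition.

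For the topological disjoint union property, the task is to verify $\overline{X^s_{P,k}}\cap X^s_{P',k'}=\emptyset=X^s_{P,k}\cap \overline{X^s_{P',k'}}$ whenever $(P,k)\neq(P',k')$ and $\N(P,k)=\N(P',k')$. The key ingredients are: (a) passing to a limit in $X^s_P$ can only coarsen the row partitions, so $|P|$ cannot increase in the closure; and (b) the condition that row $i$ lies in $T_{1,k_i}$ (some entry equals $\pm e^0$) is closed, so the number $k$ of singular rows cannot decrease under limits. Combined with $|P|-k=|P'|-k'$, these two monotonicities force $|P|=|P'|$ and $k=k'$ in the limit, reducing the problem to the case $P\neq P'$ with $|P|=|P'|$ already handled in the proof of Proposition~\ref{interxp}.

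To build a local rule on each $X^s_{P,k}$, I would further refine it into a topological disjoint union parametrized by (i) a datum $\beta=(\beta^1,\ldots,\beta^n)$ with $\beta^i\subseteq\alpha^i_1-\{1\}$ recording which columns of the first block of row $i$ are actually equal to (rather than merely $\pm$ of) $b_{i1}$; and (ii) for each singular row $i$, a specification of whether $b_{i1}\in\{\pm e^0\}$ and, if so, which pole, together with the pole type of any singular block of $P_i$ that does not contain the column $1$. That this refinement is again a topological disjoint union follows by combining the $\beta$-argument from Subsection~\ref{suboddmotion} with the closure argument of Lemma~\ref{auxiliando} applied row by row. On each such atom, the $B_0/B_1/B_2$-type of every pair $(b_{i1},b_{ij})$ from Example~\ref{TCs S even} is constant, so I can define the local section coordinate-wise by sending $(b_{i1},b_{ij})$ along the reparametrized and delayed version of $\lambda_0$, $\lambda_1$, or $\lambda_2$, with delay $t_{b_{i1}}=\frac{1}{2}-d(b_{i1},e^0)$, mirroring~(\ref{elplanmot}). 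Continuity on each atom is automatic because the path type in every entry is constant.

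The main obstacle will be verifying that the resulting motion never leaves $X$. This is the direct analogue of Proposition~\ref{noleave}: writing the starting matrix in $e_{J_1}\times\cdots\times e_{J_s}$, the delay holds any row $i$ with $b_{i1}=e^0$ at $e^0$ throughout $[0,1/2]$, so the image lies in $\overline{e_{J_1}}\subseteq X$ on this interval; and any row $i$ with $b_{ij}=e^0$ reaches $e^0$ at exactly time $t_{b_{i1}}+d(b_{i1},e^0)=\frac{1}{2}$ and then remains there, so the image lies in $\overline{e_{J_j}}\subseteq X$ on $[1/2,1]$. The only novelty in the even case is the possible use of the pole-to-pole path $\lambda_0$; since $\lambda_0$ runs along a preassigned meridian and, thanks to the delay, is executed entirely inside the interval $[t_{b_{i1}},t_{b_{i1}}+\frac{1}{2}]\subseteq[0,1]$ adapted to whichever of the two poles it starts from, the same bookkeeping shows that the image lies in $\overline{e_{J_1}}$ on $[0,1/2]$ and in $\overline{e_{J_j}}$ on $[1/2,1]$, hence in $X$.
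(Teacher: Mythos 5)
Your proposal is correct and essentially mirrors the paper's argument: show that~(\ref{tdu3}) is a topological disjoint union via a closure-separation lemma (Lemma~\ref{separadotes}), refine each $X^s_{P,k}$ further into pieces on which the $B_0/B_1/B_2$-type of every pair $(b_{i1},b_{ij})$ is constant, define the local rule coordinate-wise using the delayed-and-normalized $\lambda_i$'s, and repeat the $\overline{e_{J_1}}$-then-$\overline{e_{J_j}}$ argument of Proposition~\ref{noleave} to confirm the motion stays inside $X$. Two minor points of divergence, neither a gap: (1) your monotonicity argument ($|P|$ cannot increase, $k$ cannot decrease under limits, so equality of norms forces $|P|=|P'|$, $k=k'$) is a slightly streamlined packaging of the paper's case split on $k$ versus $k'$ in Lemma~\ref{separadotes}; and (2) your fine decomposition records strictly more data than the paper's $(P,\beta,I,\varepsilon)$ atoms $X^s_{P,\beta}\cap T_\varepsilon(I)$ --- in particular which pole $b_{i1}$ equals and the pole type of non-first singular blocks --- but this extra refinement is unnecessary since $\lambda_0$ is already continuous on the two-point discrete set $B_0$ and the rule only ever inspects the pair $(b_{i1},b_{ij})$; it does no harm, as a finer partition of an already-topological disjoint union remains topological.
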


Our proof of Proposition~\ref{tarea1.1} \colorito{depends on showing that~(\ref{tdu3}) is a topological disjoint union (Lemma~\ref{separadotes} below) and that each piece $X^s_{P,k}$ admits a suitably finer topological decomposition ((\ref{tdu4}),~(\ref{tdu10}), and Proposition~\ref{construfinal} below).}

\begin{lema}\label{separadotes}
\rojo{For} $P, P' \in \mathcal{P}$ and $k , k' \in \rojo{[n]_0}$ \rojo{with} $N(P, k)= N(P', k')$ and $(P, k) \neq (P', k')$, $$\overline{X^s_{\rojo{P,k}}}\cap X^s_{\rojo{P', k'}}= \emptyset = X^s_{\rojo{P, k}}\cap \overline{ X^s_{\rojo{P',k'}}} $$
\end{lema}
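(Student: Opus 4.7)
The plan is to reduce the lemma to the already-established Proposition~\ref{interxp}. The key observation is that two kinds of conditions define closed subsets of $X^s$ and hence are inherited by limits: (i)~each set $T_{1,k_i}$ is a closed subset of $\mathbb{S}(k_i)^s$, being a finite union of products with $\{\pm e^0\}$ in one coordinate, so whether the $i$-th row lies in $T_{1,k_i}$ is preserved under limits of sequences in which that row always lies in $T_{1,k_i}$; and (ii)~the relation ``$b_{ij}=\pm b_{i\ell}$'' cuts out a closed subset of $\mathbb{S}(k_i)^s$, so the row-wise partition $P_i$ associated with a convergent sequence can only become coarser in the limit, and in particular $|P_i|$ can only decrease.

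Assuming toward a contradiction that there is a sequence $b^{(m)}\in X^s_{P,k}$ converging to some $b\in X^s_{P',k'}$, I would first pass to a subsequence along which the set $K\subseteq[n]$ of rows $i$ with row $i$ of $b^{(m)}$ in $T_{1,k_i}$ is independent of $m$; this is possible because $[n]$ has only finitely many subsets. By~(i), this $K$ is contained in the analogous size-$k'$ set for the limit $b$, so $k=|K|\leq k'$. By~(ii) applied row by row, $|P_i|\geq|P'_i|$ for every $i\in[n]$.

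Next I would combine these inequalities with the norm hypothesis $\sum_i|P_i|-k=\sum_i|P'_i|-k'$ to obtain $\sum_i(|P_i|-|P'_i|)=k-k'\leq 0$. Since the left-hand side is non-negative, both sides must vanish, forcing $k=k'$ and $|P_i|=|P'_i|$ for every $i$; in particular $|P|=|P'|$ in the sense of~(\ref{lanormadeP}). Because $(P,k)\neq(P',k')$ but $k=k'$, we must have $P\neq P'$, and then $b^{(m)}\in X^s_P$ together with $b\in X^s_{P'}$ would exhibit a point of $\overline{X^s_P}\cap X^s_{P'}$ with $|P|=|P'|$ and $P\neq P'$, directly contradicting Proposition~\ref{interxp}. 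The second emptiness statement follows by the symmetric argument obtained by swapping the roles of $(P,k)$ and $(P',k')$.

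No genuine obstacle is expected here; the argument is essentially a short reduction in which the norm constraint collapses the two kinds of inequalities to equalities, returning us to the setting already handled in Proposition~\ref{interxp}. The only mild bookkeeping step is the pigeonhole extraction of a subsequence with constant $K$, which is harmless.
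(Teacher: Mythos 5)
Your proof is correct and uses the same ingredients as the paper's: closedness of the $T_{1,k_i}$-membership condition (so a limit cannot drop below $k$ rows hitting $\pm e^0$), closedness of the orbit relation (so each row-partition can only coarsen in the limit, $|P_i|\geq|P'_i|$), and the reduction to Proposition~\ref{interxp} once $k=k'$ is forced. The only difference is organizational: you derive the two inequalities $k\leq k'$ and $|P_i|\geq|P'_i|$ from a single hypothesized limit point and let the norm constraint $\sum_i|P_i|-k=\sum_i|P'_i|-k'$ collapse them simultaneously, whereas the paper does casework on the sign of $k-k'$, handling the two emptiness assertions by separate closedness observations when $k>k'$ and invoking Proposition~\ref{interxp} only in the $k=k'$ case; the mathematics is the same.
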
  
\begin{proof}
\rojo{Write} $P=(P_1, \ldots, P_{\rojo{n}})$ and $\rojo{P'}=(P_1', \ldots, P'_{\rojo{n}})$ \rojo{so that, by hypothesis,} $\sum_{i=1}^n |P_i| -k = \sum_{i=1}^n |P'_i| -k'.$ If $k > k'$, then  $\overline{X^s_{\rojo{P,k}}}\cap X^s_{\rojo{P',k'}}= \emptyset$, \rojo{and since} $\sum_{i=1}^n |P_i| > \sum_{i=1}^n |P'_i|$ \rojo{is forced, we also get} $X^s_{\rojo{P, k}}\cap \overline{ X^s_{\rojo{P',k'}}} = \emptyset$. If $k=k'$, then $|P|= |P'|$ with $P \neq P'$ and, \rojo{just as for~(\ref{otravez}), the argument starting in the second paragraph of the proof of Proposition \ref{interxp} yields the conclusion.}
\end{proof}

\rojo{Next we work with a fixed pair} $(P, k) \in \mathcal{P} \times \rojo{[n]_0}$ with $P=(P_1, \ldots, P_n)$ \rojo{and where each} $P_i=\{\alpha_1^i, \ldots, \alpha^{i}_{n(P_i)}\}$ \rojo{is ordered as described at the beginning of this section.} For \rojo{a subset} $I\subseteq [n]$ \rojo{consider} the set $T_I=\{ (b_1, \ldots, b_s) \in X^s\, \, | \,\, (b_{i1}, \ldots, b_{is})\rojo{{}\in T_{1,k_i}} \, \,\text{if and only if}\,\, i \in I\}.$ \rojo{Then}~(\ref{X^s_P}) \rojo{yields a topological disjoint union}
\begin{equation}\label{tdu4}
X^s_{P, \, k}= \bigsqcup_{\beta, I} \left(X^s_{P, \beta} \cap T_I\right)
\end{equation}
\rojo{running over subsets $I\subseteq[n]$ of cardinality $k$, and $n$-\azz{tuples} $\beta=\rojo(\beta^1, \ldots, \beta^n\rojo)$ of (possibly empty) subsets} $\beta^i \subseteq \alpha^i_1- \{1\}$. \rojo{Besides, as suggested by~(\ref{tdu2}) in the proof of Proposition~\ref{queva}, it is convenient to decompose even further each piece in~(\ref{tdu4})}. For each $i \in \rojo{[n]}$, let 
\begin{eqnarray}\label{T_01}
\rojo{T}_{0}( \alpha^i_1)&=&\{(b_1, \ldots, b_s) \in \rojo{X^s}\, \, | \,\, b_{ij} \neq \rojo{\pm} e^{\rojo{0}} \mbox{ for all }j \in \alpha^i_1  \}, \nonumber\\ 
\rojo{T}_{1}( \alpha^i_1)&=&\{(b_1, \ldots, b_s) \in \rojo{X^s}\, \, | \,\, b_{ij} = \rojo{\pm}e^{\rojo{0}}\,\, \text{for some}\,\,j \in \alpha^i_1 \}
\end{eqnarray}
\rojo{and, for $I\colorito{{}= \{\ell_1, \ldots, \ell_{|I|}\}}\subseteq[n]$ and $\varepsilon =(t_1, \ldots, t_{|I|})\in\{0,1\}^{|I|}$,}
$$T_{\varepsilon}(I)= \rojo{T_I\cap{}}\bigcap^{\colorito{|I|}}_{\colorito{ i =1} } \rojo{T}_{t_i}(\alpha^{\colorito{\ell_i}}_1).$$ \rojo{In these terms there is an additional topological disjoint union decomposition}
\begin{equation}\label{tdu10}
X^s_{P, \beta} \cap T_I = \bigsqcup_{\varepsilon \in\rojo{\{0,1\}^{|I|}}} \left(X^s_{P, \beta} \cap T_{\varepsilon}(I)\right).
\end{equation}

\rojo{Proposition~\ref{tarea1.1} is now a consequence of~(\ref{tdu4}),~(\ref{tdu10}), Lemma~\ref{separadotes}, and the following result:} 

\begin{proposition}\label{construfinal}
\rojo{For $P$, $\beta$, I, and $\varepsilon$ as above, $X^s_{P, \beta} \cap T_{\varepsilon}(I)$ is an ENR admitting a local rule.}
\end{proposition}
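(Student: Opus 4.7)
The plan is to combine the pivotal-delay strategy of Proposition~\ref{noleave} with the three-way rule selection of Proposition~\ref{queva}, applied row-by-row to the matrix $(b_{ij})$ representing a point of $X^s$. The ENR assertion is immediate: $X^s_{P,\beta}\cap T_\varepsilon(I)$ is cut out of the finite CW complex $X^s$ by a finite Boolean combination of equalities and inequalities of the form $b_{ij}=\pm b_{i\ell}$ and $b_{ij}=\pm e^0$, so it is locally closed in $X^s$ and therefore an ENR.

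For the local rule I would reuse the formula of~(\ref{elplanmot}): $\sigma(b_1,\ldots,b_s)=(\varphi_1(b_1,b_1),\ldots,\varphi_s(b_1,b_s))$ with each $\varphi_j$ assembled coordinate-wise and with the $i$-th coordinate path delayed by $t_{b_{i1}}=\tfrac12-d(b_{i1},e^0)$ before being run at unit speed from $b_{i1}$ to $b_{ij}$ in $S^{k_i}$. What needs to change relative to~(\ref{lasigma}) is the rule producing that path. Set it to (a reparametrization of) $\lambda_2(b_{i1},b_{ij})$ whenever $j\notin\alpha_1^i$ or $j\in\beta^i\cup\{1\}$; in the remaining case $j\in\alpha_1^i-(\beta^i\cup\{1\})$, which on $X^s_{P,\beta}$ forces $b_{i1}=-b_{ij}$, use $\lambda_1$ if $i\notin I$ or if $i\in I$ and the $\varepsilon$-entry corresponding to $i$ equals $0$, and use $\lambda_0$ otherwise (i.e.\ $i\in I$ with $\varepsilon$-entry $1$). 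The refinement carried out in~(\ref{tdu4})--(\ref{tdu10}) is precisely what makes this selection locally constant and consistent with the domains of $\lambda_0,\lambda_1,\lambda_2$: for $i\notin I$ no entry of row $i$ hits $\pm e^0$, so $\lambda_1$ is defined; for $i\in I$ with $\varepsilon$-entry $0$ (i.e.\ $(b_1,\ldots,b_s)\in T_0(\alpha_1^i)$) the entries $b_{ij}$ with $j\in\alpha_1^i$ avoid $\pm e^0$, so again $\lambda_1$ applies; and for $i\in I$ with $\varepsilon$-entry $1$ (i.e.\ $(b_1,\ldots,b_s)\in T_1(\alpha_1^i)$), since those entries are mutually $\pm$ of one another and at least one lies in $\{\pm e^0\}$, all of them do, placing the pair in the domain of $\lambda_0$. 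Continuity of $\sigma$ on each piece of~(\ref{tdu10}) is then clear.

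To verify that $\sigma$ takes values in $X^{J_s}$ rather than merely in $\mathbb{S}(k_1,\ldots,k_n)^{J_s}$, I would run verbatim the argument of Proposition~\ref{noleave}: writing $(b_1,\ldots,b_s)\in e_{J_1}\times\cdots\times e_{J_s}$ and splitting each coordinate path at $t=1/2$, the delay freezes the $i$-th coordinate at $e^0$ throughout $[0,1/2]$ whenever $i\notin J_1$, while the new rules (like $\tau_0,\tau_1$ in the odd case) have reached their final positions by time $1/2$, so the $i$-th coordinate sits at $e^0$ throughout $[1/2,1]$ whenever $i\notin J_j$. Consequently $\varphi_j(b_1,b_j)([0,1/2])\subseteq\overline{e_{J_1}}\subseteq X$ and $\varphi_j(b_1,b_j)([1/2,1])\subseteq\overline{e_{J_j}}\subseteq X$.

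The main obstacle is the even-sphere dichotomy between $\lambda_0$ and $\lambda_1$, whose domains are disjoint, so that a wrong choice destroys either the definition or the continuity of $\sigma$. The passage from $X^s_P$ through $X^s_{P,\beta}$ to $X^s_{P,\beta}\cap T_\varepsilon(I)$ is precisely what pre-detects, row by row, whether a basepoint coordinate occurs among the indices $\alpha_1^i$, rendering the choice forced and locally constant. Once this bookkeeping is in place, continuity of the resulting section and the fact that it stays inside $X$ follow from the templates already established in Propositions~\ref{queva} and~\ref{noleave}, after which Lemma~\ref{separadotes} together with the topological disjoint unions~(\ref{tdu3}),~(\ref{tdu4}), and~(\ref{tdu10}) glue the pieces into a local rule on each $W_r$.
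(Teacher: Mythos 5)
Your construction of the local rule is essentially the same as the paper's: you use the normalized versions of $\lambda_0,\lambda_1,\lambda_2$ from Example~\ref{TCs S even} in the pivotal-delay scheme~(\ref{elplanmot}), with the three-way selection of which rule to apply on row $i$ determined by whether $j\in\alpha_1^i-(\beta^i\cup\{1\})$, and if so, by the $\varepsilon$-entry for $i$. Your case split is in fact stated a bit more carefully than in the paper, since you explicitly treat $i\notin I$ (where the domain forces $\lambda_1$); the paper's displayed formula leaves that case implicit. The check that the resulting section stays inside $X^{J_s}$ is likewise the verbatim transfer of Proposition~\ref{noleave} that the paper makes.

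The one genuine gap is your ENR argument: ``locally closed in $X^s$, therefore an ENR'' is not a valid inference in general. Locally closed (even closed) subsets of an ENR need not be ENRs---the topologist's sine curve is a closed subset of $\mathbb{R}^2$ that is not locally contractible, hence not an ENR. To make your argument run, you need more than local closedness; the fact that $X^s_{P,\beta}\cap T_\varepsilon(I)$ is carved out by polynomial equalities and (strict) inequalities makes it a semialgebraic set, which admits a finite CW/triangulated structure and is therefore an ENR. Alternatively, and this is what the paper does, one can observe directly that $X^s_{P,\beta}\cap T_\varepsilon(I)$ is homeomorphic to the product of a finite discrete space (recording the sign/repetition data of $P$ and $\beta$) with a product of punctured spheres (the free coordinates remaining after discarding those forced to equal $\pm e^0$), which is an ENR by inspection. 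Either fix closes the gap; as written, the ``therefore'' is unjustified.
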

\begin{proof}
\rojo{The ENR property follow since, in fact, $X^s_{P, \beta} \cap T_{\varepsilon}(I)$ is homeomorphic to the Cartesian product of a finite discrete space and a product of punctured spheres. Indeed, the information encoded by $P$ and $\beta$ produces the discrete factor, as coordinates in a single $G$-orbit are either repeated (e.g.~in the case of $\beta$) or sign duplicated. Besides, after ignoring such superfluous information as well as all $e^0$-coordinates (determined by $I$ and $\varepsilon$), we are left with a product of punctured spheres.}

\medskip
\rojo{The needed local rule can be defined following the algorithm at the end of Subsection~\colorito{\ref{suboddmotion}.} Explicitely, let $\rho_i$ ($i=0,1,2$) denote the local rules obtained by normalizing the corresponding $\lambda_i$ (defined in Example~\ref{TCs S even}) in the same manner as the local rules $\tau_i$ were obtained right after the proof of Proposition~\ref{interxp} from the corresponding $\phi_i$. Then consider the (non-continuous) global section $\varphi\colon X^s \to \mathbb{S}\rojo{(k_1,\ldots, k_n)}^{J_s}$ defined through the algorithm following~(\ref{elplanmot}), except that~(\ref{lasigma}) gets replaced by $$\sigma(b_{i1}, b_{ij})=\rho_m(b_{i1}, b_{ij}), \text{ if } (b_{i1}, b_{ij}) \in \colorito{B_m}\text{ for }m\in\{0,1,2\}$$ where the domains $\colorito{B_m}$ are now those defined in Example~\ref{TCs S even}. As in the previous subsection, the point is that the restriction of $\varphi$ to $X^s_{P, \beta} \cap T_{\varepsilon}(I)$ is continuous since, in that domain, the latter equality can be written as
$$
\sigma=\begin{cases}
\rho_2, &\mbox{if \ }j \in \left([s]-\alpha_1^i\right)\cup\beta^i\cup\{1\};\\
\rho_1, &\mbox{if \ }j \in \alpha_1^i - \left(\beta^i\cup\{1\}\right)\text{ and } t_i=0;\\
\rho_0, & \mbox{if \ }j \in \alpha_1^i - \left(\beta^i\cup\{1\}\right)\text{ and } t_i=1.
\end{cases}
$$
In addition, the proof of Proposition~\ref{noleave} applies word for word to show that the image of $\varphi$ is contained in $X^{J_s}$.}
\end{proof}

\begin{remark}{\em
\colorao{The gap noted in Remark~\ref{intento} also holds in~\cite{CohenPruid08} when all the $k_i$ are even. The new situation is subtler in view of an additional gap (pinpointed in~\cite[Remark~2.3]{morfismos}) in the proof of~\cite[Theorem~6.3]{CohenPruid08}. Of course, the detailed constructions in this section fix the problem and generalize the result.}}
\end{remark}

\section{Zero-divisors cup-length}\label{sectres}
\rojo{We now show that, for a subcomplex $X$ of $\mathbb{S}(k_1,\ldots,k_n)$ where all the $k_i$ have the same parity, the cohomological lower bound for $\TC_s(X)$ in Proposition~\ref{ulbTCn} is optimal and agrees with the upper bound coming from our explicit motion planners in the previous section. Throughout this section we use cohomology with rational coefficients, writing $H^*(X)$ as a shorthand of $H^*(X;\mathbb{Q})$.} 

\medskip
\rojo{Recall $H^*(\mathbb{S}(k_1,\ldots,k_n))$} is an exterior algebra \rojo{$E(\epsilon_1,\ldots,\epsilon_n)$ where $\epsilon_i$ corresponds to the $\mathbb{S}(k_i)$ factor, so that $\deg(\epsilon_i)=k_i$.} For $J=\{j_1, \ldots, j_k\} \rojo{{}\subseteq [n]}$, let $\epsilon_J= \epsilon_{j_1}\cdots\epsilon_{j_k}$. \rojo{The cohomology ring $H^*(X)$ is a quotient of $E(\epsilon_1,\ldots,\epsilon_n)$:}

\begin{proposition}{\label{H^*(X)}}
\rojo{For} a subcomplex $X$ of the \rojo{minimal} CW-decomposition of \rojo{$\hspace{.4mm}\mathbb{S}(k_1\ldots,k_n)$,} the cohomology ring $H^*(X)$ is the quotient of the exterior algebra \rojo{$E(\epsilon_1,\ldots,\epsilon_n)$} by the \kp{monomial} ideal $I_X$ generated by those $\epsilon_J$ \rojo{for which} $e_J$ is not a cell of $X$. 
\end{proposition}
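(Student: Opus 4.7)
The plan is to compute $H^*(X)$ through the minimal cellular cochain complex and compare it with $H^*(\mathbb{S}(k_1,\ldots,k_n))$ via the map $i^*\colon H^*(\mathbb{S}(k_1,\ldots,k_n))\to H^*(X)$ induced by the inclusion $i\colon X\hookrightarrow \mathbb{S}(k_1,\ldots,k_n)$. The first step is to note that the cellular chain complex of the minimal CW structure on $\mathbb{S}(k_1,\ldots,k_n)$ has vanishing differentials. Indeed, for each $S^{k_i}$ the two-cell minimal structure has $\partial=0$, and the product CW structure of $\mathbb{S}(k_1,\ldots,k_n)$ is the Künneth tensor product of these, whose differential is identically zero. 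Dually, $C^*(\mathbb{S}(k_1,\ldots,k_n);\mathbb{Q})$ has $\delta=0$, so $H^*(\mathbb{S}(k_1,\ldots,k_n))$ is naturally identified with $C^*(\mathbb{S}(k_1,\ldots,k_n);\mathbb{Q})$, with $\epsilon_J$ corresponding (up to sign) to the cochain dual to the cell $e_J$.

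Next, since $X\subseteq\mathbb{S}(k_1,\ldots,k_n)$ is a CW subcomplex, for every cell $e_J$ of $X$ the attaching map factors through $X$; hence $C_*(X;\mathbb{Q})$ is a sub-chain-complex of $C_*(\mathbb{S}(k_1,\ldots,k_n);\mathbb{Q})$, spanned by $\{e_J\mid J\in\mathcal{K}_X\}$, and inherits vanishing differential. Dualizing over $\mathbb{Q}$ yields a surjective cochain map $C^*(\mathbb{S}(k_1,\ldots,k_n);\mathbb{Q})\to C^*(X;\mathbb{Q})$ whose kernel, as a vector space, is the linear span of $\{\epsilon_J\mid J\notin\mathcal{K}_X\}$. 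Because both cochain complexes have zero differential, this map is exactly $i^*$ on cohomology: $i^*$ is a surjective ring map whose kernel is, as a $\mathbb{Q}$-vector space, spanned by $\{\epsilon_J\mid J\notin\mathcal{K}_X\}$.

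The final step is to match this linear kernel with the monomial ideal $I_X$ of the statement. Being a CW subcomplex, $X$ contains $\overline{e_J}$ whenever it contains $e_J$, so $\mathcal{K}_X$ is closed under taking subsets, i.e., an abstract simplicial complex. In the exterior algebra, $\epsilon_J\cdot\epsilon_K=\pm\epsilon_{J\cup K}$ when $J\cap K=\emptyset$ and vanishes otherwise, so the ideal $I_X$ is spanned as a $\mathbb{Q}$-vector space by those $\epsilon_L$ such that $L\supseteq J$ for some $J\notin\mathcal{K}_X$. The downward-closedness of $\mathcal{K}_X$ translates this condition into simply $L\notin\mathcal{K}_X$, whence $I_X=\ker(i^*)$ as vector spaces. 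Since $i^*$ is a ring homomorphism, this equality yields the claimed ring isomorphism $H^*(X)\cong E(\epsilon_1,\ldots,\epsilon_n)/I_X$.

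There is no really substantial obstacle here; the only point that deserves care is the combinatorial identification in the third step, where one must use simultaneously that $\mathcal{K}_X$ is downward closed and that $I_X$ is a monomial ideal in the exterior algebra, to see that the ideal it generates does not pick up any additional $\epsilon_L$ with $L\in\mathcal{K}_X$.
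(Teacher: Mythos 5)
Your proof is correct, and it takes a genuinely different route from the paper: the paper simply cites Bahri--Bendersky--Cohen--Gitler (Theorem 2.35 of \cite{BahBende10}), which computes the cohomology of general polyhedral products $Z(\mathcal{K};(\underline{X},\underline{A}))$ via a stable splitting into smash products, whereas you give a self-contained, elementary computation using the minimal cellular cochain complex. The key observations you exploit---that the product cell structure on $\mathbb{S}(k_1,\ldots,k_n)$ has identically vanishing cellular differential, that the CW subcomplex $X$ therefore also has trivial differential so that $i^*$ is literally the cellular restriction map, and that downward-closedness of $\mathcal{K}_X$ makes the linear kernel coincide with the monomial ideal $I_X$---are all sound; the only minor point of caution, which you handle correctly by treating $i^*$ as a ring map and only using the cochain model for the linear identification, is that the cellular cochain complex does not directly see the cup product. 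Your approach has the advantage of being transparent for this special case; the citation's approach buys generality (arbitrary pairs $(X_i,A_i)$) at the cost of much heavier machinery.
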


For a proof \rojo{(in a more general context)} of this proposition see \cite[Theorem 2.35]{BahBende10}. \rojo{In particular, an additive basis for $H^*(X)$ is given by the products $\epsilon_J$ with $e_J$ a cell of $X$. We will work with the corresponding tensor power basis for $H^*(X^s)$.}

\begin{remark}\label{elhandlingparodd}\rouge{\em
\colorito{In} the next two results, the hypothesis of having a fixed parity for all the $k_i$ \colorito{will be} crucial \colorito{when handling} products of zero divisors in $H^*(X^s)$. Indeed, a typical such element has the form
$$
z=c_1\cdot\epsilon_i\otimes1\otimes\cdots\otimes1+
c_2\cdot1\otimes\epsilon_i\otimes1\otimes\cdots\otimes1+\cdots+
c_s\cdot1\otimes\cdots\otimes1\otimes\epsilon_i
$$
for $i\in[n]$ and $c_1,\ldots,c_s\in\mathbb{Q}$ with $c_1+\cdots+c_s=0$. Then, by graded commutativity, $z^2$ is forced to vanish when $k_i$ is odd. However $z^s\neq0$ if $k_i$ is even and $c_j\neq0$ for all $j\in[s]$. 
}\end{remark}

\rojo{Proposition~\ref{ulbTCn} and the following result complete the proof of Theorem~\ref{tcsX odd}.}

\begin{proposition}\label{cotaabajoimpar}
Let $X$ \rojo{be as in Proposition~$\ref{H^*(X)}$. If all of the $k_i$ are odd,} then $$\N^{\rojo{s}}(X)\leq\zcl_s(H^*(X)).$$
\end{proposition}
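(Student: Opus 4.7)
The plan is to exhibit $\N^s(X)$ cohomological zero-divisors in $H^*(X^s;\mathbb{Q})$ whose cup product is nonzero. First I choose simplices $J_1,\ldots,J_s$ of $\mathcal{K}_X$ achieving the maximum in Proposition~\ref{difdef}, so that $\N^s(X)=\sum_{\ell=1}^s|J_\ell|-|\bigcap_\ell J_\ell|$, and bring back the sets $I_\ell=\bigcap_{m<\ell}J_m-J_\ell$ ($\ell\geq 2$) that appear in the proof of that proposition. They are pairwise disjoint, contained in $J_1$, satisfy $I_\ell\cap J_\ell=\emptyset$, and $\N^s(X)=\sum_{\ell=2}^s(|I_\ell|+|J_\ell|)$.

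For $\ell\in\{2,\ldots,s\}$ and $i\in[n]$ a vertex of $\mathcal{K}_X$, consider the standard zero-divisor
$$
\bar\epsilon_{i,\ell}:=1^{\otimes(\ell-1)}\otimes\epsilon_i\otimes 1^{\otimes(s-\ell)}-\epsilon_i\otimes 1^{\otimes(s-1)}\in H^*(X^s;\mathbb{Q}),
$$
a nonzero element of $\ker d_s^*$ (each summand is sent to $\epsilon_i$). My candidate, in some fixed ordering of the factors, is
$$
P=\prod_{\ell=2}^s\prod_{i\in I_\ell\cup J_\ell}\bar\epsilon_{i,\ell},
$$
a cup product of exactly $\N^s(X)$ zero-divisors. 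To show $P\neq 0$, I will expand each factor as a difference of two monomials and, using Proposition~\ref{H^*(X)} together with Künneth, read off the result in the tensor-power basis $\{\epsilon_{L_1}\otimes\cdots\otimes\epsilon_{L_s}\}$ of $H^*(X^s)$. Each expansion term is indexed by a choice, for each pair $(i,\ell)$, between the position-$\ell$ summand (which contributes $i$ to $L_\ell$) and the position-$1$ summand (which contributes $i$ to $L_1$). Because every $k_i$ is odd, $\epsilon_i^2=0$, so any $L_j$ containing a repeated index yields a vanishing basis element; this is precisely the mechanism flagged in Remark~\ref{elhandlingparodd}.

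Next I single out the distinguished choice: take the position-$\ell$ summand whenever $i\in J_\ell$, and the position-$1$ summand whenever $i\in I_\ell$. The resulting term is, up to sign,
$$
\epsilon_{L_1}\otimes\epsilon_{J_2}\otimes\cdots\otimes\epsilon_{J_s},\qquad L_1=\bigsqcup_{\ell=2}^s I_\ell\subseteq J_1,
$$
and this is a nonzero basis element since each $L_j$ is a simplex of $\mathcal{K}_X$ (the $I_\ell$'s are pairwise disjoint and $L_1$ sits inside the simplex $J_1$; every other $L_j=J_j$ is a simplex by assumption).

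The main thing to verify—and the only potentially delicate step—is that no other choice of summands produces the same basis element, so that its coefficient in $P$ is genuinely $\pm 1$ and no cancellation takes place. Matching $L_\ell=J_\ell$ for every $\ell\geq 2$ forces, for each pair $(i,\ell)$ in the product, the position-$\ell$ summand to be chosen precisely when $i\in J_\ell$; the disjointness $I_\ell\cap J_\ell=\emptyset$ then forces the position-$1$ summand to be chosen precisely when $i\in I_\ell$, uniquely recovering the distinguished choice. Hence $P\neq 0$ in $H^*(X^s)$, and $\N^s(X)\leq\zcl_s(H^*(X))$ as required.
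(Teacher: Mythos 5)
Your proof is correct and is essentially the paper's own argument: your $\bar\epsilon_{i,\ell}$ is $-\epsilon_i(\ell)$ and your product $P=\prod_{\ell=2}^s\prod_{i\in I_\ell\cup J_\ell}\bar\epsilon_{i,\ell}$ coincides (up to overall sign) with the paper's $\gamma(J_1,J_2)\cdots\gamma(J_1,\ldots,J_s)$, and you identify the same distinguished nonvanishing basis term $\pm\,\epsilon_{\sqcup_\ell I_\ell}\otimes\epsilon_{J_2}\otimes\cdots\otimes\epsilon_{J_s}$ and justify its coefficient $\pm1$ by the same "matching $L_\ell=J_\ell$ for $\ell\geq 2$ pins down the choice" observation. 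The only cosmetic difference is that you fully expand each $\gamma$ into a product of single-variable zero-divisors indexed by $I_\ell\cup J_\ell$ rather than grouping them by $\ell$, which makes the bookkeeping slightly more explicit but is the same computation.
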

\begin{proof}
Let $H_X=H^*(X^s)=[H^*(X)]^{\otimes s}$. For $u \in H^*(X)$ and $2 \leq \rojo{\ell} \leq s$, let $$u(\rojo{\ell})=\underbrace{u \otimes 1 \otimes \cdots \otimes 1}_{s \text{ \rojo{factors}}} -\underbrace{1 \otimes \cdots \otimes 1 \otimes \overset{\,\rojo{\ell}}{u} \otimes 1 \otimes \cdots \otimes 1}_{s \text{ \rojo{factors}}} \in H_{X} $$ where an $\rojo{\ell}$ on top of a tensor factor indicates the coordinate where the factor appears. \rojo{Take a cell} $e_{J_1} \times e_{\rojo{J}_2} \times \cdots \times e_{J_s} \az{{}\subseteq{}}  X^s $, $J_1, \ldots, J_s\rojo{{}\subseteq[n]}$. For $2 \leq \ell \leq s$, let
\begin{center}
$\begin{array}{ll}
\gamma( J_1, \ldots, J_{\ell}) &= \displaystyle{\prod_{j \in \Big(\bigcap_{m=1}^{\ell-1} J_m - J_{\ell} \Big) \cup J_{\ell} } \epsilon_j (\ell)}
\\
& =\displaystyle{ \sum_{\phi_{\ell} \subseteq \Big(\bigcap_{m=1}^{\ell-1} J_m - J_{\ell} \Big) \cup J_{\ell}} \pm \epsilon_{\phi^c_{\ell}} \otimes 1 \otimes \cdots \otimes 1 \otimes \overset{\ell\hspace{2mm}}{\epsilon_{\phi_{\ell}}} \otimes 1 \otimes \cdots \otimes 1}
\end{array}$
\end{center}
where $\phi_{\ell}^c$ \rojo{stands for} the complement of $\phi_{\ell}$ in $\Big(\bigcap_{m=1}^{\ell-1} J_m - J_{\ell} \Big) \cup J_{\ell}$. \rojo{It suffices to prove the non-triviality of the} product of $\N_X(J_1, \ldots, J_s)$ zero-divisors 
\begin{equation}\label{prodzcls}
\gamma (J_1, J_2) \cdots \gamma (J_1, \ldots, J_s)= \sum_{\phi_ 2, \ldots, \phi_s} \pm \epsilon_{\phi_2^c}\cdots\epsilon_{\phi^c_s} \otimes \epsilon_{\phi_2} \otimes \cdots \otimes \epsilon_{\phi_s}
\end{equation} 
where the sum runs over all $\phi_\ell \rojo{{}\subseteq{}} \Big(\bigcap_{m=1}^{\ell-1} J_m - J_{\ell} \Big) \cup J_{\ell} $ with $2 \leq \ell \leq s$. \rojo{With this in mind, note that} the term
\begin{equation}\label{eltermino}
\pm \epsilon_{J_1- J_2}  \cdots \epsilon_{(J_1 \cap \cdots \cap J_{\ell-1})-J_{\ell}} \cdots \epsilon_{(J_1 \cap \cdots \cap J_{s-1})-J_s} \otimes \epsilon_{J_2} \otimes \cdots \rojo{{}\otimes \epsilon_{J_\ell}\otimes\cdots}\otimes\epsilon_{J_s},
\end{equation}
\rojo{which} appears in~(\ref{prodzcls}) \rojo{with} $\phi_\ell=J_\ell$ for $2 \leq  \ell \leq s$, is \rojo{a basis element} because $$\epsilon_{J_1- J_2}  \cdots \epsilon_{(J_1 \cap \cdots \cap J_{\ell-1})-J_\ell} \cdots \epsilon_{(J_1 \cap \cdots \cap J_{s-1})-J_s}= \epsilon_{J_0}$$ with $J_0 \az{{}\subseteq{}} J_1$. \rojo{The non-triviality of~(\ref{prodzcls}) then follows by observing that~(\ref{eltermino})} cannot arise when other summands \rojo{in~(\ref{prodzcls})} are expressed in terms of \rojo{the} basis for $H_X$. \rojo{In fact, each summand
\begin{equation}\label{otrossumandos}
\pm \epsilon_{\phi_2^c} \cdots \epsilon_{\phi^c_s} \otimes \epsilon_{\phi_2} \otimes \cdots \otimes \epsilon_{\phi_s}
\end{equation}
in~(\ref{prodzcls}) is either zero or a basis element and, in the latter case,~(\ref{otrossumandos}) agrees (up to sign) with~(\ref{eltermino}) only if $\phi_{\ell}=J_{\ell}$ for $\ell=2,\ldots,s$.}
\end{proof}

\rojo{Likewise, the proof of Theorem~\ref{TC_s(X) even} is complete by Proposition~\ref{ulbTCn} and the following result:}

\begin{proposition}\label{casiultima}
\rojo{Let $X$ be as in Proposition~$\ref{H^*(X)}$. If all of the $k_i$ are even, then} $$s \rojo{\left(1+\dim(\mathcal{K}_X)\right)} \leq \zcl_s (\rojo{H^*(X)}).$$
\end{proposition}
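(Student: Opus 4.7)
The plan is to exhibit explicit zero-divisors whose product realizes the lower bound. Write $d=1+\dim(\mathcal{K}_X)$; we may assume $d\geq 1$, since otherwise $X$ is a point and the inequality is vacuous. First I would fix a maximal simplex $J=\{j_1,\ldots,j_d\}$ of $\mathcal{K}_X$, so that $\epsilon_J$ is a nonzero class of $H^*(X)$ by Proposition~\ref{H^*(X)}. The goal is then to produce, for each $r\in[d]$, a single zero-divisor $z_r\in H^{k_{j_r}}(X^s)$ whose $s$-th power is a nonzero multiple of $\epsilon_{j_r}^{\otimes s}$; multiplying all $d$ of those $s$-th powers will exhibit $sd$ zero-divisors with nonzero product, yielding $sd\leq\zcl_s(H^*(X))$.

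Concretely, for each $r\in[d]$ I would pick nonzero rationals $c_{r,1},\ldots,c_{r,s}$ satisfying $\sum_{j=1}^{s}c_{r,j}=0$, and set
$$
z_r=\sum_{j=1}^{s}c_{r,j}\bigl(1^{\otimes(j-1)}\otimes\epsilon_{j_r}\otimes 1^{\otimes(s-j)}\bigr).
$$
Since $d_s^*$ sends each summand to $\epsilon_{j_r}$, the zero-sum condition makes $z_r$ a zero-divisor. The crux is then computing $z_r^s$: because $k_{j_r}$ is even, every $\epsilon_{j_r}$ lives in even degree, so the graded-commutative product on $H^*(X)^{\otimes s}=H^*(X^s)$ introduces no signs, and moreover $\epsilon_{j_r}^2=0$. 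By the multinomial theorem, the only monomials in the expansion of $z_r^s$ that survive are those placing exactly one copy of $\epsilon_{j_r}$ in each of the $s$ tensor slots; all $s!$ such monomials agree, yielding
$$
z_r^s=s!\Bigl(\prod_{j=1}^{s}c_{r,j}\Bigr)\epsilon_{j_r}^{\otimes s}.
$$
Multiplying these for $r=1,\ldots,d$ again involves only even-degree classes, so
$$
\prod_{r=1}^{d}z_r^s=(s!)^d\Bigl(\prod_{r,j}c_{r,j}\Bigr)\epsilon_J^{\otimes s},
$$
which is nonzero because $\epsilon_J\neq 0$ in $H^*(X)$.

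The only delicate point---and the reason the plan is confined to the even-parity case---is the non-vanishing of $z_r^s$. As Remark~\ref{elhandlingparodd} anticipates, if $k_{j_r}$ were odd then already $z_r^2$ would vanish by graded commutativity, blocking the iterated argument and forcing the combinatorially much richer strategy pursued in Proposition~\ref{cotaabajoimpar}. Once the even-parity computation is granted, everything else reduces to bookkeeping: locating a top-dimensional simplex of $\mathcal{K}_X$, imposing the zero-sum condition on the $c_{r,j}$, and observing that even degrees wash out all Koszul signs across the tensor factors.
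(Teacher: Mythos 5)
Your proof is correct and is essentially the paper's proof: both fix a maximal simplex $L$ of $\mathcal{K}_X$, take for each vertex of $L$ a degree-one zero-divisor that is a signed combination of the $s$ tensor-slot copies of $\epsilon_{j_r}$, and observe that (since the degrees are even) its $s$-th power is a nonzero multiple of $\epsilon_{j_r}^{\otimes s}$, so the product over all vertices is a nonzero multiple of $\epsilon_L^{\otimes s}$. The only cosmetic difference is that the paper uses the particular zero-sum coefficients $(1,\dots,1,-(s-1))$ (its element $\overline{\epsilon_{\delta_i}}$), whereas you allow any nonzero zero-sum tuple $c_{r,1},\dots,c_{r,s}$; this changes nothing in the argument.
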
  
\begin{proof}
For $u \in H^*(X)$, set $$\rojo{\overline{u}}= \left(\hspace{.5mm}\sum_{i=1}^{s-1} 1 \otimes \cdots \otimes 1 \otimes \overset{i}{u} \otimes 1 \otimes \cdots \otimes 1\right)- 1 \otimes \cdots \otimes 1 \otimes (s-1) u\rojo{{\,}\in H_X.}$$
\rojo{Fix} a maximal cell $e_L$ of $X$ where $L=\{\delta_1, \ldots, \delta_{\ell}\} \subseteq \rojo{[n]}$ \rojo{(so $\ell=1+\dim(\mathcal{K}_X)$)}. \rojo{A straightforward calculation yields,} for \rojo{$i\in[\ell]$,}
$$
(\overline{\epsilon_{\delta_i}})^s=(1-s)s!(\underbrace{\epsilon_{\delta_i} \otimes \cdots \otimes \epsilon_{\delta_i}}_{s \text{ \rojo{factors}}}),
$$
so
$$
\prod_{i=1}^{\ell} (\overline{\epsilon_{\delta_i}})^s= \rojo{\left((1-s)s!\right)^\ell} \underbrace{\epsilon_L \otimes \cdots \otimes \epsilon_L}_{s \text{ \rojo{factors}}}
$$
\rojo{which is a nonzero product of $s\ell$ zero-divisors in $H_X$.}
\end{proof}

\begin{remark}{\em
\rojo{The estimate $s(1+\dim(\mathcal{K}_{\colorao{X}}))\leq\TC_s(X)$ can also be obtained by noticing that, in the notation of the proof of Proposition~\ref{casiultima}, $\mathbb{S}(k_{\delta_1},\ldots,k_{\delta_\ell})\cong \overline{e_L}$ is a retract of $X$ (c.f.~\cite[proof of Proposition~4]{FelixTanre09}).}
}\end{remark}

\colorao{It well known that, under suitable normality conditions, the higher topological complexity of a Cartesian product can be estimated by
\begin{equation}\label{laestimacion}
\zcl_s(H^*(X))+\zcl_s(H^*(Y))\leq \zcl_s(H^*(X\times Y))\leq \TC_s(X \times Y) \leq \TC_s(X) + \TC_s(Y),
\end{equation}
see~\cite[Proposition 3.11]{bgrt} and~\cite[Lemma~2.1]{CoFa11}. Of course, these inequalities are sharp provided $\TC_s=\zcl_s$ for both $X$ and $Y$. In particular, for the spaces dealt with in \colorito{Theorem~\ref{TC_s(X)},} $\TC_s$ is additive in the sense that the higher topological complexity of a Cartesian product is the sum of the higher topological complexities of the factors. This generalizes the known $\TC_s$-behavior of products of spheres, see~\cite[Corollary~3.12]{bgrt}. However, if Cartesian products are replaced by wedge sums, the situation becomes much subtler. To begin with, we remark that Theorem~3.6 and Remark~3.7 in~\cite{MR3267004}, together with~\cite[Theorem~19.1]{farber06}, give evidence suggesting that a reasonable wedge-substitute of~(\ref{laestimacion}) (for $s=2$) would be given by
$$
\max\{\TC_2(X),\TC_2(Y),\cat(X\times Y)\}\leq\TC_2(X\vee Y)\leq\max\{\TC_2(X),\TC_2(Y),\cat(X)+\cat(Y)\}.
$$
We show that both of these inequalities hold as equalities for the spaces dealt with in the previous section (c.f.~\cite[Proposition~3.10]{CohenPruid08}). More generally:}

\begin{proposition}\label{farberdranishnikov}
\colorao{Let $X$ and $Y$ be subcomplexes of $\hspace{.6mm}\mathbb{S}(k_1 \ldots, k_n)$ and $\hspace{.6mm}\mathbb{S}(k_{n+1}, \ldots, k_{n+m})$ respectively. If $\cat (X) \geq \cat (Y)$ and all the $k_i$ have the same parity, then} 
$$\colorao{\TC_s(X \vee Y)= \max \{\TC_s(X), \TC_s(Y), \cat(X^{s-1})+ \cat (Y)\}.}$$
\end{proposition}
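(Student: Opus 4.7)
My plan is to sandwich $\TC_s(X\vee Y)$ between the claimed maximum from above and below. For the \emph{lower bound}, I would establish the three inequalities $\TC_s(X)\leq\TC_s(X\vee Y)$, $\TC_s(Y)\leq\TC_s(X\vee Y)$, and $\cat(X^{s-1})+\cat(Y)\leq\TC_s(X\vee Y)$ separately. The first two follow at once from the fact that $X$ and $Y$ are retracts of $X\vee Y$ via the obvious projections, together with the monotonicity of $\TC_s$ under retracts. The third, which is the substantive one, I would obtain by exhibiting an explicit nontrivial product of $\cat(X^{s-1})+\cat(Y)$ zero-divisors in $H^*((X\vee Y)^s;\mathbb{Q})$. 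Using Proposition~\ref{H^*(X)} and the splitting $\widetilde{H}^*(X\vee Y)=\widetilde{H}^*(X)\oplus\widetilde{H}^*(Y)$ (with all cross-products through the basepoint vanishing), I would pick maximal simplexes $J_1,\ldots,J_{s-1}\in\mathcal{K}_X$ realizing $\cat(X^{s-1})=\sum_i|J_i|$ inside $H^*(X)^{\otimes(s-1)}$, together with a maximal simplex $J\in\mathcal{K}_Y$ realizing $\cat(Y)=|J|$. Assembling zero-divisors that position $\epsilon_{J_i}$ at tensor slot $i+1$ for $i=1,\ldots,s-1$, together with further zero-divisors involving $\epsilon_J$ supported in the $Y$-summand, a basis-element analysis in the spirit of the proofs of Propositions~\ref{cotaabajoimpar} and~\ref{casiultima} should show that one basis monomial survives cancellation, precisely because the $X$- and $Y$-vertices are disjoint.

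For the \emph{upper bound}, I would construct an explicit motion planner on $(X\vee Y)^s$ by partitioning this space into $X^s$, $Y^s$, and the mixed locus
\[
M:=(X\vee Y)^s\setminus(X^s\cup Y^s).
\]
On $X^s$ and $Y^s$ I would invoke the optimal planners from Theorems~\ref{tcsX odd} and~\ref{TC_s(X) even}, contributing $\TC_s(X)+1$ and $\TC_s(Y)+1$ local domains respectively. On $M$ every motion is planned through the common basepoint $*$: starting at $*$, the $i$-th path travels within whichever wedge summand contains $a_i$. The number of local domains required is $\cat(M)+1$, and I would estimate $\cat(M)\leq\cat(X^{s-1})+\cat(Y)$ by stratifying $M=\bigsqcup_{\emptyset\neq I\subsetneq[s]}M_I$ according to which coordinates fall in $X\setminus\{*\}$ and which in $Y\setminus\{*\}$, and noting that each stratum admits a $\cat$-cover of size at most $|I|\cat(X)+(s-|I|)\cat(Y)+1$. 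Under the hypothesis $\cat(X)\geq\cat(Y)$ this is maximized at $|I|=s-1$, producing the bound $\cat(X^{s-1})+\cat(Y)+1$. The multiplicativity of $\cat$ on the relevant products is available because our spaces are formal with $\cat$ equal to rational cup-length.

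The main obstacle is the careful bookkeeping in the cohomological lower bound. The decomposition of $H^*(X\vee Y)$ into $X$- and $Y$-pieces forces many cross-terms in the expansion of a product of zero-divisors to vanish, and one must verify that the surviving ``pure'' terms do not cancel each other in an unexpected way. The key observation is that any tensor factor in which an $X$-class and a $Y$-class sit in the same slot contributes zero, so after collecting the surviving contributions the argument reduces to the same combinatorial analysis that drove the proofs of Propositions~\ref{cotaabajoimpar} and~\ref{casiultima}; compare \cite[Proposition~3.10]{CohenPruid08}.
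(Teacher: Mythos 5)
Your route is genuinely different from the paper's, and the difference matters. The paper exploits the observation that $X\vee Y$ is itself a subcomplex of the product $\mathbb{S}(k_1,\ldots,k_{n+m})$ with $\mathcal{K}_{X\vee Y}=\mathcal{K}_X\sqcup\mathcal{K}_Y$, so Theorems~\ref{tcsX odd} and~\ref{TC_s(X) even} apply \emph{directly} to $X\vee Y$. The whole proof then collapses to a two-line combinatorial evaluation: in the odd case one computes $\N^s(X\vee Y)$ by splitting according to how many of the simplexes $J_i$ lie in $\mathcal{K}_X$ versus $\mathcal{K}_Y$ (yielding exactly the three terms in the max), and in the even case $\dim(\mathcal{K}_{X\vee Y})=\dim(\mathcal{K}_X)$ gives $\TC_s(X\vee Y)=\TC_s(X)$ immediately. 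By re-deriving everything from scratch you have missed this considerable economy.

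Beyond that, your upper-bound construction has real gaps. First, $X^s$, $Y^s$, and $M$ do not partition $(X\vee Y)^s$: $X^s\cap Y^s=\{(*,\ldots,*)\}$. Second, and more seriously, exhibiting planners with $a+1$, $b+1$, $c+1$ local domains on three pieces does \emph{not} by itself give a planner with $\max\{a,b,c\}+1$ domains on the union; one must interleave the domains and verify that each resulting union $U_j^X\cup U_j^Y\cup U_j^M$ is a topological disjoint union of ENRs, so that the local rules glue. This fails at the basepoint: $(*,\ldots,*)$ lies in $\overline{M}$ as well as in both $X^s$ and $Y^s$, so whichever piece it is assigned to will meet the closure of another piece's domains unless the interleaving is arranged with care. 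You neither flag this nor fix it. Finally, the claim that the section over $M$ costs $\cat(M)+1$ domains, with $\cat(M)$ estimated stratum by stratum via $\cat(M_I)\leq|I|\cat(X)+(s-|I|)\cat(Y)$, quietly assumes that $\cat(X\setminus\{*\})\leq\cat(X)$ (or, more precisely, that you can pull back a categorical cover of the ambient $X$ along the coordinate projection); this is correct but should be said. Your cohomological lower bound, by contrast, is sound and matches what the zero-divisor machinery of Propositions~\ref{cotaabajoimpar} and~\ref{casiultima} would produce once one notes $\bigcap_i J_i=\emptyset$ for simplexes split between the two vertex sets. In short, the idea can probably be pushed through, but as written the upper bound is not a proof, and the slick observation that reduces the whole statement to a computation already done in the paper is absent.
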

\begin{proof}
\colorao{If all the $k_i$ are even, the conclusion follows directly from Theorem~\ref{TC_s(X) even} and Remark~\ref{downtocat} at the end of the paper. In fact $\TC_s(X\vee Y)=\TC_s(X)$ under the present hypothesis.}

\smallskip
\colorao{Assume now that all the $k_i$ are odd, and think of $X \vee Y$ as a subcomplex of  $X \times Y$ inside $\mathbb{S}(k_1, \ldots, k_n, k_{n+1}, \ldots, k_{n+m})$, so that $\mathcal{K}_{X\vee Y}$ \colorito{is the disjoint union of $\mathcal{K}_{X}$ and $\mathcal{K}_{Y}$.} Since $\cat (X)=\dim(\mathcal{K}_X)+1\geq\cat (Y)=\dim( \mathcal{K}_Y)+1$, for maximal simplexes $J_1,\ldots,J_s$ of $\mathcal{K}_{X\vee Y}$ we see  
\begin{equation}\label{word4word}
\colorito{\N}_{X \vee Y} (J_1, \ldots, J_s) \leq \begin{cases}
\TC_s(X), & \text{if} \,\, J_1, \ldots, J_s \subseteq [n] ; \\
\TC_s(Y), & \text{if} \,\, J_1, \ldots, J_s \subseteq \{n+1, \ldots, n+m\} ; \\
(s-1)\cat (X)+ \cat (Y), & \text{otherwise}.
\end{cases}
\end{equation}
Therefore $\TC_s(X \vee Y) \leq \max \{\TC_s(X), \TC_s(Y), \, (s-1)\cat(X)+ \cat (Y)\}$. The reverse inequality holds since each of $\TC_s(X)$, $\TC_s(Y)$, and $(s-1) \cat (X)+ \cat (Y)$ can be achieved as a $\N_{X \vee Y} (J_1, \ldots, J_s)$ for a suitable combination of maximal simplexes $J_i$ of $\mathcal{K}_{X\vee Y}$.}
\end{proof}

\section{\colorito{The unrestricted} case}
\colorito{We now prove Theorem~\ref{TC_s(X)} in} the general case, \colorito{that is for} $X$ a subcomplex of $\hspace{.3mm}\mathbb{S}(k_1, \ldots, k_n)$ where all \colorito{the} $k_i$ are positive integers with no restriction on their parity. \colorito{As usual, we start by establishing the upper bound.}

\subsection{Motion planner}\label{stn4.1}
\colorito{Consider the disjoint union decomposition $[n]= J_E \sqcup J_O $ where $J_E$ is the collection of indices $i\in[n]$ for which $k_i$ is even (thus $i \in J_O$ if and only if $k_i$ is odd)}. For a subset $K \subseteq J_E$ and $P \in \mathcal{P}$, \colorito{let $X^s_{P, K}\subseteq X^s$ and $\N(P, K)$, the norm of $(P, K)$, be defined by}
\begin{itemize}
\item $X^s_{P, K}= X_P^s \cap \left\{(b_1, \ldots, b_s) \in X^s\; \Big|\hspace{-3.6mm} \begin{array}{ll} & \text{for \colorito{each $(i,j)\in K\times[s],\;\;$}} b_{ij}\neq \pm e^0, \,\,\,\, \colorito{\text{while}}\\ & \colorito{\text{for each } i \in J_E- K \text{ there is}\,\, j\in [s] \text{ with } b_{ij}= \pm e^0}\!\!
 \end{array}  \right\}$
\item $\N(P, K)= |P|+ |K|$ where $|P|$ is defined in~\colorito{(\ref{lanormadeP}).} 
\end{itemize}
\colorito{This extends the definitions of $X^s_{P, k}$ and $\N(P, k)$ done when all the $k_i$ are even.}

\smallskip As in the cases where all \colorito{the $k_i$} have the same parity, the \colorito{higher topological complexity of a} subcomplex $X$ of $\mathbb{S}(k_1, \ldots, k_n)$, \colorito{now} with no restrictions on the parity of the sphere \colorito{factors}, is encoded just by \colorito{the} combinatorial information \colorito{on the cells} of $X$. Consider
\begin{equation}\label{sjdhfuxji}
\mathcal{N}^s(X)= \max \left\{\left.\N_X(J_1, \ldots, J_s) + \Big|\bigcap_{i=1}^s J_i \cap J_E \Big|\;\; \right| \,\, \colorito{J_1,\ldots,J_s\in\mathcal{K}_X} \right\}
\end{equation}
where $\N_X(J_1, \ldots, J_s)$ is defined in~(\ref{nx}) for $\mathcal{K}=\mathcal{K}_X$. \colorito{Since both $\N_X(J_1, \ldots, J_s)$ and $|\bigcap_{i=1}^s J_i \cap J_E|$ are monotonically non-decreasing functions of the $J_i$'s, the definition of $\mathcal{N}^s(X)$ can equally be given using only maximal simplexes $J_i\in\mathcal{K}_X$.} \colorito{Further, by~(\ref{sy}),} $\mathcal{N}^s(X)$ can be rewritten as
\begin{equation}\label{gtc_sX}
\mathcal{N}^s(X)= \max \left\{\displaystyle{\sum_{i=1}^s |J_i| - \Big| \bigcap_{i=1}^s J_i \cap J_O \Big|} \,\, | \,\, e_{J_i} \,\, \text{is a cell of } \,\, X, \, \text{for all}\,\, i \in [s] \right\}.
\end{equation}

\begin{theo}\label{TC_s(X), gene}
\colorito{For} a subcomplex $X$ of $\hspace{.4mm}\mathbb{S}(k_1, \ldots, k_n)$, $$\TC_s(X)= \mathcal{N}^s(X).$$
\end{theo}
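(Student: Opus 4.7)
The plan is to prove $\TC_s(X)\le\mathcal{N}^s(X)\le\zcl_s(H^*(X))$, which together with Proposition~\ref{ulbTCn} yields the theorem. Both inequalities are obtained by splicing the parity-specific constructions of Sections~\ref{casoimp} and~\ref{laspares} along the decomposition $[n]=J_O\sqcup J_E$.

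For the upper bound, I would propose the motion planner whose local domains are
$$W_r:=\bigcup_{\N(P,K)=r}X^s_{P,K},\qquad r\in[\mathcal{N}^s(X)]_0,$$
directly generalizing~(\ref{tdu3}). Pairwise disjointness is immediate. That the $W_r$'s cover $X^s$ with $\N(P,K)$ never exceeding $\mathcal{N}^s(X)$ follows from the argument of Lemma~\ref{coverodd}: for $(b_1,\ldots,b_s)\in e_{J_1}\times\cdots\times e_{J_s}$ of type $(P,K)$ one has $|P|\le\N_X(J_1,\ldots,J_s)$ exactly as in Lemma~\ref{coverodd}, and $|K|\le|\bigcap_\ell J_\ell\cap J_E|$ since $i\in K$ forces $b_{ij}\ne\pm e^0$ for every $j\in[s]$, in particular $i\in J_j$ for every $j$. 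The topological disjointness of~(\ref{tdu3}) in the mixed-parity setting is verified exactly as in Lemma~\ref{separadotes}, each $X^s_{P,K}$ remains an ENR by the refined decomposition~(\ref{tdu4})--(\ref{tdu10}), and on each piece I would write down the local rule via the ``pivotal'' algorithm~(\ref{elplanmot}) using the normalized planner $\tau_0,\tau_1$ of Example~\ref{TCs S} in coordinates $i\in J_O$ and the normalized versions of $\lambda_0,\lambda_1,\lambda_2$ of Example~\ref{TCs S even} in coordinates $i\in J_E$; that the resulting section stays in $X$ is checked word-for-word as in Proposition~\ref{noleave}.

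For the lower bound, fix maximal simplexes $J_1,\ldots,J_s$ of $\mathcal{K}_X$ realizing the maximum in~(\ref{sjdhfuxji}) and form
$$\Gamma:=\Bigl(\prod_{\ell=2}^s\gamma(J_1,\ldots,J_\ell)\Bigr)\cdot\prod_{i\in\bigcap_\ell J_\ell\,\cap\,J_E}\overline{\epsilon_i},$$
which combines the odd-case factors of Proposition~\ref{cotaabajoimpar} with one extra factor from Proposition~\ref{casiultima} per even index in the total intersection. By~(\ref{sjdhfuxji}) this is a product of exactly $\mathcal{N}^s(X)$ zero-divisors in $H^*(X^s)$. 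Expanding $\Gamma$ in the tensor-power basis of Proposition~\ref{H^*(X)}, I would single out the ``pivotal'' basis element
$$\pm\epsilon_{J_1-(\bigcap_\ell J_\ell\cap J_O)}\otimes\epsilon_{J_2}\otimes\cdots\otimes\epsilon_{J_s},$$
obtained by taking the summand of $\prod\gamma$ corresponding to $\phi_\ell=J_\ell$ for every $\ell$ (as in~(\ref{eltermino})), and then picking from each $\overline{\epsilon_i}$ the summand that inserts $\epsilon_i$ in the first tensor slot.

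The main obstacle is the bookkeeping showing that, after collecting all summands in the full expansion of $\Gamma$ that agree with this basis element, the total coefficient is nonzero. Many would-be collisions are killed automatically: inserting an extra $\epsilon_i$ into a coordinate $\ell\ge 2$ for $i\in\bigcap_\ell J_\ell$ yields $\epsilon_i^2=0$ in the exterior algebra, so the only way a factor $\overline{\epsilon_i}$ can contribute to the distinguished basis element is through its first-coordinate summand, carrying coefficient $1$. The parity dichotomy of Remark~\ref{elhandlingparodd} is what makes the procedure self-consistent: naively introducing an extra factor $\overline{\epsilon_i}$ for an odd-degree index would give $(\overline{\epsilon_i})^s=0$ by sign cancellation, and this is precisely the reason the correction in~(\ref{sjdhfuxji}) is $|\bigcap_\ell J_\ell\cap J_E|$ rather than $|\bigcap_\ell J_\ell|$. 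Once the non-vanishing of the coefficient is confirmed, one obtains $\mathcal{N}^s(X)\le\zcl_s(H^*(X))$ and the theorem follows.
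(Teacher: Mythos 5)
Your upper bound argument is essentially identical to the paper's: the local domains $X^s_{P,K}$, the norm $\N(P,K)=|P|+|K|$, the counting in Lemma~\ref{coverodd}, the topological disjointness via Lemma~\ref{separadotes}/Remark~\ref{noindex}, and the pivotal local rule mixing $\tau$'s on $J_O$-coordinates with $\rho$'s on $J_E$-coordinates all match Subsection~\ref{stn4.1}. So that half of the proof is fine.

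The lower bound is where you diverge from the paper, and there is a genuine gap. The paper's Proposition~\ref{cotaabajoimparbis} does \emph{not} simply append factors $\overline{\epsilon_i}$ to the odd-case product $\prod_{\ell\geq 2}\gamma(J_1,\ldots,J_\ell)$. Instead it replaces $\gamma(J_1,J_2)$ by $\bar\gamma(J_1,J_2)=\prod_{j\in (J_1-J_2)\cup(J_2-J')}\epsilon_j(2)$ (which deliberately avoids $J'=\bigcap_\ell J_\ell\cap J_E$) and appends the factor $\bar\epsilon_{J'}=\prod_{j\in J'}(\epsilon_j\otimes 1\otimes\cdots-1\otimes\epsilon_j\otimes 1\otimes\cdots)^2$, which concentrates its contribution in slots $1$ and $2$ only. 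This confinement is exactly what makes the ``no other summand matches the distinguished basis element'' argument go through cleanly, because the slot-$\ell$ constraint for $\ell\geq 3$ then forces $\phi_\ell=J_\ell$ uniquely, and the slot-$2$ constraint forces $\phi_2=J_2-J'$ uniquely.

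Your construction keeps the original $\gamma(J_1,J_2)$ and tacks on $\prod_{i\in J'}\overline{\epsilon_i}$, where $\overline{\epsilon_i}$ has summands in \emph{every} slot. Your claim that the only way $\overline{\epsilon_i}$ can contribute to the distinguished basis element is through its slot-$1$ summand, because ``inserting an extra $\epsilon_i$ into coordinate $\ell\geq 2$ yields $\epsilon_i^2=0$,'' is false: the factor $\gamma(J_1,\ldots,J_\ell)$ is a sum over all $\phi_\ell\subseteq(\bigcap_{m<\ell}J_m-J_\ell)\cup J_\ell$, and choosing $\phi_\ell=J_\ell-\{i\}$ lets $\overline{\epsilon_i}$'s slot-$\ell$ summand recombine into $\epsilon_{\phi_\ell}\epsilon_i=\pm\epsilon_{J_\ell}$ without any square appearing. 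In fact, for each partition $(S_1,\ldots,S_s)$ of $J'$ (recording which slot each $\overline{\epsilon_i}$ contributes to) the choice $\phi_\ell=J_\ell-S_\ell$ for $\ell\geq 2$ produces a summand proportional to the distinguished basis element, so there are many collisions, not just the one you isolate. If one carries out the full bookkeeping one finds that, for each $i\in J'$, the relevant factor $(x_1-x_2)\cdots(x_1-x_s)\cdot(x_1+\cdots+x_{s-1}-(s-1)x_s)$ in $\mathbb{Q}[x_1,\ldots,x_s]/(x_1^2,\ldots,x_s^2)$ contributes $2(-1)^{s-1}x_1\cdots x_s$, so your $\Gamma$ is indeed nonzero, but this is a nontrivial cancellation check that your argument does not supply and, as stated, actively misdescribes. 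The paper's $\bar\epsilon_{J'}$/$\bar\gamma(J_1,J_2)$ device is exactly the mechanism that eliminates all those extra summands at the outset, reducing the final step to a term-by-term matching with no cancellation to analyze; I would either adopt that device or replace the ``collisions are killed automatically'' sentence with the explicit coefficient computation just sketched.
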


\colorito{Theorem~\ref{TC_s(X), gene} generalizes Theorems~\ref{tcsX odd} and~\ref{TC_s(X) even}. This is obvious} when all \colorito{the $k_i$} are odd \colorito{for then both $\mathcal{N}^s(X)$ and $\N^s(X)$ agree with} $$\max \left\{\left.\displaystyle{\sum_{i=1}^s |J_i| - \Big| \bigcap_{i=1}^s J_i  \Big|} \,\, \right| \,\, \colorito{J_1,\ldots,J_s\in\mathcal{K}_X} \right\},$$ \colorito{whereas if all the $k_i$} are even, $$\mathcal{N}^s(X)= \max \left\{\displaystyle{\sum_{i=1}^s |J_i| } \,\, \big| \,\, \colorito{J_1,\ldots,J_s\in\mathcal{K}_X} \right\} =s (1+ \dim \mathcal{K}_X).$$

\colorito{The estimate $\mathcal{N}^s(X)\leq\TC_s(X)$ in Theorem~\ref{TC_s(X), gene} will be proved in the next subsection by extending the cohomological methods in Section~\ref{genzero}. Here we prove the estimate $\TC_s(X)\leq\mathcal{N}^s(X)$ by constructing} an optimal motion planner \colorito{with $\mathcal{N}^s(X)+1$ local rules. The corresponding local domains will be obtained} by clustering subsets $X^s_{P, K}$ \colorito{for which the pair} $(P, K)\in \mathcal{P}\times 2^{J_E}$ \colorito{has a fixed} norm. \colorito{In detail,} for $j \in [\mathcal{N}^s(X)]_0$ \colorito{let}
\begin{equation}\label{lagejotatopologicamente}
G_j:= \bigcup_{\N(P, K)=j} X^s_{P, K}.
\end{equation}

\begin{lema}\label{lemma4.2pairwisedisjointcovering}
\colorito{The sets $G_0,\ldots,G_{\mathcal{N}^s(X)}$ yield a pairwise disjoint covering of $X^s$.} 
\end{lema}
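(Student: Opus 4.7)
The plan is to verify the two required properties (pairwise disjointness of the $G_j$'s and their covering $X^s$) by using the fact that each element of $X^s$ canonically determines both its ``type'' $P$ and an auxiliary subset $K\subseteq J_E$, and then controlling the resulting norm by the combinatorics of the cell containing the element.

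First, I would record the uniqueness observation: for any $(b_1,\ldots,b_s)\in X^s$ there is exactly one pair $(P,K)\in\mathcal{P}\times 2^{J_E}$ with $(b_1,\ldots,b_s)\in X^s_{P,K}$. The component $P_i$ of $P$ is simply the partition of $[s]$ into the $\pm$-equivalence classes of the entries $b_{i1},\ldots,b_{is}$ of row $i$, while $K=\{\,i\in J_E\;|\;b_{ij}\neq\pm e^0\text{ for all }j\in[s]\,\}$. This makes the family $\{X^s_{P,K}\}$ pairwise disjoint, and hence so is the family $\{G_j\}$.

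For the covering part, fix $(b_1,\ldots,b_s)\in X^s$ with corresponding pair $(P,K)$, and choose simplexes $J_1,\ldots,J_s\in\mathcal{K}_X$ with $(b_1,\ldots,b_s)\in e_{J_1}\times\cdots\times e_{J_s}$. The argument already used in Lemma~\ref{coverodd} relies only on the fact that $b_{ij}=e^0$ if and only if $i\notin J_j$, a property that is independent of the parity of the $k_i$'s, and therefore yields $|P|\leq\N_X(J_1,\ldots,J_s)$ in the present setting as well. Separately, if $i\in K$ then by definition $b_{ij}\neq\pm e^0$ (hence in particular $b_{ij}\neq e^0$) for every $j\in[s]$, forcing $i\in J_j$ for every $j$; that is, $K\subseteq J_E\cap\bigcap_{j=1}^sJ_j$.

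Combining these two inequalities gives
$$
\N(P,K)=|P|+|K|\;\leq\;\N_X(J_1,\ldots,J_s)+\Big|J_E\cap\bigcap_{j=1}^sJ_j\Big|\;\leq\;\mathcal{N}^s(X),
$$
so $(b_1,\ldots,b_s)\in G_{\N(P,K)}$ with $\N(P,K)\in[\mathcal{N}^s(X)]_0$, completing the covering. There is no real obstacle in this lemma: the disjointness is immediate from the canonical assignment $(b_1,\ldots,b_s)\mapsto(P,K)$, and the covering is a bookkeeping extension of Lemma~\ref{coverodd} in which the new summand $|K|$ is precisely absorbed by the extra term $|\bigcap_i J_i\cap J_E|$ appearing in the definition of $\mathcal{N}^s(X)$.
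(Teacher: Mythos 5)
Your proposal is correct and follows the same two-step structure as the paper: disjointness from the canonical assignment $(b_1,\ldots,b_s)\mapsto(P,K)$, and the covering from combining the Lemma~\ref{coverodd} bound $|P|\leq\N_X(J_1,\ldots,J_s)$ with the containment $K\subseteq J_E\cap\bigcap_j J_j$ to bound $\N(P,K)$ by $\mathcal{N}^s(X)$. The paper records the same two inequalities (its~(\ref{cg1nueva}) and~(\ref{cg2nueva})) and concludes identically.
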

\begin{proof}
\colorito{It is easy to see that $G_{j}\cap G_{j'}=\emptyset$ for $j\neq j'$.} Let $b=(b_1, \ldots, b_s) \in e_{J_1} \times \cdots \times e_{J_s}  \subseteq X^s$, where $J_j \subseteq [n]$ for $j \in [s]$. As in Lemma \ref{coverodd}, we have
\begin{equation}\label{cg1nueva}
\sum_{i=1}^n |\{G\cdot b_{ij}\,\, |\,\, j\in[s]\}|-n \leq  \sum_{j=1}^s \, |J_{j}| - \Big|\bigcap_{j=1}^s J_j \Big|= \N_X(J_1, \ldots, J_s).
\end{equation}
Moreover, it is clear that
\begin{equation}\label{cg2nueva}
\Big| \left\{ \colorito{i \in J_E} \; | \,\, b_{ij} \neq \pm e^0, \, \, \forall j\in [s]\right\} \Big| \leq  \Big| \bigcap_{i=1}^s J_i \cap J_E \Big|.
\end{equation}
Thus, if $P \in \mathcal{P}$ is the type of $b$, and \colorito{$K \subseteq J_E$ is determined by the condition that} $b \in X^s_{P, K}$, \colorito{then $N(P,K)=|P|+|K|\leq\mathcal{N}^s(X)$ in view of~(\ref{Pcardinality}), (\ref{cg1nueva}) and~(\ref{cg2nueva}).}
\end{proof}

\begin{lema}\label{emptyinterg}
\colorito{$(\ref{lagejotatopologicamente})$ is a topological disjoint union. Indeed,}
\begin{equation}\label{disuniong}
X^s_{P,K} \cap \overline{X^s_{P', K'}}= \emptyset=\overline{ X^s_{P,K}} \cap X^s_{P', K'}
\end{equation}
 for $(P, K), (P', K') \in \mathcal{P} \times 2^{J_E}$ provided that $(P, K) \neq (P', K')$ and $\N(P, K)=\N(P', K')$.
\end{lema}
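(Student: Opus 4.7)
The plan is to separate the analysis according to whether $K=K'$ or $K\neq K'$. If $K=K'$, the hypothesis $\N(P,K)=\N(P',K')$ collapses to $|P|=|P'|$ with $P\neq P'$; since the additional constraints involving $K$ in the definition of $X^s_{P,K}$ coincide with those for $X^s_{P',K'}$, both identities in (\ref{disuniong}) follow verbatim from the corresponding statement for $X^s_P$ and $X^s_{P'}$ in Proposition~\ref{interxp}, because the extra $K$-conditions are inherited under closure in a symmetric way.

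For the more delicate case $K\neq K'$, I will exploit two closure observations. First, for each fixed $i\in[n]$ the condition ``$b_{ij}=\pm e^0$ for some $j\in[s]$'' is a closed subset of $X^s$ (being a finite union of closed subspaces); consequently, if $b\in\overline{X^s_{P,K}}$, then for every $i\in J_E\setminus K$ there exists $j\in[s]$ with $b_{ij}=\pm e^0$. Since elements of $X^s_{P',K'}$ satisfy $b_{ij}\neq\pm e^0$ for all $j$ whenever $i\in K'$, the presence of any index in $K'\setminus K$ yields $\overline{X^s_{P,K}}\cap X^s_{P',K'}=\emptyset$; symmetrically, if $K\setminus K'\neq\emptyset$, then $X^s_{P,K}\cap\overline{X^s_{P',K'}}=\emptyset$.

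The second observation is that partition types can only coarsen under limits: if $b$ lies in $\overline{X^s_P}$ and has type $P''$, then (after passing to a subsequence where signs stabilize) each class of $P_i$ collapses into a single class of $P''_i$, so $|P''_i|\leq|P_i|$ for every $i$ and hence $|P''|\leq|P|$. This fills in the remaining subcase in which one of $K$, $K'$ strictly contains the other. For instance, if $K\subsetneq K'$, then the norm equality forces $|P|>|P'|$, which together with the coarsening property contradicts $b\in X^s_{P,K}\cap\overline{X^s_{P',K'}}$; the first observation already excludes the other intersection, and the subcase $K'\subsetneq K$ is symmetric. The fully incomparable subcase $K\setminus K'\neq\emptyset$ and $K'\setminus K\neq\emptyset$ follows immediately from the first observation alone.

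The main obstacle is simply bookkeeping: keeping track, in the $K\neq K'$ case, of which of the two tools (closedness of the $\pm e^0$ condition or partition coarsening) is needed for each of the two identities in (\ref{disuniong}). Once the case analysis is organized as above, each subcase reduces to a one-line verification, and the reduction to Proposition~\ref{interxp} when $K=K'$ avoids redoing the combinatorial analysis of ordered partitions carried out there.
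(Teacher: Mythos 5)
Your proof is correct and rests on the same two tools the paper uses: the reduction to Proposition~\ref{interxp} when $K=K'$, the closedness of the ``$b_{ij}=\pm e^0$ for some $j$'' condition (this is exactly what the paper isolates as Remark~\ref{noindex}), and the partition-coarsening count $|P''_i|\leq|P_i|$ on closures. The only real difference is the case organization: the paper splits into $\{K=K'\}$, $\{P=P'\}$, $\{P\neq P'\text{ and }K\neq K'\}$, whereas you split $\{K\neq K'\}$ by whether $K$ and $K'$ are comparable under inclusion. Your breakdown is slightly cleaner; in particular, the paper's third case opens with ``without loss of generality $|P|>|P'|$,'' which silently skips the subcase $P\neq P'$, $K\neq K'$, $|P|=|P'|$ (where both $K\setminus K'$ and $K'\setminus K$ are nonempty, so Remark~\ref{noindex} gives both vanishings directly). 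Your ``incomparable'' subcase handles this explicitly, so the partition of cases is exhaustive at a glance. One small stylistic note: the ``pass to a subsequence where signs stabilize'' phrasing for the coarsening step is unnecessary -- since ``$b_{ik}=\pm b_{il}$'' is already a closed condition, each part of $P'_i$ lands inside a single part of the limit type $P''_i$ without any sign bookkeeping.
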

\colorito{The} following \colorito{observation} will be useful \colorito{in the proof of Lemma~\ref{emptyinterg}:}

\begin{remark}\label{noindex}{\em
Let $K, K' \subseteq 2^{J_E}$ and \colorito{$P, P' \in \mathcal{P}$. If} there exists an index $i \in \colorito{K-K',}$ \colorito{then}
\begin{itemize}
\item $b_{ij} \neq  \pm e^0$ for all $j \in [s]$ \colorito{provided} $b=(b_1, \ldots, b_s)\in X^s_{P, K}$.
\item $b_{ij_0}= \pm e^0$ for some $j_0 \in [s]$ \colorito{provided} $b=(b_1, \ldots, b_s) \in X^s_{P', K'}$.
\end{itemize}
Therefore, $X^s_{P,K} \cap \overline{X^s_{P', K'}}= \emptyset$.
}\end{remark}

\begin{proof}[Proof of Lemma~$\ref{emptyinterg}$] \colorito{There are three possibilities:}

\medskip
\noindent \textbf{Case} $K=K'$. In this case, one conclude that $P \neq P'$ with $|P|=|P'|$, since $(P, K) \neq (P', K')$ and $\N(P, K)=\N(P', K')$. The desired equalities follow from Proposition \ref{interxp}.

\medskip
\noindent\textbf{Case} $P=P'$. In this case we have $K \neq K'$ with $|K|=|K'|$. Then, there \colorito{exist} indexes $i, i'\colorito{\in[n]}$ such that $i \in \colorito{K - K}'$ and $i' \in \colorito{K'-K.}$ Therefore, equalities~(\ref{disuniong}) follow from Remark \ref{noindex}.

\medskip
\noindent\textbf{Case} $P\neq P'$ and $K\neq K'$. \colorito{Without loss of generality we can assume} $|P|> |P'|$. Then there exists $i \in [n]$ such that $|P_i|> |P_i'|$, thus $X^s_{P,K} \cap \overline{X^s_{P', K'}}= \emptyset$. Moreover, \colorito{since} $|K| < |K'|$ \colorito{is forced,} there exits $i \in \colorito{K' - K,}$ \colorito{so that} $\overline{X^s_{P,K}} \cap X^s_{P', K'}= \emptyset$ by \colorito{Remark~\ref{noindex}.}
\end{proof}

\colorito{Lemmas~\ref{lemma4.2pairwisedisjointcovering} and~\ref{emptyinterg} reduce the proof of Theorem~\ref{TC_s(X), gene} to checking that each $X^s_{P,K}$ is an ENR admitting a local rule. Thus, troughout the remaining of this subsection we fix a pair $(P, K) \in \mathcal{P} \times 2^{J_E}$ with $P=(P_1, \ldots, P_n)$ and where each $P_i=\{\alpha_1^i, \ldots, \alpha^i_{n(P_i)}\}$ is assumed to be ordered as indicated at the beginning of Section~\ref{secciondosmeramera}.}

\medskip \colorito{Our analysis of $X^s_{P,K}$ depends on establishing a topological decomposition of $X^s_{P,K}$. To start with, note the topological} disjoint union decomposition $$X^s_{P,K}=\,\bigsqcup_{\beta} \,X^s_{P, K} \cap X^{\colorito{s}}_{P, \beta}$$ where the union runs over all $\beta=(\beta^1, \ldots, \beta^n)$ as in~(\ref{X^s_P}). \colorito{But we need a further splitting of each term $X^s_{P, K} \cap X^s_{P, \beta}$.}

\medskip Let $I=\{\ell_1, \ldots, \ell_{|I|}\}$ \colorito{stand for} $J_E- K$ and, for each $i \in [n]$, consider the subsets \colorito{$T_0(\alpha_1^i)$ and $T_1(\alpha_1^i)$ defined in~(\ref{T_01}).} \colorito{For each} $\epsilon=(t_1, \ldots, t_{|I|}) \in \{0, 1\}^{|I|}$ \colorito{define} $$\colorito{T_{\epsilon}}=\bigcap_{i = 1}^{|I|} T_{t_i}( \alpha_1^{\ell_i}).$$ \colorito{We then get a topological disjoint union decomposition} $$X^s_{P, K} \cap X^s_{P, \beta}\,= \,\bigsqcup_{\colorito{\epsilon \in \{0, 1\}^{|I|}}} \, X^s_{P, K} \cap X^s_{P, \beta} \cap\colorito{T_{\epsilon}.}$$ Therefore, \colorito{the updated task is the proof of:} 

\begin{lema}\label{updatedtarea}
\colorito{Each $ X^s_{P, K, \beta, \epsilon} := X^s_{P, K} \cap X^s_{P, \beta} \cap \colorito{T_{\epsilon}}$ is an ENR admitting a local rule.}
\end{lema}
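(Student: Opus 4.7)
The plan is to combine the arguments of Propositions~\ref{noleave} and~\ref{construfinal}, treating each row $i\in[n]$ of an $(n\times s)$-matrix according to the parity of $k_i$, and showing that the fourfold refinement by $(P,K,\beta,\epsilon)$ is precisely the combinatorial resolution needed to make the pivotal local rule unambiguous.

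First, I would verify the ENR property by exhibiting $X^s_{P,K,\beta,\epsilon}$ as homeomorphic to a Cartesian product of a finite discrete space with a product of (possibly punctured) spheres. As in the proof of Proposition~\ref{construfinal}, the data $(P,\beta)$ records which columns within a given row are repetitions and which are antipodal, reducing the free information to one representative per $G$-orbit; the condition imposed by $K$ punctures the factors $S^{k_i}$ with $i\in K$ at $\pm e^0$; and the $\epsilon$-data for $i\in I=J_E-K$ either forces a coordinate in $\alpha_1^i$ to equal $\pm e^0$ (when $t_i=1$, producing a finite discrete choice) or punctures further factors (when $t_i=0$).

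For the local rule I would follow the pivotal strategy of Subsection~\ref{suboddmotion}: define a (non-continuous) global candidate $\varphi\colon X^s\to\mathbb{S}(k_1,\ldots,k_n)^{J_s}$ by formula~(\ref{elplanmot}), choosing in each entry $(i,j)$ the elementary motion from $\{\tau_0,\tau_1\}$ when $i\in J_O$ and from $\{\rho_0,\rho_1,\rho_2\}$ when $i\in J_E$, where $\rho_0,\rho_1,\rho_2$ are the normalized versions of the $\lambda_i$ from Example~\ref{TCs S even} obtained exactly as the $\tau_i$ were derived from the $\phi_i$. On the stratum $X^s_{P,K,\beta,\epsilon}$ the combinatorial data prescribe a single choice: for $i\in J_O$ one uses $\tau_0$ precisely when $j\in\alpha_1^i-(\beta^i\cup\{1\})$ and $\tau_1$ otherwise; for $i\in K$ one applies the analogous dichotomy between $\rho_1$ and $\rho_2$; and for $i\in I$ the bit $t_i$ of $\epsilon$ determines whether $\rho_0$ or $\rho_1$ is used on the part $\alpha_1^i-(\beta^i\cup\{1\})$, while $\rho_2$ is still used on the remaining columns. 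With these rigid prescriptions, $\varphi|_{X^s_{P,K,\beta,\epsilon}}$ is continuous and is a section of $e_s^{\mathbb{S}(k_1,\ldots,k_n)}$.

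It remains to certify that $\varphi|_{X^s_{P,K,\beta,\epsilon}}$ actually lands in $X^{J_s}$, which follows verbatim from the two-phase argument of Proposition~\ref{noleave}: for $(b_1,\ldots,b_s)\in e_{J_1}\times\cdots\times e_{J_s}\subseteq X^s$ and $t\in[0,1/2]$, the rows indexed by $[n]-J_1$ stay at $e^0$ thanks to the initial delay $t_{b_{i1}}$, so the image lies in $\overline{e_{J_1}}\subseteq X$; for $t\in[1/2,1]$, the rows indexed by $[n]-J_j$ have already reached $e^0$ and remain there, so the image lies in $\overline{e_{J_j}}\subseteq X$. The main technical hurdle is the administrative bookkeeping needed to verify that on each stratum the local rule becomes unambiguous across all three cases (odd rows, punctured even rows, and even rows forced to pass through $\pm e^0$); this is exactly what the refinement by $(P,K,\beta,\epsilon)$ achieves, so once organized, the proof reduces to the previously established arguments in the purely odd and purely even cases.
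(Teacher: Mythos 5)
Your proposal is correct and follows essentially the same route as the paper: invoke the ENR argument from Proposition~\ref{construfinal}, build the non-continuous pivotal global section $\varphi$ from the normalized elementary motions $\tau_0,\tau_1$ (odd rows) and $\rho_0,\rho_1,\rho_2$ (even rows), observe that the $(P,K,\beta,\epsilon)$-stratification makes the choice in each entry deterministic hence $\varphi|_{X^s_{P,K,\beta,\epsilon}}$ continuous, and then reuse the two-phase ``does not leave $X$'' argument of Proposition~\ref{noleave}. A minor point in your favour: you explicitly split the even rows into $i\in K$ (where only $\rho_1,\rho_2$ can occur, no $\epsilon$-bit needed) and $i\in I=J_E-K$ (where $t_i$ selects $\rho_0$ versus $\rho_1$ on $\alpha^i_1-(\beta^i\cup\{1\})$), which is slightly cleaner than the paper's compressed case formula that writes ``$t_i$'' for all $i\in J_E$ even though $\epsilon$ is only indexed by $I$.
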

\begin{proof} 
\colorito{The ENR assertion follows just as in the first paragraph of the proof of Proposition~\ref{construfinal}. The construction of the local rule is also similar to the those at the end of Subsections~\ref{suboddmotion} and~\ref{laspares}, and we provide the generalized details for completeness.}

\smallskip For $i=0,1$ and $j=0, 1, 2$, let $\tau_i$ and $\rho_j$ be the local rules, \colorito{with corresponding local domains $A_i$ and $B_j$,} obtained \colorito{in Subsections~\ref{suboddmotion} and~\ref{laspares}} by normalizing the local rules $\phi_i$ and $\lambda_j$ given in Examples \ref{TCs S} and \ref{TCs S even} \colorito{---see the proof of Proposition~\ref{construfinal} and the considerations following the proof of Proposition~\ref{interxp}.}

\medskip
As \colorito{before,} it is useful to keep in mind that elements $(\rojo{b}_1, \ldots, \rojo{b}_s) \in X^s$, with $\rojo{b}_j=(\rojo{b}_{1j}, \ldots, \rojo{b}_{nj})$ for $j\in \rojo{[s]}$, \rojo{can be thought of as matrices $(b_{i,j})$ whose columns represent the various stages in $X$ through which motion is to be planned (necessarily along rows). Again, we follow a pivotal strategy. In detail, in terms of the notation set at the beginning of the introduction for elements in the function space $X^{J_s}$,} consider the map
\begin{equation}\label{elplanmotbis}
\rojo{\green{\varphi}\colon X^s} \to \mathbb{S}\rojo{(k_1,\ldots, k_n)}^{J_s}
\end{equation}
\rojo{given by} $\green{\varphi}\left((\rojo{b}_1, \ldots, \rojo{b}_s)\right)=\left(\green{\varphi_1}(\rojo{b}_1, \rojo{b}_1), \ldots , \green{\varphi_s}(\rojo{b}_1, \rojo{b}_s)\right)$ where, \rojo{for $j\in[s]$,} $$\green{\varphi_j}(\rojo{b}_1, \rojo{b}_j)= \left(\green{\varphi_{1j}} (\rojo{b}_{11}, \rojo{b}_{1j}), \ldots, \green{\varphi_{nj}}(\rojo{b}_{n1}, \rojo{b}_{nj})\right)$$ \rojo{is the path in $\mathbb{S}(k_1,\ldots,k_n)$, from $b_1$ to $b_j$, whose $i$-th coordinate} $\green{\varphi_{ij}}(\rojo{b}_{i1}, \rojo{b}_{ij})$, \rojo{\rojo{$i\in[n]$,} is the path in $S^{k_i}$, from $b_{i1}$ to $b_{ij}$,} defined by $$\green{\varphi_{i,j}} (\rojo{b}_{i1},\rojo{b}_{ij})(t)=\begin{cases}   \rojo{b}_{i1}, & 0 \leq t \leq t_{ \rojo{b}_{i1}}, \\ \rojo{\sigma}(\rojo{b}_{i1}, \rojo{b}_{ij})(t-t_{\rojo{b}_{i1}}), & t_{\rojo{b}_{i1}} \leq t \leq 1.  \end{cases}$$ Here $t_{\rojo{b}_{i1}}=\frac{1}{2} -d(\rojo{b}_{i1}, \rojo{e^0})$ and 
\begin{equation}\label{lasigmabis}
\rojo{\sigma(b_{i1}, b_{ij})}=\begin{cases}\tau_0\rojo{(b_{i1}, b_{ij})}, & \colorito{\text{if } i\in J_O \text{ and }} (\rojo{b}_{i1}, \rojo{b}_{ij}) \in A_0; \\ \tau_1\rojo{(b_{i1}, b_{ij})}, & \colorito{\text{if } i\in J_O \text{ and }} (\rojo{b}_{i1}, \rojo{b}_{ij}) \in A_1;\\
\rho_0(b_{i1}, b_{ij}), & \colorito{\text{if } i\in J_E \text{ and }} (b_{i1}, b_{ij}) \in B_0;\\
\rho_1(b_{i1}, b_{ij}), & \colorito{\text{if } i\in J_E \text{ and }}(b_{i1}, b_{ij}) \in B_1;\\ 
\rho_2(b_{i1}, b_{ij}), & \colorito{\text{if } i\in J_E \text{ and }}(b_{i1}, b_{ij}) \in B_2. \end{cases}
\end{equation}

\rojo{Although \green{$\varphi$} is not continuous, its restriction $\green{\varphi}_{P, K, \beta,  \epsilon}$ to $X^s_{P, K, \beta, \epsilon}$ is, for then~(\ref{lasigmabis}) takes the form}
$$
\sigma=\begin{cases}\tau_1, & \mbox{$\colorito{i\in J_O,}\,\,\, j \notin \alpha^i_{1}$ or $j \in \rojo{\beta^i\cup\{1\}}$}; \\ \tau_0, & \mbox{$\colorito{i\in J_O,}\,\,\, j \in \alpha^i_{1}$ and $j \notin \rojo{\beta^i\cup\{1\}}$};\\ 
\rho_2, & i\in J_E,\,\,\roji{j \notin \alpha^i_{1}\,\, \text{or} \,\, j \in \rojo{\beta^i\cup\{1\}}};\\
\rho_1, & i\in J_E,\,\, j\in \alpha_1^i - \left(\beta^i\cup\{1\}\right)\text{ and } t_i=0;\\
\rho_0, & i\in J_E,\,\, j \in \alpha_1^i - \left(\beta^i\cup\{1\}\right)\text{ and } t_i=1. \end{cases}
$$
\colorito{Moreover,} $\green{\varphi}_{P, K, \beta,  \epsilon}$ is clearly a section for $e_s^{\rojo{\mathbb{S}(k_1,\ldots,k_n)}}$, \colorito{while the} fact that $\varphi_{P, K, \beta,  \epsilon}$ actually takes values in $X^{J_s}$ is \colorito{verified with an argument identical to the one proving} Proposition~\ref{noleave}.
\end{proof}

\subsection{Zero-divisors cup-length}\label{genzero}
We \colorito{next} show that, for a subcomplex $X$ of $\mathbb{S}(k_1,\ldots,k_n)$ (with no restrictions on the parity of the $k_i$, $i \in [n]$), the cohomological lower bound for $\TC_s(X)$ in Proposition~\ref{ulbTCn} is optimal and agrees with the upper bound coming from our explicit motion \colorito{planner} in the previous \colorito{subsection. Here} we use same considerations and notation as in Section \ref{sectres}.

\begin{proposition}\label{cotaabajoimparbis}
\colorito{A subcomplex $X$ of $\,\mathbb{S}(k_1,\ldots,k_n)$ has} $$\mathcal{N}^{\rojo{s}}(X)\leq\zcl_s(H^*(X)).$$
\end{proposition}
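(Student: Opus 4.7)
The plan is to extend the construction in the proof of Proposition~\ref{cotaabajoimpar} by appending, to its product of zero-divisors, one extra zero-divisor for every index $j\in J_0\cap J_E$, where $J_0=\bigcap_{i=1}^s J_i$ for some choice of maximal simplexes $J_1,\ldots,J_s\in\mathcal{K}_X$ realizing the maximum in~(\ref{sjdhfuxji}). Setting $A:=J_0\cap J_E$, Proposition~\ref{difdef} gives $\mathcal{N}^s(X)=\N_X(J_1,\ldots,J_s)+|A|$, so such an augmented product will have the right length.

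Concretely, keeping the zero-divisors $\epsilon_j(\ell)\in H_X$ and the elements $\gamma(J_1,\ldots,J_\ell)$ from the proof of Proposition~\ref{cotaabajoimpar}, the plan is to consider
\[
\Gamma \,:=\, \bigg(\prod_{\ell=2}^s \gamma(J_1,\ldots,J_\ell)\bigg)\cdot\prod_{j\in A}\epsilon_j(2),
\]
a product of exactly $\mathcal{N}^s(X)$ zero-divisors, and to verify $\Gamma\neq 0$ in $H_X$.

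The crucial new ingredient is the following parity-sensitive identity: whereas graded commutativity gives $\epsilon_j(\ell)^2=0$ when $k_j$ is odd, for $k_j$ even one has
\[
\epsilon_j(2)^2 \,=\, -2\,\epsilon_j\otimes\epsilon_j\otimes 1\otimes\cdots\otimes 1,
\]
which is nonzero despite the fact that $\epsilon_j^2=0$ in $H^*(X)$ for every $j$. The idea is to combine each extra factor $\epsilon_j(2)$ ($j\in A$) with the copy of $\epsilon_j(2)$ already present in $\gamma(J_1,J_2)$, collect the resulting contribution $\prod_{j\in A}\epsilon_j(2)^2=\pm 2^{|A|}\epsilon_A\otimes\epsilon_A\otimes 1\otimes\cdots\otimes 1$, and then follow the leading-term analysis of Proposition~\ref{cotaabajoimpar} by choosing $\phi_\ell=J_\ell$ in every remaining $\gamma(J_1,\ldots,J_\ell)$. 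A bookkeeping exercise then shows that $\Gamma$ contains the basis element
\[
\pm 2^{|A|}\,\epsilon_{J_1-(J_0\cap J_O)}\otimes\epsilon_{J_2}\otimes\cdots\otimes\epsilon_{J_s}
\]
as a summand with nonzero coefficient, and the first tensor factor $\epsilon_{J_1-(J_0\cap J_O)}$ is a nonzero class in $H^*(X)$ since $J_1-(J_0\cap J_O)\subseteq J_1\in\mathcal{K}_X$.

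The main obstacle is the uniqueness part: to conclude $\Gamma\neq 0$, one must verify that no other combination of expansion choices (subsets $\phi_\ell$ inside each $\gamma(J_1,\ldots,J_\ell)$, together with the sign choices for the extra factors $\epsilon_j(2)$, $j\in A$) contributes the same basis element, once the relations $\epsilon_j^2=0$ are enforced coordinate-wise. This refines the uniqueness analysis at the end of Proposition~\ref{cotaabajoimpar}; the new subtlety is that, for each $j\in A$, the two copies of $\epsilon_j(2)$ (one inside $\gamma(J_1,J_2)$ and one in the extra product) must be expanded with opposite sign choices, since otherwise $\epsilon_j^2=0$ kills the contribution, after which uniqueness reduces to the standard argument for the remaining $\gamma$-factors.
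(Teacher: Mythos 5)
Your proposal is correct and takes essentially the same approach as the paper: the paper's auxiliary elements $\bar{\epsilon}_{J'}$ and $\bar{\gamma}(J_1,J_2)$ are obtained precisely by collecting the paired factors $\epsilon_j(2)^2 = -2\,\epsilon_j\otimes\epsilon_j\otimes 1\otimes\cdots\otimes 1$ (for $j\in J' = A$) out of your product $\gamma(J_1,J_2)\cdot\prod_{j\in A}\epsilon_j(2)$, which makes the sign-cancellation bookkeeping you flag automatic. After that rewriting, the leading-term/uniqueness argument matches yours (and the paper's) verbatim, with first tensor factor $\epsilon_{J_1-(J_0\cap J_O)}$.
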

\begin{proof}
\colorito{We use the tensor product ring $H_X$, and the elements $u(\ell)\in H_X$ for} $u \in H^*(X)$, \colorito{as well as the elements $\gamma(J_1,\ldots,J_\ell)\in H_X$ for $J_1,\ldots,J_\ell\in\mathcal{K}_X$ defined for $2 \leq \rojo{\ell} \leq s$ at the beginning of the proof of Proposition~\ref{cotaabajoimpar} (but this time we will only need the latter elements in the range $3\leq\ell\leq s$). In addition,} let $J'= \bigcap_{j=1}^s J_j \cap J_E$ and consider 
\begin{eqnarray}
\bar{\epsilon}_{J'}&=& \prod_{j \in J'} (\epsilon_j \otimes 1 \otimes \cdots \otimes 1-1 \otimes \epsilon_j \otimes 1 \otimes \cdots \otimes 1)^2\label{elemento34decorrecion}\\ 
&=& (-2)^{|J'|} \epsilon_{J'} \otimes \epsilon_{J'} \otimes 1 \otimes \cdots \otimes 1 \nonumber
\end{eqnarray}
and
\begin{eqnarray}
\bar{\gamma}(J_1, J_2)&=& \displaystyle{\prod_{j \in (J_1-J_2) \cup (J_2-J')} \epsilon_j (2)} 
\label{elemento35decorreccion} \\ \nonumber
&=& \displaystyle{ \sum_{\phi_{2} \subseteq  (J_1-J_2 )\cup (J_2-J')} \pm \epsilon_{\phi^c_{2}} \otimes \epsilon_{\phi_{2}} \otimes 1 \otimes \cdots \otimes 1}
\end{eqnarray}
\colorito{where, as in the proof of Proposition~\ref{cotaabajoimpar}, $\phi^c_2$ stands for the complement of $\phi_2$ in $ (J_1-J_2 )\cup (J_2-J')$. Then}
\begin{equation}\label{prodzclsbis}
\bar{\epsilon}_{J'}\cdot \bar{\gamma} (J_1, J_2) \cdot\prod_{\ell=3}^{s} \gamma (J_1, \ldots, J_{\ell})= \sum_{\phi_ 2, \ldots, \phi_s} \pm  2^{|J'|} \epsilon_{J'} \epsilon_{\phi_2^c} \cdots\epsilon_{\phi^c_s} \otimes \epsilon_{J'} \epsilon_{\phi_2} \otimes \colorito{\epsilon_{\phi_3}}\otimes\cdots \otimes \epsilon_{\phi_s}
\end{equation}
\colorito{where, for $3\leq\ell\leq s$,
$$
\phi_\ell \rojo{{}\subseteq{}} \Big(\bigcap_{m=1}^{\ell-1} J_m - J_{\ell} \Big) \cup J_{\ell}
$$
with $\phi^c_{\ell}$ standing for the complement of $\phi_{\ell}$ in $\Big(\bigcap_{m=1}^{\ell-1} J_m - J_{\ell} \Big) \cup J_{\ell}$ ---here we are using the notation in Proposition~\ref{cotaabajoimpar}. Recalling that} $$\N_X(J_1, \ldots, J_s)=\sum_{\ell=2}^s \left(\Big| \bigcap_{m=1}^{\ell-1} J_m- J_{\ell} \Big|+ \Big| J_{\ell} \Big| \right),$$ \colorito{we easily see that the left-hand side of~(\ref{prodzclsbis})} is a product of $\N_X(J_1, \ldots, J_s)+ |\bigcap_{j=1}^sJ_j \cap J_E|$ zero-divisors. \colorito{Thus, by~(\ref{sjdhfuxji}),} it suffices to prove the non-triviality of the \colorito{right-hand side of~(\ref{prodzclsbis}).} \rojo{With this in mind, note that} the term
\begin{equation}\label{elterminobis}
\pm 2^{|J'|} \, \epsilon_{J'} \, \epsilon_{J_1- J_2} \, \colorito{\epsilon_{(J_1 \cap J_2)-J_3}} \cdots \epsilon_{(J_1 \cap \cdots \cap J_{s-1})-J_s} \otimes \epsilon_{J_2} \otimes \cdots \otimes\epsilon_{J_s},
\end{equation}
\rojo{which} appears in~(\ref{prodzclsbis}) \rojo{with} \,$\phi_\ell=J_\ell$ for $3 \leq  \ell \leq s$ and $\phi_2 =J_2-J'$, is \rojo{a basis element} because $$ \epsilon_{J'} \cdot\epsilon_{J_1- J_2}  \cdots \epsilon_{(J_1 \cap \cdots \cap J_{\ell-1})-J_\ell} \cdots \epsilon_{(J_1 \cap \cdots \cap J_{s-1})-J_s}= \epsilon_{J'}\cdot \epsilon_{(J_1- \cap_{j=1}^s J_j)}= \epsilon_{J_0}$$ with $J_0 \az{{}\subseteq{}} J_1$. \rojo{The non-triviality of~(\ref{prodzclsbis}) then follows by observing that~(\ref{elterminobis})} cannot arise when other summands \rojo{in~(\ref{prodzclsbis})} are expressed in terms of \rojo{the} basis for $H_X$. \rojo{In fact, each summand
\begin{equation}\label{otrossumandosbis}
\pm 2^{|J'|} \epsilon_{J'} \epsilon_{\phi_2^c}  \cdots \epsilon_{\phi^c_s} \otimes \epsilon_{J'}  \epsilon_{\phi_2} \otimes \colorito{\epsilon_{\phi_3}} \otimes\cdots \otimes \epsilon_{\phi_s}
\end{equation}
in~(\ref{prodzclsbis}) is either zero or a basis element and, in the latter case,~(\ref{otrossumandosbis}) agrees (up to sign) with~(\ref{elterminobis}) only if $\phi_{\ell}=J_{\ell}$ for $\ell=3,\ldots,s$, and $\phi_2= J_2-J'$.}
\end{proof}

\begin{remark}{\em
\colorito{The factors~(\ref{elemento34decorrecion}) and~(\ref{elemento35decorreccion}) adjust the product~(\ref{prodzcls}) of zero divisors in the proof of Proposition~\ref{cotaabajoimpar} so to account for the differences noted in Remark~\ref{elhandlingparodd}.}
}\end{remark}

\colorito{We close the section by noticing that Proposition~\ref{farberdranishnikov} holds without restriction on the parity of the sphere dimensions $k_1,\ldots,k_{n+m}$. \roji{That is:
\begin{proposition} Let $X$ and $Y$ be subcomplexes of $\hspace{.6mm}\mathbb{S}(k_1 \ldots, k_n)$ and $\hspace{.6mm}\mathbb{S}(k_{n+1}, \ldots, k_{n+m})$ respectively. If $\cat (X) \geq \cat (Y)$, then
$$\colorao{\TC_s(X \vee Y)= \max \{\TC_s(X), \TC_s(Y), \cat(X^{s-1})+ \cat (Y)\}.}$$ \end{proposition}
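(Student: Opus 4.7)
The plan is to imitate the proof of Proposition~\ref{farberdranishnikov} essentially verbatim, replacing the invariant $\N^s$ by its generalization $\mathcal{N}^s$ and invoking Theorem~\ref{TC_s(X), gene} in place of Theorem~\ref{tcsX odd}. Viewing $X\vee Y$ as a subcomplex of $\mathbb{S}(k_1,\ldots,k_{n+m})$ wedged at the basepoint, one has $\mathcal{K}_{X\vee Y}=\mathcal{K}_X\sqcup\mathcal{K}_Y$ as simplicial complexes with vertex sets $[n]$ and $\{n+1,\ldots,n+m\}$ respectively, so every maximal simplex of $\mathcal{K}_{X\vee Y}$ lies entirely in $[n]$ or entirely in $\{n+1,\ldots,n+m\}$.

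Concretely, I would first establish the upper bound $\TC_s(X\vee Y)\leq\max\{\TC_s(X),\TC_s(Y),(s-1)\cat(X)+\cat(Y)\}$ by computing $\mathcal{N}^s(X\vee Y)$ through a three-case analysis of an arbitrary tuple of maximal simplexes $J_1,\ldots,J_s$ of $\mathcal{K}_{X\vee Y}$, in direct analogy with~(\ref{word4word}). When all $J_i\subseteq[n]$, the quantity $\mathcal{N}_{X\vee Y}(J_1,\ldots,J_s)$ coincides with $\mathcal{N}_X(J_1,\ldots,J_s)\leq\TC_s(X)$ because the even-coordinate set restricts correctly; symmetrically, when all $J_i\subseteq\{n+1,\ldots,n+m\}$, the bound is $\TC_s(Y)$. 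In the mixed case the intersection $\bigcap_{i=1}^s J_i$ is empty (it meets disjoint vertex sets), so the ``$J_E$-correction'' in~(\ref{sjdhfuxji}) vanishes and $\mathcal{N}_{X\vee Y}(J_1,\ldots,J_s)=\N_{X\vee Y}(J_1,\ldots,J_s)=\sum_{i=1}^{s}|J_i|$, which is bounded by $(s-1)\cat(X)+\cat(Y)$ because $\cat(X)\geq\cat(Y)$ and at least one $J_i$ is confined to the smaller side $\mathcal{K}_Y$. The reverse inequality follows by exhibiting explicit tuples $(J_1,\ldots,J_s)$ that realize each of the three quantities.

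To convert $(s-1)\cat(X)+\cat(Y)$ into $\cat(X^{s-1})+\cat(Y)$, one finally invokes the product formula $\cat(X^{s-1})=(s-1)\cat(X)$, which holds for the subcomplexes under consideration by Remark~\ref{downtocat} at the end of the paper.

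The main obstacle is the bookkeeping surrounding the new correction term: one must check that the factor $|\bigcap J_i\cap J_E|$ in the definition of $\mathcal{N}^s$ does not spoil any of the three cases---reducing to the corresponding corrections for $X$ and $Y$ in the two pure cases, and vanishing in the mixed case because the intersection of simplexes drawn from disjoint vertex sets is empty. Once this is confirmed, the argument of Proposition~\ref{farberdranishnikov} transfers without further modification.
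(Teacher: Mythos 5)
Your proposal follows exactly the route taken in the paper: the paper itself remarks that the second paragraph of the proof of Proposition~\ref{farberdranishnikov} carries over verbatim once $\N_{X\vee Y}(J_1,\ldots,J_s)$ is replaced by $\sum_{i=1}^s|J_i|-|\bigcap_{i=1}^sJ_i\cap J_O|$ (which is precisely your $\mathcal{N}_{X\vee Y}$, and your key observation that the correction term vanishes in the mixed case because the intersection of simplexes from disjoint vertex sets is empty is the same point implicit in the paper). Your bookkeeping of the $J_E$/$J_O$ correction in the pure cases and your use of Remark~\ref{downtocat} to pass from $(s-1)\cat(X)$ to $\cat(X^{s-1})$ are both as the paper intends.
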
}
The argument given in the second paragraph of the proof of Proposition~\ref{farberdranishnikov} applies word for word in the unrestricted case (replacing, of course, $\N_{X \vee Y} (J_1, \ldots, J_s)$ by $\sum_{i=1}^s|J_i|-|\bigcap_{i=1}^sJ_i\cap J_O|$ in~(\ref{word4word}) and in the last line of that proof).}

\section{Other polyhedral product spaces}\label{othersec}
\rojo{Polyhedral product spaces have recently been the focus of intensive research in connection to toric topology and its applications to other fields. In this section we determine the higher topological complexity of polyhedral product spaces $Z(\{(X_i,\star)\},\mathcal{K})$ for which each factor space $X_i$ admits a {\it $\TC_s$-efficient homotopy cell decomposition,} concept that is defined next.}

\medskip
\rojo{Recall that the spherical cone length of a path connected space $Y$, denoted here by $\cone(Y)$, is the least nonnegative integer $c$ for which there is a \emph{length-$c$ homotopy cell decomposition} $(Y_0,\ldots,Y_c)$ of $Y$, that is, a nested sequence of spaces $Y_0\subseteq\cdots\subseteq Y_c$ so that $Y_0$ is a point (the base point of all the $Y_i$'s), $Y_c$ has the (based) homotopy type of $Y$ and, for $0\leq i<c$, $Y_{i+1}$ is the (reduced) cone of a (based) map $\pi_i\colon W_i\to Y_i$ whose domain $W_i$ is a finite wedge of spheres (of possibly different dimensions). In such a situation, we refer to $Y_i$, to $Y_i-Y_{i-1}$, and to $\pi_i$, respectively, as the $i$-th layer, the $i$-th stratum, and the $i$-th attaching map of the homotopy cell decomposition. If no such integer $c$ exists, we set $\cone(Y)=\infty$. In these terms we say that $Y$ admits a $\TC_s$-efficient homotopy cell decomposition when $\TC_s(Y)=s\cone(Y)$. The adjective ``$\TC_s$-efficient'' is motivated by the following standard fact:}

\begin{lema}\label{decomposicionminimal}
\rojo{For a path connected space $X$, $\TC_s(X)\leq s\cone(X)$.}
\end{lema}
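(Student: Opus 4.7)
My plan is to deduce the inequality by assembling three standard upper bounds, so that the lemma becomes an immediate consequence of the classical relationship between spherical cone length and Lusternik--Schnirelmann category.

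First, I would replace $d_s$ by its fibrational substitute $e_s\colon X^{J_s}\to X^s$, and recall the general sectional-category bound: for any fibration $p\colon E\to B$ that admits a section over each contractible subset of $B$, one has $\operatorname{secat}(p)\leq\operatorname{cat}(B)$ (see, e.g., \cite[Theorem~5]{Schwarz66}). Applied to $e_s$ (which admits global sections over contractible subsets because $X$ is path connected) this gives
$$
\TC_s(X)=\operatorname{secat}(e_s)\leq\operatorname{cat}(X^s).
$$

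Next, I would invoke the classical product inequality $\operatorname{cat}(X^s)\leq s\cdot\operatorname{cat}(X)$, and finally the well-known comparison $\operatorname{cat}(X)\leq\operatorname{cl}(X)$ between LS-category and (spherical) cone length. The latter is the content of the standard inductive argument: given a homotopy cell decomposition $(Y_0,\ldots,Y_c)$ of $X$ with $c=\operatorname{cl}(X)$, the $i$-th layer $Y_i$ is covered by $i+1$ open sets each of which is contractible in $Y_i$ (and hence in $X$), because adjoining the cone on $W_{i-1}$ to $Y_{i-1}$ produces exactly one additional categorical open set. Chaining these three bounds yields
$$
\TC_s(X)\leq\operatorname{cat}(X^s)\leq s\cdot\operatorname{cat}(X)\leq s\cdot\operatorname{cl}(X),
$$
which is the desired conclusion.

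I do not anticipate a genuine obstacle: each of the three bounds is standard and well-documented, and no feature of the specific homotopy cell decomposition enters the argument beyond what is already built into $\operatorname{cl}(X)$. An alternative, more constructive route would be to build sections of $e_s$ directly over an open cover of $X^s$ induced by the filtration $Y_0\subseteq\cdots\subseteq Y_c$ (using that each $Y_{i+1}-Y_i$ deformation retracts inside $Y_{i+1}$ onto $Y_i$), but the indirect proof via $\operatorname{cat}$ is considerably cleaner and avoids any bookkeeping with the attaching maps $\pi_i\colon W_i\to Y_i$.
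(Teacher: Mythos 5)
Your proof is correct, but it takes a slightly different route from the paper. You chain three inequalities through LS-category: $\TC_s(X)=\operatorname{secat}(e_s)\leq\cat(X^s)\leq s\cdot\cat(X)\leq s\cdot\cone(X)$. The paper instead argues directly: it forms the product homotopy cell decomposition on $X^s$, which has length $s\cdot\cone(X)$, and then appeals in a single step to the fact that the sectional category of a fibration is bounded above by the spherical cone length of its base, giving $\TC_s(X)\leq\cone(X^s)\leq s\cdot\cone(X)$. The two chains differ in where they pass through $\cat$: your version uses the classical $\operatorname{secat}(p)\leq\cat(B)$ together with subadditivity of $\cat$ under products and $\cat\leq\cone$, whereas the paper never invokes $\cat$ and relies on the (perhaps less universally cited, though standard) secat-vs.-cone-length bound plus subadditivity of cone length under products. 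Your route has the advantage that every link is among the most familiar inequalities in the subject; the paper's route is shorter and sets up the product decomposition of $X^s$ that the surrounding section actually reuses. One small caveat worth being explicit about in your version: the product inequality $\cat(X^s)\leq s\cdot\cat(X)$ requires mild point-set hypotheses (normality/ANR, etc.), but this is harmless here---if $\cone(X)=\infty$ there is nothing to prove, and if $\cone(X)<\infty$ then $X$ has the homotopy type of a finite CW complex.
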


\rojo{The proof of Lemma~\ref{decomposicionminimal} given below makes use of products of homotopy cell decompositions, which is a standard construction in view of the finiteness condition on the number of cells in a given strata. For instance, the product of two homotopy cell decompositions $(Y_0,\ldots,Y_c)$ and $(Z_0,\ldots,Z_d)$, of $Y$ and $Z$ respectively, is the homotopy cell decomposition of $Y\times Z$ given by the sequence $(P_0,\ldots,P_{c+d})$ with $P_i=\bigcup_{j+k=i} Y_j\times Z_k$ and where we take the usual (Cartesian product) attaching maps.}

\begin{proof}[Proof of Lemma~$\ref{decomposicionminimal}$]
\rojo{Let $(X_0,\ldots,X_c)$ be a minimal homotopy cell decomposition of $X$. The product decomposition on $X^s$ has length $sc$, so the results follows from the fact that the sectional category of a fibration is bounded from above by the spherical cone length of its base.}
\end{proof}

\rojo{Known examples of spaces admitting a $\TC_s$-efficient homotopy cell decomposition are:}
\begin{enumerate}
\item \rojo{Wedge sums of spheres (with the single exception of a wedge with a single summand given by an odd dimensional sphere).}
\item \rojo{Simply connected closed symplectic manifolds admitting a cell structure with no odd dimensional cells.}
\item \rojo{Configuration spaces on odd dimensional Euclidean spaces.}
\end{enumerate}
\rojo{All such examples satisfy, in addition, the equality $\TC_s=\zcl_s$, a condition that will be part of Theorem~\ref{pps} below. In particular, our result implies that the list of examples above can be extended to polyhedral product spaces constructed from the three types of spaces already listed.}

\begin{definition}
\rojo{For an $n$ tuple $\azz{\gamma=(c_1,\ldots,c_n)}$ of nonnegative integers, we define the \azz{\emph{$\gamma$-weighted} dimension} of an abstract simplicial complex $\mathcal{K}$ with vertices $[n]$ as}
$$
\rojo{\azz{\dim_{\gamma}}(\mathcal{K})=\max\left\{ c_{i_1}+\cdots+c_{i_\ell} \,\, | \,\, 1\leq i_1<\cdots<i_\ell\leq n\mbox{ \hspace{.5mm}and \hspace{.3mm}} \{i_1,\ldots,i_\ell\}\in\mathcal{K}\right\}-1.}
$$
\end{definition}

\rojo{Theorem~\ref{TC_s(X) even} is generalized by:}

\begin{theo}\label{pps}
\rojo{Let $X=Z(\{(X_i,\star)\},\mathcal{K})\subseteq\prod_{i=1}^nX_i$ be the polyhedral product space associated to a family of pointed spaces $X_1,\ldots,X_n$, and an abstract simplicial complex $\mathcal{K}$ with vertices $[n]$. Assume that, for \azz{each} $i\in[n]$,}
\begin{itemize}
\item \rojo{$\TC_s(X_i)=\zcl(H^*(X_i;\mathbb{Q}))$, and}
\item \rojo{$X_i$ admits a $\TC_s$-efficient $($and necessarily minimal, in view of Lemma~$\ref{decomposicionminimal})$ homotopy cell decomposition.}
\end{itemize}
\rojo{Then $X$ also satisfies the two hypothesis above and, in addition, $\,\TC_s(X)=s(1+\azz{\dim_{\gamma}}(\mathcal{K}))\,$ where $\azz{\gamma=(\cone(X_1),\ldots,\cone(X_n))}$.}
\end{theo}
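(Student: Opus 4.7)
The plan is to sandwich $\TC_s(X)$ between $s(1+\dim_{\gamma}(\mathcal{K}))$ from above and below, and to then deduce both persistence-of-hypotheses claims for $X$ from the resulting chain of equalities.

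For the upper bound, I would fix minimal homotopy cell decompositions $\{\star\}=X_i^{(0)}\subseteq\cdots\subseteq X_i^{(c_i)}\simeq X_i$ with $c_i=\cone(X_i)$, which exist by hypothesis, and filter $X$ by
$$
X^{(r)} \;:=\; \bigcup_{\sigma\in\mathcal{K}}\;\bigcup_{\sum_{i\in\sigma}k_i\leq r,\;0\leq k_i\leq c_i}\;\prod_{i\in\sigma}X_i^{(k_i)}\times\prod_{i\notin\sigma}\{\star\}.
$$
The $\gamma$-weighted dimension is defined precisely so that every maximal simplex $\sigma\in\mathcal{K}$ satisfies $\sum_{i\in\sigma}c_i\leq 1+\dim_\gamma(\mathcal{K})$, which forces $X^{(1+\dim_\gamma(\mathcal{K}))}=X$. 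The step from $X^{(r-1)}$ to $X^{(r)}$ is then a cofibration whose attaching wedge of spheres is assembled from smash products of the individual attaching maps $\pi_{k_i-1}\colon W^{(i)}_{k_i-1}\to X_i^{(k_i-1)}$, indexed by those pairs $(\sigma,(k_i)_{i\in\sigma})$ with $\sum_{i\in\sigma}k_i=r$. Applying Lemma~\ref{decomposicionminimal} then gives $\TC_s(X)\leq s\cone(X)\leq s(1+\dim_\gamma(\mathcal{K}))$.

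For the lower bound, note that for every $\sigma\in\mathcal{K}$ the Cartesian product $\prod_{i\in\sigma}X_i$ is a retract of $X$ via the composition $X\hookrightarrow\prod_{i=1}^n X_i\twoheadrightarrow\prod_{i\in\sigma}X_i$, whose section sends $(y_i)_{i\in\sigma}$ to the $n$-tuple with $\star$'s outside $\sigma$ (this lands in $X$ because $\sigma\in\mathcal{K}$). Choosing $\sigma$ achieving $\sum_{i\in\sigma}c_i=1+\dim_\gamma(\mathcal{K})$ and chaining the retract inequality applied to $\zcl_s$, the additivity in~(\ref{laestimacion}), and the standing hypotheses $\zcl_s(H^*(X_i;\mathbb{Q}))=\TC_s(X_i)=sc_i$, gives
$$
\zcl_s(H^*(X;\mathbb{Q}))\;\geq\;\zcl_s\Big(H^*\Big(\prod_{i\in\sigma}X_i;\mathbb{Q}\Big)\Big)\;\geq\;\sum_{i\in\sigma}\zcl_s(H^*(X_i;\mathbb{Q}))\;=\;s(1+\dim_\gamma(\mathcal{K})).
$$
Combined with Proposition~\ref{ulbTCn} and the upper bound, the chain $s(1+\dim_\gamma(\mathcal{K}))\leq\zcl_s(H^*(X;\mathbb{Q}))\leq\TC_s(X)\leq s\cone(X)\leq s(1+\dim_\gamma(\mathcal{K}))$ collapses to equalities, which simultaneously proves the numerical formula, the equality $\TC_s(X)=\zcl_s(H^*(X;\mathbb{Q}))$, and the $\TC_s$-efficiency of the explicit decomposition $\{X^{(r)}\}$.

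The main obstacle is the careful verification that the filtration $\{X^{(r)}\}$ is genuinely a homotopy cell decomposition in the strict sense of the paper, i.e.\ that each inclusion $X^{(r-1)}\hookrightarrow X^{(r)}$ arises from attaching the cone of a single map from a finite wedge of spheres. The delicate point is that the subspaces corresponding to distinct simplices $\sigma,\sigma'\in\mathcal{K}$ overlap non-trivially under the polyhedral-product gluing, so the attaching data must be assembled coherently across simplices; the argument is essentially a polyhedral-product analogue of the smash-product description of the cofibration steps in a product of homotopy cell decompositions. All remaining steps reduce to routine applications of previously recorded results.
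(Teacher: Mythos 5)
Your proof takes essentially the same route as the paper. Your filtration $X^{(r)}$ is precisely the paper's $P'_r:=P_r\cap X$, the intersection with $X$ of the product homotopy cell decomposition $(P_0,\ldots,P_c)$ on $\prod_i X_i$; the upper bound is then Lemma~\ref{decomposicionminimal} applied to this restricted decomposition, just as in the paper. Your lower bound via the retraction $X\twoheadrightarrow\prod_{i\in\sigma}X_i$ (for a $\gamma$-weight-maximizing $\sigma\in\mathcal{K}$), combined with the product estimate for $\zcl_s$ and the hypotheses $\zcl_s(H^*(X_i;\mathbb{Q}))=\TC_s(X_i)=s\hspace{.3mm}\cone(X_i)$, is also the paper's argument, only with the chain of inequalities organized through $\zcl_s(H^*(X;\mathbb{Q}))$ directly rather than through $\TC_s(X_{i_1}\times\cdots\times X_{i_\ell})$ first --- a cosmetic difference that is arguably slightly cleaner for extracting the $\TC_s=\zcl_s$ persistence claim. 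The one point you flag as "the main obstacle" --- verifying that $\{X^{(r)}\}$ is a genuine homotopy cell decomposition, with each step a cone on a map from a finite wedge of spheres --- is in fact asserted rather than proved in the paper ("Note that $(P'_0,\ldots,P'_c)$ is a homotopy cell decomposition of $X$..."), so your caution is warranted but does not represent a deficit relative to the published argument.
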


\begin{proof}[Proof of Theorem~$\ref{pps}$]
\rojo{For $i\in[n]$ let $(X_{0,i},X_{1,i},\ldots,X_{c_i,i})$ be a $\TC_s$-efficient (and necessarily minimal, in view of Lemma~\ref{decomposicionminimal}) homotopy cell decomposition of $X_i$. By the homotopy invariance of the polyhedral product functor, we can assume that $X_i=X_{c_i,i}$ for all $i\in[n]$. Let $(P_0,\ldots,P_c)$ be the product homotopy cell decomposition on $\prod_iX_i$ where $c=\sum_ic_i$, and let $P'_i=P_i\cap X$ for $i\in[c]_0$. Note that $(P'_0,\ldots,P'_c)$ is a homotopy cell decomposition of $X$ for which $$P'_{1+\azz{\dim_{\gamma}}(\mathcal{K})}=P'_{2+\azz{\dim_{\gamma}}(\mathcal{K})}=\cdots=P'_c,$$ so $\TC_s(X)\leq s(1+\azz{\dim_{\gamma}}(\mathcal{K}))$ in view of Lemma~\ref{decomposicionminimal}. To see that this is an equality (so that $(P'_0\ldots,P'_{1+\azz{\dim_{\gamma}}(\mathcal{K})})$ is $\TC_s$-efficient), choose $1\leq i_1<\cdots<i_\ell\leq n$ with $\{i_1,\ldots,i_\ell\}\in\mathcal{K}$ and $c_{i_1}+\cdots+c_{i_\ell}=1+\azz{\dim_{\gamma}}(\mathcal{K})$, and note that
\begin{eqnarray*}
\TC_s(X) & \geq &\TC_s(X_{i_1}\times\cdots\times X_{i_\ell}) \\
&\geq& \zcl_s(H^*(X_{i_1}\times\cdots\times X_{i_\ell};\mathbb{Q})) \\
&\geq& \sum_{j=1}^\ell\zcl_s(H^*(X_{i_j};\mathbb{Q})) \\
&=& s\sum_{j=1}^\ell c_{i_j} \\
&=& s(1+\azz{\dim_{\gamma}}(\mathcal{K})).
\end{eqnarray*}
The second and third inequalities hold by Proposition~\ref{ulbTCn} and~\cite[Lemma~2.1]{CoFa11}, respectively, whereas the first inequality holds since, as explained in the first paragraph of the proof of Proposition~4 in~\cite{FelixTanre09}, $X_{i_1}\times\cdots\times X_{i_\ell}$ is (homeomorphic to) a retract of $X$. To complete the proof, note that, as above, $\zcl_s(H^*(X;\mathbb{Q}))$ is bounded from above by $\TC_s(X)$ and from below by $\zcl_s(H^*(X_{i_1}\times\cdots\times X_{i_\ell};\mathbb{Q}))$---and that the last two numbers agree.} 
\end{proof}

\begin{remark}\label{downtocat}{\em
\colorao{The methods of this section can be applied to describe the category of suitably efficient polyhedral products. For instance, without any restriction on the parity of the sphere dimensions $k_i$, any subcomplex $X$ of $\mathbb{S}(k_1,\ldots,k_n)$ has $\cat(X^s)=s(1+\dim(\mathcal{K}_X))$. This is just an example of a partial (but very useful) generalization of~\cite[Proposition~4]{FelixTanre09}.}
}\end{remark}


\bigskip\bigskip\bigskip\sc
Departamento de Matem\'aticas

Centro de Investigaci\'on y de Estudios Avanzados del IPN

Av.~IPN 2508, Zacatenco, M\'exico City 07000, M\'exico

{\tt jesus@math.cinvestav.mx}

{\tt 
bgutierrez@math.cinvestav.mx}

\bigskip
Department of Mathematics 

University of Oregon

Eugene, OR 97403, USA

{\tt yuz@uoregon.edu}

\end{document}